\documentclass[english]{smfart}

\usepackage{hyperref}

\usepackage{setspace}
\usepackage{amsmath}
\usepackage{smfthm}
\usepackage{amsfonts}
\usepackage{amssymb}
\usepackage[utf8]{inputenc}
\usepackage[margin=2.8cm]{geometry} 
\usepackage{graphicx} 
\usepackage[T1]{fontenc}
\usepackage[utf8]{inputenc}
\usepackage{xcolor}
\usepackage[english]{babel}

\usepackage{tikz}

\theoremstyle{plain}
\newtheorem{theorem}{Theorem}
{\bfseries}{\itshape}
{\bfseries}{\itshape}
\newtheorem{proposition}[theorem]{Proposition}

\newtheorem{lemma}[theorem]{Lemma}

\theoremstyle{definition}
\newtheorem{definition}[theorem]{Definition}
\newtheorem{example}[theorem]{Example}

\theoremstyle{remark}
\newtheorem{remark}[theorem]{Remark}

\usepackage{mathrsfs}
\usepackage{enumerate}
\usepackage{tikz}

\newcommand{\C}{\mathcal{C}}
\newcommand{\A}{\mathcal{A}}

\newcommand{\E}{\mathcal{E}}

\newcommand{\Z}{\mathbb{Z}}
\newcommand{\F}{\mathbb{F}}
\newcommand{\N}{\mathbb{N}}

\newcommand{\Fq}{\mathbb{F}_{q}}
\newcommand{\LL}{\mathbb{L}}
\newcommand{\K}{\mathbb{K}}
\newcommand{\EE}{\mathbb{E}}

\DeclareMathOperator{\Gal}{Gal}
\DeclareMathOperator{\GL}{GL}

\renewcommand{\K}{\mathbb{K}}
\newcommand{\Kbar}{\overline{\mathbb{K}}}
\renewcommand{\A}{\mathbb{A}}
\renewcommand{\P}{\mathbb{P}}
\renewcommand{\C}{\mathbb{C}}
\renewcommand{\O}{\mathcal{O}}
\newcommand{\Q}{\mathbb{Q}}
\newcommand{\R}{\mathbb{R}}

\newcommand{\curve}[1]{\ensuremath{\mathscr{#1}}}
\def\E{\curve{E}}
\def\X{\curve{X}}
\def\Y{\curve{Y}}
\newcommand{\Lcal}{\curve{L}}

\newcommand{\Pic}{\text{Pic}}
\newcommand{\Div}{\text{Div}}

\renewcommand{\leq}{\leqslant} 
\renewcommand{\geq}{\geqslant}

\newcommand{\code}[1]{\ensuremath{\mathcal{#1}}}

\newcommand{\CC}{\code{C}}

\newcommand{\CL}{\CC_L}
\newcommand{\cP}{\mathcal{P}}
\newcommand{\CLdef}{\CC_L(\X, \cP, G)}

\newcommand{\word}[1]{\ensuremath{\boldsymbol{#1}}}

\newcommand{\cv}{\word{c}}

\newcommand{\ev}{\word{e}}

\newcommand{\xv}{{\word{x}}}
\newcommand{\yv}{{\word{y}}}

\renewcommand{\dh}{\ensuremath{\text{d}_{\text{H}}}}
\newcommand{\wh}{\ensuremath{\text{w}_{\text{H}}}}
\newcommand{\card}[1]{\sharp{ #1 }}

\newcommand{\map}[4]{\left\{
\begin{array}{ccc}
#1 & \longrightarrow & #2 \\ #3 & \longmapsto & #4 
\end{array}
\right.}

\newcommand{\ratmap}[4]{\left\{
\begin{array}{ccc}
#1 & \dashrightarrow & #2 \\ #3 & \longmapsto & #4 
\end{array}
\right.}

\newcommand{\eqdef}{\stackrel{\textbf{def}}{=}}
\renewcommand{\span}{\text{Span}}
\newcommand{\SL}{\mathbf{SL}_2(\Z)}
\newcommand{\SLl}{\mathbf{SL}_2(\F_{\ell})}
\renewcommand{\GL}{\mathbf{GL}}
\newcommand{\HP}{\mathbb{H}}
\newcommand{\ie}{{\em i.e.}}

\theoremstyle{definition}
\newtheorem{exercise}[theorem]{Exercise} 

\title{Codes and modular curves}
\author{Alain Couvreur}
\address{Inria, France}
\email{alain.couvreur@inria.fr}

\begin{document}

\begin{abstract}
  These lecture notes have been written for a course at the
  {\em Algebraic Coding Theory} (ACT) summer school 2022 that took place in
  the university of Zurich.
The objective of the course propose an in--depth presentation of the proof of
one of the most striking results of coding theory: Tsfasman
Vl\u{a}du\c{t} Zink Theorem, which asserts that for some prime power
$q$, there exist sequences of codes over $\Fq$ whose asymptotic
parameters beat random codes.
\end{abstract}

\maketitle

\tableofcontents

\section*{Introduction}
Algebraic Geometry (AG) codes is a particularly exciting topic lying
at the intersection between number theory, algebraic geometry and
coding theory.  The birth of this research area dates back to the
early 80's with the introduction by Goppa \cite{goppa1981dansssr} of a
new family of codes obtained by evaluating residues of some
differential forms on a given curve. Quickly after, Tsfasman,
Vl\u{a}du\c{t}, Zink \cite{tsfasman1982mn} and independently Ihara
\cite{ihara1981jfsuTokyo} proved the existence of sequences of modular
curves and Shimura curves having an excellent asymptotic ratio number
of points {\em v.s.} genus. An immediate but extremely striking
corollary is the existence of sequences of codes beating the Gilbert
Varshamov bound, in short: codes better than random codes.  This
remarkable and totally unexpected result turned out to be the first
stone of the development of a whole theory: that of AG codes.
Surprisingly, a very comparable breakthrough happened in graph theory
the late 80's.  Indeed, in 1988, Lubotsky, Philips and Sarnak
\cite{LPS88} and independently Margulis \cite{margulis1988} used
Cayley graphs on quotients of $\SL$ to prove the existence of a family
of graphs whose girth, {\em i.e.} the length of their shortest cycle,
exceeds the girth obtained with the probabilistic method. In both
situations, coding theory and graph theory, the use of elegant
algebraic structures unexpectedly beat random constructions.

The objective of this lecture is to present in an (almost)
self-contained presentation, the beginning of this wonderful story:
the original proof of Tsfasman, Vl\u{a}du\c{t} and Zink Theorem.  It
should be mentioned that in 1995, Garcia and Stichtenoth \cite{GS1995}
proposed another and somehow more explicit approach to design sequence
of curves (actually function fields but the two objects are
equivalent) reaching the so-called Drinfeld--Vl\u{a}du\c{t}
\cite{vladut1983faa}. It could be considered as strange to present the
original proof which turns out to be much more complicated than Garcia
and Stichtenoth's one but there are some reasonable motivations for
that:
\begin{itemize}
\item[\textbullet] Tsfasman, Vl\u{a}du\c{t} and Zink's proof testifies from the
  richness of the theory of algebraic geometry codes, with a proof
  involving deep results from algebraic geometry and number theory.
\item[\textbullet] This original proof is frequently cited while few references give a
  complete presentation of it and (in my personal opinion), none of the
  papers of Tsfasman {\em et. al.} and Ihara provide an enough detailed proof.
  In both articles, the proof is made of less than ten lines hiding a huge
  amount of prerequisites.
\item[\textbullet] Finally, I wished to give that lecture, because this proof is
  beautiful and elegant and even if I am not among the mathematicians
  who do maths {\em pour la beaut\'e de la chose}\footnote{Litterally
    : ``for the beauty of the thing''} it is sometimes pleasant to take the time
  to appreciate the elegance of some development.
\end{itemize}

\subsection*{Outline of these notes}
We start in Section~\ref{sec:codes} with bases on linear codes and
their asymptotic behaviour. Section~\ref{sec:curves} gives an
introduction to algebraic curves by providing the necessary material
in algebraic geometry.  Section~\ref{sec:ag_codes} introduces
algebraic geometry codes and states the main result:
Tsfasman--Vl\u{a}du\c{t}--Zink Theorem. The remainder of the notes are
dedicated to the proof of this statement. Sections~\ref{sec:elliptic}
and~\ref{sec:modular} provide further material on elliptic and modular
curves respectively. Section~\ref{sec:proof_of_main} concludes the
proof.

\subsection*{Acknowledgements}
First, I would like to thank Gianira Alfarano, Karan Khaturia,
Alessandro Neri, Violetta Weger, the organisers of the {\em Algebraic
  Coding Theory Summer
  School}\footnote{\url{https://math.uzh.ch/act/}} 2022 who gave me
the motivation to type-write old hand-written notes. I would probably
never have found the time to do it if they did not ask me for. Several
colleagues spent time to carefully read these notes. In particular, I
express a deep gratitude to Elena Berardini, Maxime Bombar, Grégoire
Lecerf, Jade Nardi, Christophe Ritzenthaler, Joachim Rosenthal and
Gilles Z\'emor for their relevant comments on the preliminary version
of the notes.

The author is funded by the french {\em Agence nationale de la recherche}
for the collaborative project ANR-21-CE39-0009-BARRACUDA.

 \section{Linear Codes}\label{sec:codes}

\subsection{Context}
In the sequel we are interested in {\em linear $q$--ary codes}, which
are linear subspaces of $\Fq^n$. What makes the study hard, but also
deeply interesting is that we are not only considering elementary
objects such as finite dimensional vector spaces but
spaces endowed with a {\bf metric}: the {\em Hamming metric}. The
Hamming distance between two vectors $\xv, \yv \in \Fq^n$ is denoted
by
\[
  \dh (\xv, \yv) \eqdef \card{ \left\{i \in \{1, \dots, n\} ~|~ x_i
      \neq y_i \right\}}.
\]
The {\em Hamming weight} of a vector is its Hamming distance to the
zero vector.
\[
  \forall \xv \in \Fq^n,\qquad \wh(\xv) \eqdef \dh (\xv, \mathbf{0}).
\]

\subsection{Linear codes}\label{ss:linear_codes}
Unless otherwise specified, a {\em code} will denote a linear subspace
$\CC \subseteq \Fq^n$. The vectors of $\CC$ are usually referred to as
{\em codewords}. The {\em dimension} of $\CC$ regarded as an
$\Fq$--vector space is always denoted by $k$ and its {\em minimum
  distance} denoted by $d$ is defined as
\[
  d \eqdef \min_{\stackrel{\xv, \yv \in \CC}{\xv \neq \yv}} \left\{\dh
    (\xv, \yv) \right\} = \min_{\cv \in \CC \setminus \{0\}} \left\{
    \wh(\cv) \right\},
\]
where the last equality is a consequence of the linearity.  The {\em
  parameters} of a code $\CC \subseteq \Fq^n$ refer to the triple
$n, k, d$ and is usually denoted as $[n,k,d]_q$, where the cardinality
$q$ of the base field is recalled in subscript.  Finally, one can also
be interested in the {\em rate} and {\em relative distance} of a code,
respectively defined and denoted as follows:
\[
  R \eqdef \frac k n \qquad \text{and}\qquad \delta \eqdef \frac d n\cdot
\]
A longstanding problem in coding theory is which kind of triples of
parameters $[n,k,d]$ can be achieved? A code will be considered as
``good'' if both $k$ and $d$ are as close as possible to
$n$. However, many upper bounds exist, the most elementary one being the
{\em Singleton bound} saying that for any code with parameters $[n,k,d]_q$
we have
\begin{equation}\label{eq:Singleton}
  k+d \leq n+1.
\end{equation}
The rationale behind this question is that both $k$ and $d$ quantify
some feature of linear codes.  Suppose we are given a {\em
  transmission channel}, that can be either a wire or a wireless
communication for instance an exchange between electronic devices
like between a computer and a WiFi antenna. The rate is nothing but
the ratio of information divided by the quantity of data which is
actually sent across the channel.  Hence, the rate $R = k/n$
quantifies the efficiency of encoding.

On the other hand, the minimum distance quantifies how far are words
from each other and hence the theoretical ability to recover an original
message from a corrupted codeword\footnote{Here we do not introduce
any consideration about practical algorithms to correct errors}.

Finally, suppose that our objective is to correct errors from a given
channel. Consider for instance the {\em
  $q$--ary symmetric channel with parameter $p \in [0,1 - \frac 1 q]$}
which takes as input a vector $\cv \in \Fq^n$ and outputs the vector
$\cv + \ev$ where $\ev = (e_1, \dots, e_n)$ and the $e_i$'s are
independent random variables over $\Fq$ taking value $0$ with
probability $1-p$ and any other value in $\Fq \setminus \{0\}$ with probability
$\frac{p}{q-1}$.  The average weight of our error vector satisfies
\[
  \mathbb{E}(\wh (\ev)) = pn.
\]
However, for small values of $n$, deviations may happen and it is
possible that our input vector $\cv$ is corrupted by much more than
$\lfloor pn \rfloor$ errors. Therefore, it is relevant to consider
large values of $n$ for which the law of large numbers will
assert us that the weight of the error will be close to its expectation.

This last discussion motivates the search of sequences of codes
${(\CC_s)}_{s \in \N}$ with parameters $[n_s, k_s, d_s]$ where
\[
  \lim_{s \rightarrow + \infty} n_s = +\infty
\]
and
\[
  \lim_{s \rightarrow + \infty} \frac{k_s}{n_s} = R \qquad
  \lim_{s \rightarrow + \infty} \frac{d_s}{n_s} = \delta.
\]

\begin{remark}
  Usually in the literature, the sequences ${(k_s/n_s)}_s$ and
  ${(d_s/n_s)}_s$ are not supposed to converge and $\limsup$'s
  are used instead of actual limits.
\end{remark}

In this setting, the question of the achievable pairs
$(\delta, R) \in [0,1] \times [0,1]$ remains open.
Some bounds are known:
\begin{itemize}
\item[\textbullet] Singleton bound immediately entails that $R+ \delta \leq 1$;
\item[\textbullet] A more precise bound called {\em Plotkin bound} entails
  that $R+ \delta \leq 1 - \frac 1 q$.
  See for instance \cite[Chap.~4]{couvreurLN}
\item[\textbullet] A principle that ``constructing bad codes from good ones is
  always possible'' permits to prove that give an achievable pair
  $(\delta, R)$ any pair $(\delta', R')$ with $\delta' \leq \delta$
  and $R' \leq R$ is achievable too.
  \begin{exercise}
    Prove this last assertion.
  \end{exercise}
\item[\textbullet] More precisely, it has been proved by Manin
  \cite{manin1984}, that the frontier between the subdomain of
  $[0,1] \times [0,1]$ of achievable pairs $(\delta, R)$ and the non
  achievable ones is the graph of a continuous function
  $R = \alpha_q(\delta)$. However, if proving the existence and the
  continuity of this function $\alpha_q$ is not very hard, having an
  explicit description of it remains a widely
  open problem.  An upper bound for $\alpha_q$ is given by the minimum
  of all the known upper bounds on the achievable pairs $(\delta, R)$.
\item[\textbullet] On the other hand a famous result on the average
  behaviour of random codes referred to as the Gilbert--Varshamov
  bound asserts that for a random code\footnote{This can be formalised
    as follows, consider the set of all codes of length $n$ and
    dimension $Rn$ in $\Fq^n$. This set is finite, and let $\CC$ be a
    random variable uniformly distributed over this set.}
  $\CC \subseteq \Fq^n$ with fixed rate $R$, then for any
  $\varepsilon > 0$ the probability that the relative distance
  $\delta$ of $\CC$ satisfies
  \[
    R \in [1 - H_q(\delta) - \varepsilon, 1- H_q(\delta) + \varepsilon
    ],
  \]
  goes to $1$ when $n$ goes to infinity.
  The function $H_q(\cdot)$ is the $q$--ary entropy function
  defined as
  \[
    H_q : \map{[0,1]}{\mathbb
      R}{x}{\left\{
        \begin{array}{ccc}
          -\log_q(q-1) - x \log_q(x) - (1-x)\log_q(1-x)& \text{if} &
                                                   x \neq 0,1 \\
          0 & \text{otherwise.}&
        \end{array}
      \right.
    }
  \]

  In short, the pair $(\delta, R)$ for a random sequence satisfies
  $R = 1 - H_q(\delta)$.  
\end{itemize}

In summary, the unknown function $\delta \mapsto \alpha_q(\delta)$
whose graph is the frontier of the domain of achievable pairs
$(\delta, R)$ is known to be continuous, to be bounded from below by
the Gilbert--Varshamov bound $ \delta \mapsto 1 - H_q(\delta)$ and
bounded from above by the min of all known upper bounds.  For a long
time, it has been supposed that Gilbert Varshamov bound was optimal
and that somehow, no family of codes could asymptotically beat random
codes. A breakthrough is due to Tsfasman, Vl\u{a}du\c{t} and Zink
\cite{tsfasman1982mn} who showed that the asymptotic Gilbert
Varshamov bound is not always optimal.  More precisely, they proved
the following statement.

\begin{theorem}
  Let $q = p^2$ where $p$ is a prime number. Then for any
  $R \in [0, 1]$, there exists a sequence of codes whose length goes to
  infinity and whose asymptotic parameters $(\delta, R)$ satisfy
  \[
    R+\delta \geq 1 - \frac{1}{p-1}\cdot
  \]
\end{theorem}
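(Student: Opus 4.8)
The plan is to combine the theory of algebraic geometry codes with the existence of good sequences of curves over $\Fq$ with $q = p^2$. The Goppa construction (to be introduced in Section~\ref{sec:ag_codes}) associates to a smooth projective curve $\X/\Fq$ of genus $g$, a set $\cP = \{P_1, \dots, P_n\}$ of $n$ distinct rational points, and a divisor $G$ with $\deg G < n$ and $\text{supp}(G) \cap \cP = \emptyset$, a code $\CL(\X, \cP, G)$ of length $n$, dimension $k \geq \deg G + 1 - g$ (by Riemann--Roch), and minimum distance $d \geq n - \deg G$. Adding these two inequalities kills the $\deg G$ term and yields
\[
  k + d \geq n + 1 - g, \qquad \text{hence} \qquad R + \delta \geq 1 - \frac{g-1}{n} \geq 1 - \frac{g}{n}.
\]
So the whole problem reduces to producing, for infinitely many $n$, curves over $\Fq$ with many rational points relative to their genus: precisely, one wants sequences $(\X_s)$ with $g(\X_s) \to \infty$ and $n_s / g_s \to A$ for $A$ as large as possible, since then $R + \delta \geq 1 - 1/A$ asymptotically.

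First I would establish the interpolation of the rate: given a target $R \in [0,1]$ and a curve $\X_s$ with $N_s$ rational points and genus $g_s$, I take $\cP$ to consist of $n_s = N_s - 1$ of those points (reserving one rational point $Q$ to support $G$) and set $G = m_s Q$ with $m_s = \lfloor R n_s + g_s \rfloor$ or a nearby value, chosen so that $\deg G < n_s$ is respected; then $k_s / n_s \to R$ and $d_s/n_s \geq 1 - m_s/n_s \to 1 - R$ up to the genus correction, and the standard ``constructing bad codes from good ones'' principle (the exercise above) lets me shorten or puncture to hit exactly the rate $R$ in the limit. This is routine bookkeeping once the curves are in hand.

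The real content, and the main obstacle, is the construction of the curve sequence achieving $n_s / g_s \to p - 1$. This is exactly the Drinfeld--Vl\u{a}du\c{t} bound territory: over $\Fq$ with $q = p^2$, modular curves (or Shimura curves) have the property that the number of supersingular points, or more generally the number of $\Fq$-rational points, grows like $(p-1)(g-1)$. Concretely, I would take the classical modular curves $X_0(\ell)$ (or $X(\ell)$) reduced modulo a prime $p \nmid \ell$; their genus is computed from the index of the congruence subgroup in $\SL$ (the Riemann--Hurwitz formula applied to the covering $X_0(\ell) \to X(1)$), giving $g_\ell \sim \ell \cdot (\text{const})$, while the Eichler--Shimura / Deuring theory shows that the supersingular points of $X_0(\ell)$ in characteristic $p$ are all $\Fq$-rational (they correspond to supersingular elliptic curves, whose Frobenius satisfies $\varphi^2 = -p$ so $j$-invariants lie in $\F_{p^2}$), and their number is $\sim (p-1) g_\ell / 12 \cdot 12 = (p-1)(g_\ell - 1)$ after the precise mass-formula count. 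Making all of this rigorous — controlling the genus via the ramification of the modular covering, identifying the reduction of the modular curve with a modular curve over $\overline{\F}_p$ with good reduction properties, and counting the supersingular locus via the Eichler mass formula — is the technically heavy part, and is precisely what Sections~\ref{sec:elliptic} and~\ref{sec:modular} are devoted to; Section~\ref{sec:proof_of_main} then assembles the code-theoretic packaging described above.

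Putting the pieces together: with $n_s/g_s \to p-1$ one gets $R + \delta \geq 1 - 1/(p-1)$ in the limit for the specific rate produced by the choice of $m_s$, and then varying $m_s$ linearly in $n_s$ (together with the shortening/puncturing principle) sweeps out every $R \in [0,1]$, which is the claimed statement.
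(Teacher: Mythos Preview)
Your proposal is correct and follows essentially the same route as the paper: reduce via the Riemann--Roch bound $k+d \geq n+1-g$ to finding curves with $N/g \to p-1$, then use the modular curves $X_0(\ell)$ over $\F_{p^2}$, computing their genus by Riemann--Hurwitz over $X_0(1)$ and lower-bounding their rational points by the supersingular locus (whose size is governed by the count of supersingular $j$-invariants, each lying in $\F_{p^2}$). The only cosmetic differences are that the paper uses the elementary supersingular count from Silverman rather than invoking the Eichler mass formula by name, and it handles the support-of-$G$ issue by citing the existence of divisors avoiding all rational points rather than reserving a point; your garbled intermediate formula ``$(p-1)g_\ell/12\cdot 12$'' and the claim ``$\varphi^2=-p$'' are imprecise but inessential to the argument.
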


\begin{remark}
  Actually, the result holds for any $q = p^{2m}$ where $p$ is prime
  and $m\geq 1$.
\end{remark}

\begin{remark}
  Actually, the result on codes is the corollary of a statement on the
  existence of a sequence of algebraic curves with specific properties
  (see further Theorem~\ref{thm:main}). This statement on curves has
  proved by Tsfasman, Vl\u{a}du\c{t} and Zink in \cite{tsfasman1982mn}
  and independently by Ihara in \cite{ihara1981jfsuTokyo}.  However,
  Ihara did not rely this result with coding theory while Tsfasman {\em
    et. al.} did.
\end{remark}

\begin{figure}[!h]
  \centering
  \includegraphics[scale = 1]{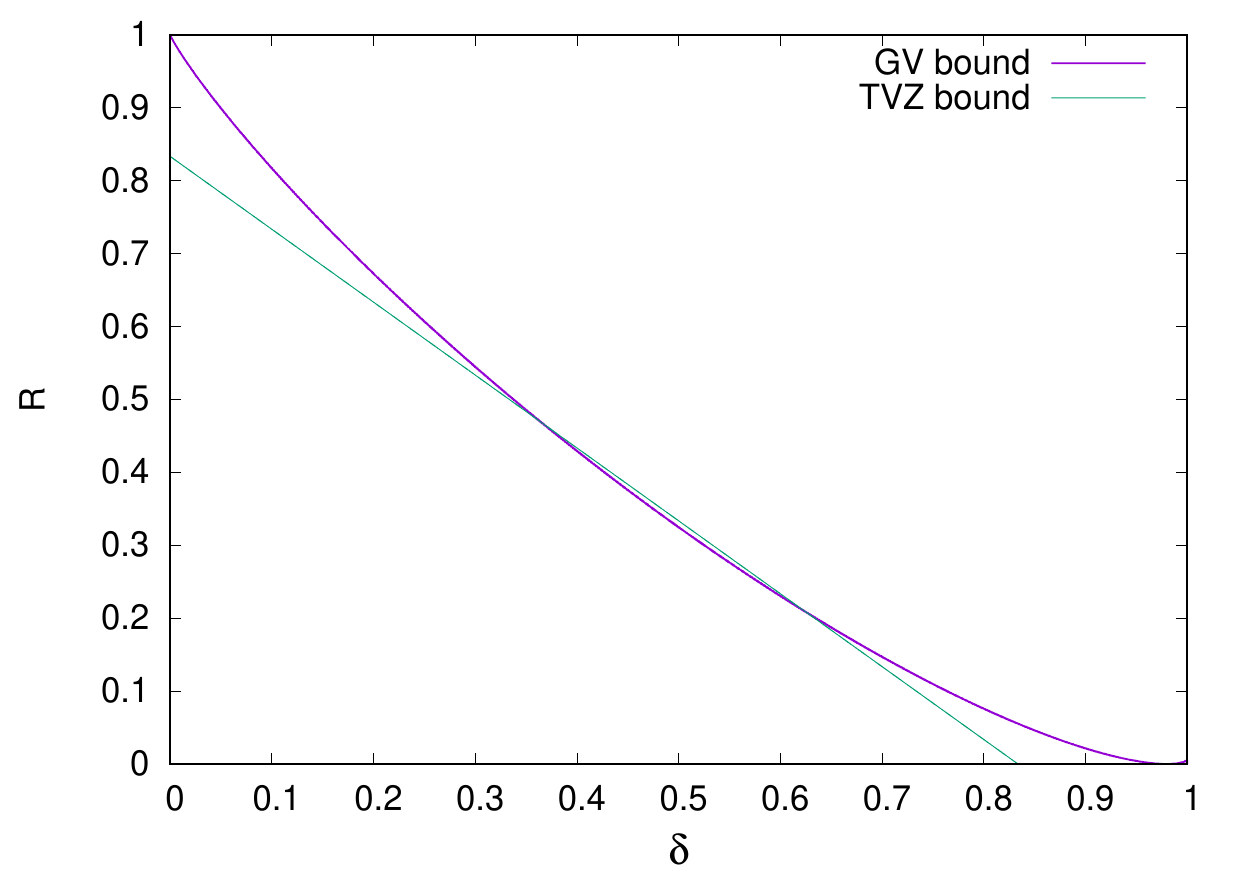}
  \caption{The TVZ bound for $q = 49$}
  \label{fig:TVZ7}
\end{figure}

\begin{figure}[!h]
  \centering
  \includegraphics[scale = 1]{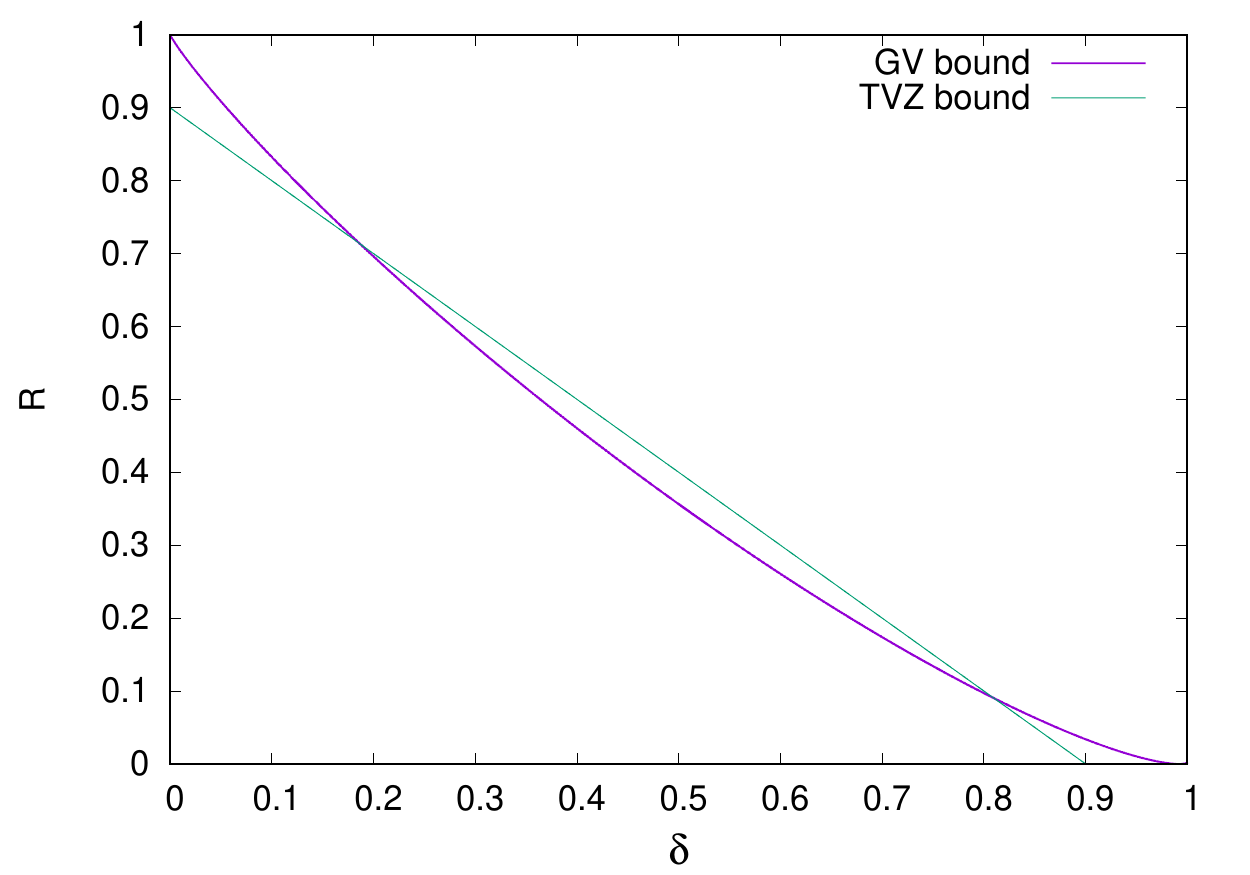}
  \caption{The TVZ bound for $q = 121$}
  \label{fig:TVZ11}
\end{figure}

It turns out that, as illustrated by Figures~\ref{fig:TVZ7}
and~\ref{fig:TVZ11}, for $q \geq 49$, such codes beat Gilbert
Varshamov bound. These codes, are actually far from being random and
are constructed using elegant techniques from number theory and
algebraic geometry.  The objective of these notes is to outline a
proof of this incredible result, which is probably one of the major
breakthroughs of coding theory.

 \section{Algebraic curves}\label{sec:curves}
The objective of this section is {\bf not} to provide an in depth
lecture of algebraic geometry but only to give the minimal
prerequisites in algebraic geometry to understand the sequel of these
notes.  In particular, here most of the proofs will be omitted. I
encourage any reader who feels comfortable with algebraic geometry and
for whom reading Harsthorne's book \cite{hartshorne1977book} is not
harder than reading Harry Potter to {\bf skip} this section for two
reasons:
\begin{itemize}
\item[\textbullet] she/he will not learn anything in it;
\item[\textbullet] for a reader who feels comfortable with the
  language of schemes, the contents of this section could
  appear to be dirty.
\end{itemize}
If you wish further details on algebraic geometry, I can encourage
the following readings depending from your knowledge on the topic:
\begin{itemize}
\item[\textbullet] Walker's book \cite{walker2000book} is an excellent first
  reading if you do not know anything about algebraic geometry and
  algebraic geometry codes.
\item[\textbullet] If you do {\bf not} like geometry, Stichtenoth's
  book \cite{stichtenoth2009book} proposes an excellent introduction
  to algebraic geometry codes from a purely arithmetic point of
  view. It provides in particular a different proof of the
  Tsfasman--Vl\u{a}du\c{t}--Zink theorem  based on so--called
  {\em recursive towers of function fields} which excludes
  any geometric consideration.
\item[\textbullet] A more advanced presentation on algebraic geometry
  codes appears in Tsfasman Vl\u{a}du\c{t} and Nogin's book
  \cite{tsfasman2007book} and Stepanov \cite{stepanov1999} .
\item[\textbullet] Finally, the reader interested in discovering
  algebraic geometry out of the context of algebraic coding theory is
  encouraged to look (for instance) at the books
  \cite{fulton1989book,shafarevich1994book}.  Lorenzini's book
  \cite{lorenzini1996book} can be an excellent reading either if you
  wish a better focus on the arithmetic side.
\end{itemize}

\subsection{Plane curves and functions}
Let $\K$ be a perfect field\footnote{In the sequel the fields of
  interest will be either $\C$ or finite fields $\Fq$.} and $\Kbar$
be its algebraic closure.  We denote by $\A^2(\Kbar)$ and
$\P^2(\Kbar)$ respectively the affine and projective planes over
$\Kbar$.  An affine {\em plane curve} $\X$ over $\K$ is the vanishing
locus in $\A^2(\Kbar)$ of a nonzero two variables polynomial
$f(x,y) \in \K [x,y]$. Similarly, a {\em projective plane curve} is
the vanishing locus in $\P^2(\Kbar)$ of a nonzero homogeneous
polynomial $F(X,Y,Z) \in \K[X,Y,Z]$. Recall that the projective plane
$\P^2(\Kbar)$ is the set of vectorial lines of $\Kbar^3$ or
equivalently is the quotient set
\[
  \P^2(\Kbar) \eqdef (\Kbar^3 \setminus \{0\})/\Kbar^\times, 
\]
and its elements are represented as triples $(u:v:w)$ with the equivalence
relation $(u:v:w) \sim (a:b:c)$ if there exists $\lambda \in \Kbar^\times$
such that $u = \lambda a$, $v = \lambda b$ and $w = \lambda c$.

Such an affine (resp. projective) curve is said to be {\em
  irreducible} if $f$ (resp. $F$) is an irreducible polynomial in
$\K [x,y]$ (resp. $\K [X,Y,Z]$) and {\em absolutely irreducible} if
$f$ (resp. $F$) is irreducible when regarded as an element of
$\Kbar[x,y]$ (resp. $\Kbar[X,Y,Z]$).

\begin{example}
  Suppose $\K = \Q$ and consider the affine curve $\X$ with equation
  $x^2 - 2y^2=0$. This curve is irreducible but {\bf not} absolutely
  irreducible. Indeed, over $\overline{\Q}$, the equation of the curve
  factorizes as $(x - \sqrt{2}y)(x + \sqrt{2}y) = 0$ and this
  factorisation is not defined over $\Q$: the polynomial $x^2 - 2 y^2$
  is irreducible over $\Q$ but not over $\overline{\Q}$.
  Geometrically speaking, $\X$ is the union of the two lines with
  respective equations $x - \sqrt{2} y = 0$ and $x + \sqrt{2} y = 0$.
  These lines are not defined over $\Q$ but their union is.
\end{example}

Given an affine irreducible plane
curve $\X$, the quotient ring $\K [x, y]/(f)$ is integral and its
field of fractions $\text{Frac}(K[x,y]/(f))$ is well--defined and
referred to as the {\em function field} of $\X$. In the projective
setting, the function field can also be defined as the field of
fractions $\frac{A(X,Y,Z)}{B(X,Y,Z)}$ where $A,B$ are homogeneous
polynomials of {\bf the same degree} with $B$ is not divisible by $F$
and with the relation:
\[
  \frac{A(X,Y,Z)}{B(X,Y,Z)} = \frac{C(X,Y,Z)}{D(X,Y,Z)} \quad
  \text{if} \quad F \ \text{divides} \ (AD - BC).
\]

For an affine curve $\X$, elements of $\K[x,y]/(f)$ can be understood
as restrictions of polynomial functions to the curve $\X$. Indeed,
considering two polynomials $a(x,y), b(x,y) \in \K [x,y]$ regarded as
functions $\A^2(\Kbar) \rightarrow \Kbar$, one can consider their
restrictions to $\X$ and a well--known result usually called {\em
  Hilbert's Nullstellensatz} (see for instance
\cite[\S~1.7]{fulton1989book}) asserts that their restrictions to $\X$
are the same if and only if $f$ divides $a-b$ and hence if and only if
they are congruent modulo the ideal spanned by $f$.

In the projective setting, a homogeneous polynomial cannot be
interpreted as a function $\P^2(\Kbar) \rightarrow \Kbar$ since an
element of $\P^2(\Kbar)$ is described by a triple $(u:v:w)$ but also
by any other triple $(\lambda u : \lambda v : \lambda w)$ for any
$\lambda \in \Kbar^{\times}$. Hence, given a non constant homogeneous
polynomial $P \in \K[X,Y,Z]$ of degree $d > 0$, the evaluation cannot
make sense since
$P(\lambda u, \lambda v, \lambda w) = \lambda^d P (u,v,w)$.  Note
however that, for such a polynomial, vanishing at a point is a
well--defined notion. Moreover, the evaluation of a fraction
$P/Q$ of two homogeneous polynomials with the same degree makes sense
since
\[
  \frac{P(\lambda u, \lambda v, \lambda w)}{Q(\lambda u, \lambda v, \lambda w)}
  = \frac{\lambda^d P(u,v,w)}{\lambda^d Q(u,v,w)} =
  \frac{P(u,v,w)}{Q(u,v,w)}\cdot
\]
This is the reason why we introduce these objects as the good definition
of functions on a projective curve.

\begin{remark}\label{remon_rationality}
  Note that we are juggling with $\K$ and $\Kbar$.  Here it is crucial
  no notice that the curve is defined as a set of points with
  coordinates in $\Kbar$, while functions, should be rational
  functions with coefficients in $\K$. On one hand, the function field
  is defined over $\K$ and describes the arithmetic of the curve. On
  the other hand, when describing a curve as a set of points,
  considering only the points with coordinates in $\K$ would be too
  poor: think for instance about the case where $\K$ is a finite
  field, in this situation the set of points with coordinates in $\K$
  is finite and might actually be empty! Then, very different
  equations may provide the same set of points with coordinates in
  $\K$ while the sets of points over $\Kbar$ will be very
  different. This explains the rationale behind considering the points
  with coordinates in $\Kbar$.
\end{remark}

\begin{remark}
  Note that when speaking about {\em functions}, these objects may
  not be defined everywhere on the curve and may have some poles somewhere.
  These objects can be understood as the algebraic geometric counterpart
  of meromorphic functions in complex analysis.
\end{remark}

\begin{remark}\label{rem:same_function_field}
  It is well--known that the projective plane can be covered by affine
  planes sometimes called {\em affine charts}. Indeed one can embed
  the affine plane into $\P^2$ as:
  \[
    \map{\A^2(\Kbar)}{\P^2(\Kbar)}{(x,y)}{(x:y:1)}
    \quad \text{or}\quad
    \map{\A^2(\Kbar)}{\P^2(\Kbar)}{(x,y)}{(x:1:y)}
    \quad \text{or}\quad
    \map{\A^2(\Kbar)}{\P^2(\Kbar)}{(x,y)}{(1:x:y).}
  \]
  The images of these three embeddings cover the full projective plane.
  Hence, given a projective curve, one can consider the restriction of
  the curve on the image of one of the above embeddings and get an
  affine curve. Practically, starting with a projective curve with
  equation $F(X,Y,Z) = 0$ one can consider for instance the affine
  curve with equation $F(x,y,1)=0$ but also those with equations
  $F(x,1,y) = 0$ or $F(1,x,y) = 0$. Hence, one can deduce affine
  curves (affine charts) from a given projective curve.  On the other
  hand, starting from an affine curve $\X$ with equation $f(x,y) = 0$
  the homogeneous polynomial $F(X,Y,Z)$ of degree $\deg f$ such that
  $f(x,y) = F(x,y,1)$ (such a {\em homogeneization} is unique, details
  are left to the reader) is the equation of a
  curve sometimes referred to as the {\em projective closure} of $\X$.

  A crucial fact is that a curve and its projective closure share a common
  object : {\bf their function field remains the very same one}.
\end{remark}

\subsection{Points}
A {\em point} of $\X$ is an element $(a,b)\in \A^2 (\Kbar)$
(resp. $(u:v:w) \in \P^2 (\Kbar)$) such that $f(a,b) = 0$
(resp. $F(u,v,w) = 0$). A point is said to be {\em a rational point}
or a $\K$--point if its coordinates all lie in $\K$.  More generally,
given an extension $\mathbb L / \K$, one can define the notions of
$\mathbb L$--points of $\X$. The set of $\K$--points or
$\mathbb L$--points of $\X$ respectively denoted by $\X (\K)$ and
$\X (\mathbb L)$. One of topics of interest for us in the sequel is
the case $\K = \Fq$. In this situation, one sees easily that $\X(\Fq)$
is finite. Indeed, it is a subset of $\A^2 (\Fq)$ or $\P^2(\Fq)$ which
are both finite sets. On the other hand $\X$ has been defined as a set
of $\Kbar$--points that we sometimes call the {\em geometric points}
in the sequel, hence we can also denote it as $\X (\Kbar)$ when
we wish to emphasize that we are interested in {\em any} possible
point.

Given an affine (resp. projective) curve $\X$ defined by the equation
$f(x,y)=0$ (resp. $F(X,Y,Z)=0$) over a field $\K$, a point
$P \in \X (\Kbar)$ with coordinates $(x_P, y_P)$ (resp.
$(u_P : v_P : w_P)$) is said to be {\em singular} if
\[
  \frac{\partial f}{\partial x}(x_P, y_P) = \frac{\partial f}{\partial
    y}(x_P, y_P) = 0 \]
resp.
\[
  \frac{\partial F}{\partial X}(u_P, v_P,w_P) =
  \frac{\partial F}{\partial Y}(u_P, v_P,w_P) =
  \frac{\partial F}{\partial Z}(u_P, v_P,w_P) = 0. 
\]
A non singular point is said to be {\em regular}. A curve without
singular points is said to be {\em regular} or {\em smooth}.
On the other hand a curve having at least one singular point
is said to be {\em singular}. It can be proved that the set
of singular points of a curve is always finite.

\bigskip

 From now on, unless otherwise specified, {\bf any {\em
  curve} is smooth projective and absolutely irreducible.}

\bigskip

\subsection{Galois action on points}
Recall that, for the sake of simplicity, we restrict the
definitions to the case where the base field $\K$ is perfect. This is
not a strong restriction for the subsequent purpose where $\K$ will
always be either finite or of characteristic zero.

Given a curve $\X$ defined over $\K$, any point $P \in \X (\Kbar)$ has
coordinates $(x_P, y_P)$ (or $(u_P:v_P:w_P)$ in the projective setting).
These coordinates being in $\Kbar$ while $\X$ is defined by polynomial
equations with coefficients in $\K$, there is a natural action of
$\text{Gal}(\Kbar / \K)$ on points of $\X$.  Note that the coordinates
of $P$ are algebraic over $\K$ and hence generate a finite extension
of $\K$ usually denoted $\K(P)$. Therefore, even if
$\text{Gal} (\Kbar / \K)$ may be a complicated object (a profinite
group), $P$ is stabilized by $\text{Gal} (\Kbar /\K (P))$ and hence
the orbit of $P$ is a finite set which is nothing but the orbit of $P$
under the action of the finite group $\text{Gal}(\K (P)' / \K)$, where
$\K(P)'$ is the Galois closure of $\K(P)$ over $\K$.

\begin{definition}\label{def:closed_point}
Let $\K$ be a perfect field, a {\em closed point} of a curve $\X$
defined over $\K$ is the orbit of a geometric point
$P \in \X (\Kbar)$ under the action of the absolute Galois group
$\text{Gal}(\Kbar / \K)$.  
\end{definition}

The number of elements in such an orbit is referred to as the {\em
  degree} of the closed point. It is also the extension degree
$[\K (P) : \K]$.  A rational point is always closed
since it is fixed by any element of the absolute Galois group and
hence it equals to its own orbit under this group action.

\begin{remark}
  If you prefer the language of number theory, closed points are
  nothing but the geometric analogue of the {\em places} of the
  function field $\K (\X)$.  
\end{remark}

\begin{example}\label{ex:circle}
  Consider the case $\K = \Q$ and the affine curve $\curve C$ with equation
  $x^2 + y^2 - 1 = 0$ (a circle). The point with coordinates $(1,0)$ is a
  rational point of $\X$, \ie{} an element of $\curve{C}(\Q)$. The complex
  point $(2, \sqrt{3} i)$, (where $i^2 =- 1$), is a geometric point
  of $\curve C$, \ie{} an element of $\curve C (\C)$.
  Finally, $\{(2, i \sqrt{3}), (2, - i \sqrt{3})\}$ is a closed point
  of degree $2$ of $\curve C$.
\end{example}

\subsection{Maps between curves}\label{ss:rat_maps}
As usually in algebra, once structures have been introduced:
for instance groups, rings, modules, etc., one introduces morphisms
between these objects. In the case of curves, we are interested in two
kinds of maps referred to as {\em morphisms} and {\em rational maps}.
A {\em rational map} between two affine (resp. projective) curves
$\X, \Y$ contained in $\A^2$ (resp. $\P^2$) is a map:
\[
  \varphi : \ratmap{\X}{\Y}{(x,y)}{(\varphi_1(x,y), \varphi_2(x,y))}
\]
resp.
\[
  \psi : \ratmap{\X}{\Y}{(u:v:w)}{(\psi_1(u,v,w), \psi_2(u,v,w),
    \psi_3(u,v,w)).}
\]
where $\phi_1, \phi_2$ (resp. $\psi_1, \psi_2, \psi_3$) are elements
of $\K(\X)$ (and, in the projective setting, at least one of the three
functions $\psi_1, \psi_2, \psi_3$ is nonzero).  The dashed arrow
$\dashrightarrow$ is here to emphasize the fact that this map is not
defined at every point but only on a subset\footnote{This subset turns
  out to be dense for a suitable topology called {\em Zariski
    topology}}.  For affine curves, at any point where
$\varphi_1, \varphi_2$ have no pole, the map is defined and said to be
{\em regular}.  For projective curves, at any point $P$ where for some
$\eta \in \K(\X)^\times$, $\eta\psi_1, \eta\psi_2, \eta\psi_3$ have no
pole at $P$ and are not simultaneously vanishing, the map $\psi$ is
well--defined and said to be {\em regular} at $P$.  A rational map
between two curves $\X \dashrightarrow \Y$ is said to be {\em regular}
if it is regular at any point of $\X$.

A rational map $\varphi : \X \dashrightarrow \Y$ induces a field extension
the other way around $\K (\Y) \hookrightarrow \K (\X)$ which is
defined as follows:
\[
  h \in \K (\Y) \longmapsto h \circ \varphi \in \K (\X).
\]
The {\em degree} of $\varphi$ is defined as the degree of this field
extension.

\begin{example}\label{ex:map_cercle}
  Back to example~\ref{ex:circle}.
  The map
  \begin{equation}\label{eq:proj_cercle}
    \ratmap{\curve C}{\P^1}{(x,y)}{(x:1)}
  \end{equation}
  is a rational map.
  It is also possible to construct a rational map $\P^1 \rightarrow \curve C$
  as
  \begin{equation}\label{eq:rat_param}
    \ratmap{\P^1}{\curve C}{(u:v)}{\left( \frac{v^2 - u^2}{u^2+v^2},
        \frac{2uv}{u^2+v^2} \right).}
  \end{equation}
  Note that these two maps are not inverses to each other.
\end{example}

Finally, the following statements are well--known. Their proofs are omitted. 

\begin{proposition}\label{prop:on_rational_maps}
  Let $h : \X \rightarrow \Y$ be a rational map between two smooth
  projective absolutely irreducible curves $\X, \Y$.
  \begin{enumerate}[(i)]
  \item if $\X$ is smooth, then $h$ is regular;
  \item if $h$ is non constant, then it is surjective.
  \end{enumerate}
\end{proposition}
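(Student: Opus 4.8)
The plan is to prove the two assertions of Proposition~\ref{prop:on_rational_maps} by reducing everything to local statements about discrete valuation rings, using the correspondence between smooth projective curves and their function fields.

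\textbf{Part (i): a rational map out of a smooth curve is regular.}
Let $P\in\X$ be a point at which we must show $h$ is defined. Working in a projective chart, write $h=(\psi_1:\psi_2:\psi_3)$ with $\psi_i\in\K(\X)$. Since $\X$ is smooth at $P$, the local ring $\O_{\X,P}$ is a discrete valuation ring; let $v_P$ denote the associated valuation and $t$ a uniformizer. Set $m=\min_i v_P(\psi_i)$ (with the convention $v_P(0)=+\infty$), choose an index $i_0$ achieving the minimum, and put $\eta=t^{-m}$, or more intrinsically $\eta=\psi_{i_0}^{-1}$. Then each $\eta\psi_i$ lies in $\O_{\X,P}$, i.e.\ has no pole at $P$, and $\eta\psi_{i_0}$ is a unit, so the three functions do not vanish simultaneously at $P$. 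By the definition of regularity at a point given just above the statement, $h$ is regular at $P$. Since $P$ was arbitrary, $h$ is regular on all of $\X$. The one point needing care is the affine-chart case: one must make sure that $P$ actually lies in the chart of $\Y$ one is using, which is handled by choosing the chart of $\Y$ according to which coordinate $\psi_{i_0}$ does not vanish at $P$, i.e.\ dehomogenizing $\Y$ with respect to $i_0$.

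\textbf{Part (ii): a non-constant rational map is surjective.}
By part (i) we may assume $h:\X\to\Y$ is a morphism. The map $h$ induces a field extension $h^\ast:\K(\Y)\hookrightarrow\K(\X)$, and since $h$ is non-constant this extension is finite (this is implicit in the notion of degree defined just before the statement; non-constancy means $h^\ast$ is not simply the inclusion of constants, hence the transcendence degrees match and the extension is algebraic, and finitely generated, hence finite). Now take any closed point $Q\in\Y$, corresponding to a discrete valuation ring $\O_{\Y,Q}$ of $\K(\Y)$. Because $\K(\X)/\K(\Y)$ is a finite extension of function fields of transcendence degree one, the valuation of $\O_{\Y,Q}$ extends to at least one valuation of $\K(\X)$ (no valuation of the smaller field can become trivial on the larger field, as $\K(\X)$ is algebraic over $\K(\Y)$). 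Such an extension corresponds to a point $P\in\X$, and the inclusion of local rings $\O_{\Y,Q}\subseteq\O_{\X,P}$ compatible with $h^\ast$ forces $h(P)=Q$. Hence $h$ is surjective onto the closed points of $\Y$, and passing to geometric points over $\Kbar$ by the Galois-orbit description of closed points finishes the argument.

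The main obstacle is Part (ii): one must invoke the fact that every place of a function field of a curve extends to every finite extension of that function field (equivalently, the ``going-up'' property for integral closures of Dedekind domains, or the valuative criterion of properness for the smooth projective model). I would not reprove this — it belongs to the standard algebraic-geometry background the section is quoting — but I would state it cleanly as the one external input. A secondary, more bookkeeping-level difficulty in Part (i) is keeping track of affine versus projective charts so that ``no pole'' and ``not all zero'' are checked in a chart that genuinely contains the image point; choosing the chart adaptively at each $P$ dissolves this issue.
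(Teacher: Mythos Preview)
The paper does not prove this proposition: it is introduced with ``The following statements are well--known. Their proofs are omitted.'' So there is no argument in the paper to compare against.

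That said, your proof is correct and is essentially the standard one. Part~(i) is exactly the usual clearing--denominators trick in a DVR, phrased to match the paper's definition of regularity (existence of $\eta\in\K(\X)^\times$ with all $\eta\psi_i$ regular and not all vanishing at $P$); your observation about choosing the affine chart of $\Y$ according to the index $i_0$ is the right way to handle the bookkeeping. Part~(ii) correctly identifies the one substantive input --- that every discrete valuation of $\K(\Y)$ extends to $\K(\X)$ in a finite extension of function fields --- and you are right not to reprove it here, as it lies at the same depth as the other facts this section is importing without proof. One cosmetic suggestion: for surjectivity it is cleanest to base--change to $\Kbar$ at the outset rather than passing through closed points and Galois orbits at the end; the valuation--extension argument then directly produces a geometric preimage of any geometric point of $\Y$.
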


\subsection{Valuations}
Recall that a {\em local ring} is a ring having a unique maximal
ideal. The term {\em local} comes precisely from the fact that many
such rings may be understood as rings of functions characterized by a
local property. For instance, given an affine curve $\X$ and a
rational point $P$ with coordinates $(x_P,y_P)$, the ring $\O_{\X, P}$
defined as the subring of $\K(\X)$ of functions which are regular
(\ie{} have no pole) at $P$. Namely
\[
  \O_{\X, P} \eqdef \left\{ \frac{a(x,y)}{b(x,y)} \in \K(\X) ~\Big|~
    b(x_P, y_P) \neq 0 \right\}.
\]
One can prove that this
ring is a local one whose maximal ideal is the ideal:
\[
  \mathfrak{m}_{\X, P} \eqdef \left\{ \frac{a(x,y)}{b(x,y)} \in \K(\X)
    ~\Big|~ b(x_P, y_P) \neq 0 \ \text{and} \ a(x_P, y_P) = 0
  \right\}.
\]
When the point $P$ is smooth, the ring $\O_{\X, P}$ is known to be a
discrete valuation ring, which means that the maximal ideal
$\mathfrak m_{\X, P}$ is principal and that, given a generator $t$ of
this maximal ideal, for any nonzero element $a \in \O_{\X, P}$, there
exists a non negative integer $n$ and an element
$\varphi \in \O^\times_{\X, P}$ such that $a = \varphi t^n$.  Such a
generator $t$ of $\mathfrak m_{\X, P}$ is called a {\em local
  parameter} (or sometimes a {\em uniformising parameter}) at $P$.
Moreover, the integer $n$ does not depend on the choice of the
generator $t$ and is referred to as the {\em valuation} of $a$ at $P$
and denoted as $v_P (a)$.  Next, one can easily prove that $\K (\X)$
is nothing but the field of fractions of $\O_{\X, P}$. Then, any
function $h \in \K(\X)$ can be written as
$h = \frac{h_1}{h_2} \in \K (\X) \setminus \{0\}$, where
$h_1, h_2 \in \O_{\X, P}$ and the valuation of $h$ at $P$ will be
defined as
\[
  v_P(h) = v_P (h_1) - v_P(h_2).
\]
In summary, we introduced a map
\[
  v_P : \K(\X) \setminus \{0\} \rightarrow \Z
\]
and this map is known to satisfy the following properties,
\begin{enumerate}[(i)]
\item[\textbullet] $\forall a,b \in \K(\X)\setminus \{0\}$, $v_P(ab) = v_P(a)+v_P(b)$;
\item[\textbullet] $\forall a,b \in \K(\X)\setminus \{0\}$,
  $v_P(a+b) \geq \min \{v_P(a),v_P(b)\}$ and equality holds when
  $v_P(a) \neq v_P(b)$.
\end{enumerate}

Finally, it should be emphasized that, even if we defined the notion
at a rational point, one can actually extend the notion to any
geometric point by replacing $\K (\X)$ by $\Kbar (\X)$, \ie{} the
field of rational functions on $\X$ with coefficients in $\Kbar$.
Therefore, the valuation may be defined at any possible point.

\subsection{Divisors}
A fundamental object when studying the geometry and arithmetic of a
curve is divisors which somehow are the curve/function fields
counterpart of fractional ideals in the theory of number fields.

Given a {\bf smooth} curve $\X$ over a perfect field $\K$, a
(geometric) {\em divisor} is a formal $\Z$--linear combination of
geometric points of $\X$. A divisor is said to be {\em rational}
if it is globally invariant under the action of $\text{Gal}(\Kbar/\K)$.
Equivalently, it is a formal sum of closed points of $\X$.

Hence a divisor $G$ on $\X$ can be
represented as
\begin{equation}\label{eq:divisor}
  G = n_1 P_1 + \cdots + n_r P_r,
\end{equation}
where the $n_i$'s are integers and the $P_i$'s are geometric points of
$\X$.  The set $\{P_1, \dots , P_r\}$ is referred to as the {\em
  support} of $G$. The divisor is rational if for any
$i, j \in \{1, \dots, r\}$ such that $P_i, P_j$ are in the same orbit under
the action of $\text{Gal}(\Kbar/\K)$, then $n_i = n_j$.

\begin{remark}
  We emphasize that a sum of rational points yields a rational
  divisor but the converse is false. A rational divisor may be a sum
  of non rational points. See the subsequent Example~\ref{ex:rat_div}.
\end{remark}

\begin{example}\label{ex:rat_div}
  Back to the curve of Example~\ref{ex:circle} defined over $\Q$ with
  equation $x^2+y^2-1 = 0$, consider the points
  $P = (\frac 1 2, \frac{\sqrt{3}}{2}), P' = (\frac 1 2,
  -\frac{\sqrt{3}}{2})$ and $Q = (1,0)$. Then, $aP+bP'+cQ$ is a
  rational divisor on the curve if and only if $a=b$.
\end{example}

The group of divisors is equipped with a partial order relation denoted
$\leq$ and defined as follows. Given two divisors
\[
  G = \sum_{P \in \X (\Kbar)} n_P P \quad \text{and} \quad G' =
  \sum_{P \in \X (\Kbar)} n'_P P,
\]
we say that $G \leq G'$ if
\[
\forall P \in \X (\Kbar),\ n_P \leq n'_P.
\]
In particular, a divisor $G$ is said to be {\em positive} if $G \geq 0$,
where $0$ denotes the zero divisor.

Given a divisor $G$ as in \eqref{eq:divisor}, its {\em degree} is
defined as
\[
  \deg G \eqdef n_1 + \dots + n_r.
\]

Given a function $f \in \K (\X) \setminus \{0\}$, one can associate
its divisor denoted $(f)$ and defined as
\begin{equation}\label{eq:princ_div}
  (f) \eqdef \sum_{P\in \K (\X)} v_P(f) P.
\end{equation}
Such a divisor is called a {\em principal divisor}.

\begin{remark}
  For such an object to be a divisor, we need to show that the
  sum~(\ref{eq:princ_div}) is finite, \ie{} that the $n_P$'s are all
  zero but a finite number of them. This is actually due to a well--known
  fact appearing in the next statement whose proof is omitted.
\end{remark}

\begin{proposition}
  A nonzero rational function on a curve has only a finite number of
  zeroes and poles.
\end{proposition}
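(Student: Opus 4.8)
\medskip

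The plan is to reduce the statement to the elementary fact that two plane curves with no common component meet in only finitely many points. First I would dispose of the trivial case: if $h$ is a nonzero constant then $(h) = 0$. Assuming $h$ nonconstant, I would use Remark~\ref{rem:same_function_field}: the curve $\X$ is covered by three affine curves $\X_1,\X_2,\X_3$ --- its traces on the three standard affine charts of $\P^2$ --- each with the same function field $\K(\X)$. Since the support of $(h)$ is the set of points where $v_P(h) \neq 0$, it suffices to bound this set on each $\X_i$; so I may assume $\X = Z(f) \subseteq \A^2(\Kbar)$ with $f \in \K[x,y]$ absolutely irreducible and $\K(\X) = \mathrm{Frac}(\K[x,y]/(f))$, writing $Z(g)$ for the vanishing locus of a polynomial $g$ in $\A^2(\Kbar)$. (A more conceptual route: a nonconstant $h$ gives by Proposition~\ref{prop:on_rational_maps} a regular map $h : \X \to \P^1$, whose zeroes and poles are the fibres $h^{-1}(0:1)$ and $h^{-1}(1:0)$, finite because a nonconstant map of curves has finite fibres; but this uses more than has been set up here.)

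\medskip

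Next I would localise. Writing $h = a/b$ with $a, b \in \K[x,y]$, the fact that $h$ is a \emph{nonzero} element of $\mathrm{Frac}(\K[x,y]/(f))$ forces $f$ to divide neither $a$ nor $b$, and since $f$ stays irreducible over $\Kbar$ the same holds in $\Kbar[x,y]$. Then at any $P \in \X(\Kbar)$ with $a(P) \neq 0 \neq b(P)$, both $a$ and $b$ are units in the local ring $\O_{\X,P}$ (their inverses have a denominator not vanishing at $P$), so $v_P(h) = v_P(a) - v_P(b) = 0$. Hence every zero or pole of $h$ on this chart lies in
\[
  \bigl( Z(f) \cap Z(a) \bigr) \ \cup\ \bigl( Z(f) \cap Z(b) \bigr).
\]
So it remains to show $Z(f) \cap Z(a)$, and likewise $Z(f) \cap Z(b)$, is finite, and this is the hard part --- the only genuinely non-formal step. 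Since $f$ is irreducible over $\Kbar$ and does not divide $a$, the curves $Z(f)$ and $Z(a)$ have no common component, and I would conclude finiteness from the classical weak form of B\'ezout's theorem (see \cite[\S~1.6]{fulton1989book}), provable via the resultant $\mathrm{Res}_y(f,a) \in \Kbar[x]$, a nonzero polynomial constraining the intersection to lie over finitely many values of $x$, each with finitely many $y$ above it.

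\medskip

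Assembling these, the support of $(h)$ meets each of the three affine charts in a finite set, hence is finite, which is the claim. The delicate point in the B\'ezout step is the possible degeneracy of the leading $y$-coefficients of $f$ and $a$ (intersection points ``at infinity in the $y$-direction''); this is harmless here precisely because one is free to pass to another of the three affine charts, or one may simply invoke B\'ezout for projective plane curves from the start and skip the chart bookkeeping altogether.
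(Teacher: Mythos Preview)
The paper does not prove this proposition: the remark immediately preceding it says explicitly that the statement is ``a well--known fact appearing in the next statement whose proof is omitted.'' So there is no paper proof to compare against.

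Your argument is correct and is the natural elementary one in the plane-curve setting the paper has developed up to this point. The reduction to affine charts via Remark~\ref{rem:same_function_field}, the observation that $v_P(h)\neq 0$ forces $P$ into $Z(f)\cap Z(a)$ or $Z(f)\cap Z(b)$, and the appeal to weak B\'ezout (two plane curves over $\Kbar$ with no common component meet in finitely many points, \cite[\S~1.6]{fulton1989book}) are all sound. Your parenthetical alternative via Proposition~\ref{prop:on_rational_maps} is morally the cleaner route, but you are right that the paper has not yet stated that nonconstant maps of curves have finite fibres, so it would be circular to invoke it here.

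One small quibble: your closing remark about ``degeneracy of the leading $y$-coefficients'' is slightly beside the point. The only thing you need is that $\mathrm{Res}_y(f,a)\in\Kbar[x]$ is a nonzero polynomial, which follows from $f$ and $a$ having no common factor in $\Kbar[x,y]$ (and $f$ genuinely involving $y$; if $\deg_y f=0$ then $f$ is linear in $x$ and the chart is a line, where the claim is immediate). Vanishing leading coefficients affect the \emph{count} of intersection points, not the finiteness, so the issue you flag does not actually threaten the argument.
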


\begin{remark}\label{rem:princ=rat}
  It is worth noting that a principal divisor is rational.  Indeed,
  one can first note that, since $f \in \K (\X)$ and hence has its
  coefficients in $\K$, then for any geometric point $P \in \X(\Kbar)$
  and any $\sigma \in \text{Gal}(\Kbar/\K)$ then
  $v_P(f) = v_{\sigma (P)}(f)$.
\end{remark}

The following very classical statement is crucial in the
sequel. 

\begin{proposition}\label{prop:degree_principal}
 The degree of principal divisor is always $0$.
\end{proposition}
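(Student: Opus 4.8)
The plan is to prove that a nonzero rational function $f \in \K(\X)^\times$ on a smooth projective absolutely irreducible curve $\X$ has $\deg(f) = 0$, by reducing to the case of $\P^1$ via the morphism $f$ itself, and then doing an explicit computation on $\P^1$. First I would recall that $f$ defines a rational map $f : \X \dashrightarrow \P^1$, which by Proposition~\ref{prop:on_rational_maps}(i) is regular (since $\X$ is smooth), and which is either constant — in which case $(f) = 0$ trivially since $f$ is a nonzero constant with no zeros or poles — or surjective by Proposition~\ref{prop:on_rational_maps}(ii). So assume $f$ non constant, of degree $m = [\K(\X):\K(\P^1)]$ where we view $\K(\P^1) = \K(t)$ embedded in $\K(\X)$ via $t \mapsto f$.

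The key step is to compare the divisor of $f$ on $\X$ with the divisor of the coordinate function $t$ on $\P^1$. On $\P^1$ the function $t$ has exactly one simple zero (at $0$) and one simple pole (at $\infty$), so $(t)_{\P^1} = [0] - [\infty]$ has degree $0$. The idea is that $(f)_{\X} = f^{*}\big([0] - [\infty]\big)$, the pullback of this divisor along the morphism $f$, and that pullback of divisors preserves degree up to the factor $m$: for any closed point $Q$ of $\P^1$, one has $\sum_{P \mapsto Q} e_P \cdot [\K(P):\K(Q)] = m \cdot \deg Q$, where $e_P = v_P(f - f(Q))$ is the ramification index (replacing $f - f(Q)$ by $1/f$ over $Q = \infty$). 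Summing the contributions over the fibers above $0$ and above $\infty$, the zeros of $f$ contribute total degree $+m$ and the poles contribute $-m$, and all other fibers contribute nothing to $(f)$; hence $\deg(f)_{\X} = m - m = 0$.

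The main obstacle is justifying the fiber-degree identity $\sum_{P \mapsto Q} e_P [\K(P):\K(Q)] = m$, i.e. that the total "weighted" size of each fiber of a finite morphism of smooth curves equals the degree of the morphism. This is the fundamental theorem relating ramification to degree (the analogue of $\sum e_i f_i = n$ for extensions of Dedekind domains / function fields), and it is really the heart of the matter; everything else is bookkeeping. In a lecture-note setting one would either cite this (e.g. from Stichtenoth or Hartshorne IV.2) or, alternatively, sidestep the ramification language entirely and argue algebraically: realize $f$ as an element of the Dedekind domain picture, use that $(f)_{\X}$ is the pullback of the principal divisor $(t)$ on $\P^1$ under the inclusion of function fields, and invoke the conorm/transfer map on divisor groups, which multiplies degrees by $[\K(\X):\K(\P^1)]$. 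Either way the degree-$0$ conclusion on $\P^1$ is elementary (a rational function in one variable has equally many zeros and poles counted with multiplicity, by factoring numerator and denominator), and transporting it to $\X$ gives the result.
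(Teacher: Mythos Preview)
The paper does not actually prove this proposition: it is introduced as a ``very classical statement'' and left without proof, in keeping with the paper's stated policy that ``here most of the proofs will be omitted.'' So there is no argument in the paper to compare against.

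Your proposed approach is correct and is in fact the standard one. Viewing a nonconstant $f \in \K(\X)^\times$ as a finite morphism $f : \X \to \P^1$ of degree $m$, the divisor $(f)$ on $\X$ is the pullback $f^*([0]-[\infty])$, and the fundamental identity $\sum_{P \mapsto Q} e_P\,[\K(P):\K(Q)] = m$ for each point $Q$ of $\P^1$ gives $\deg(f) = m - m = 0$. You have correctly identified that this fiber-degree identity is the substantive input; it is precisely \cite[Thm.~I.4.11]{stichtenoth2009book} (or \cite[Prop.~II.6.9]{hartshorne1977book}), both of which the paper already cites as background references. The alternative you mention, via the conorm map on divisors of a function field extension, is really the same argument in different language and is also how Stichtenoth packages it.

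One small point: your reduction to $\P^1$ uses Proposition~\ref{prop:on_rational_maps}, which in the paper is also stated without proof, so strictly speaking you are trading one admitted fact for another. But that is unavoidable at this level of the exposition, and your outline is exactly what a self-contained treatment would do.
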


We finish this discussion with a statement that we admit and which will
be useful later.

\begin{proposition}\label{prop:constant_functions}
  A principal divisor $(f)$ associated to $f \in \K (\X)^\times$ is zero if
  and only if $f$ is constant.
\end{proposition}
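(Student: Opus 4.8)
The plan is to prove both implications separately, using the properties of valuations and the structure of the local rings $\O_{\X, P}$ established above.

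\medskip

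\textbf{The easy direction.} First I would show that if $f$ is constant, then $(f) = 0$. If $f \in \K^\times$ is a nonzero constant, then for every geometric point $P$ we have $f \in \O_{\X, P}^\times$ (indeed $f = f \cdot t^0$ with $f$ itself a unit), so $v_P(f) = 0$. Hence every coefficient in $(f) = \sum_P v_P(f) P$ vanishes and $(f) = 0$. More directly: $v_P$ is a group homomorphism from $\K(\X)^\times$ to $\Z$ by property (i), and $v_P(1) = v_P(1 \cdot 1) = 2 v_P(1)$ forces $v_P(1) = 0$; then $v_P(f) = v_P(f) + v_P(1) = 0$ for $f \in \K^\times$ since $f = f \cdot 1$ — actually one just notes $f$ is a unit in $\O_{\X, P}$.

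\medskip

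\textbf{The main direction.} Now suppose $(f) = 0$; I must show $f$ is constant. The condition $(f) = 0$ means $v_P(f) \geq 0$ for every geometric point $P$, i.e. $f$ has no poles anywhere on $\X$. The key idea is that a rational function on a smooth projective curve with no poles must be constant. I would argue as follows: consider the rational map $f : \X \dashrightarrow \P^1$ sending a point to $(f : 1)$ (and a pole of $f$ to the point at infinity $(1:0)$). Since $\X$ is smooth, by Proposition~\ref{prop:on_rational_maps}(i) this map is regular everywhere. If $f$ were non-constant, then by Proposition~\ref{prop:on_rational_maps}(ii) the map $f : \X \to \P^1$ would be surjective; in particular it would hit the point at infinity $(1:0) \in \P^1$, meaning $f$ has a pole at some point $P \in \X$, contradicting $v_P(f) \geq 0$ for all $P$. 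Therefore $f$ is constant. Alternatively — and this is perhaps the cleaner route avoiding the identification of "value at infinity" with "pole" — one can invoke Proposition~\ref{prop:degree_principal}: if $f$ is non-constant then $f$ genuinely takes at least two distinct values (or: $f$ has a pole, since a non-constant regular map to $\P^1$ is surjective and so attains $\infty$), hence $(f)$ has a strictly positive part (the zeros) and a strictly negative part (the poles), so $(f) \neq 0$.

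\medskip

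\textbf{Main obstacle.} The delicate point is justifying rigorously that a non-constant $f \in \K(\X)^\times$ must actually \emph{have} a pole on the smooth projective curve $\X$ — equivalently, that one cannot have $v_P(f) \geq 0$ at every point with $f$ non-constant. This is exactly where projectivity (completeness) of the curve is essential: on an affine curve, non-constant regular functions abound. The argument via Proposition~\ref{prop:on_rational_maps} handles this cleanly provided one is comfortable extending the rational map $(x,y) \mapsto (f(x,y):1)$ to the points where $f$ has a pole by setting the value to $(1:0)$, which is legitimate because multiplying both homogeneous coordinates by $1/f$ (viewed in $\K(\X)^\times$) clears the pole; regularity then follows from smoothness. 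If one prefers to stay purely algebraic, the cleanest substitute is: the zero set of a non-constant $f$ (a proper nonempty closed subset, nonempty because $\deg(f) = 0$ forces the polar part to be nonzero whenever the zero part is, and $f$ non-constant means $f - c$ has a zero for suitable $c$) forces $(f) \neq 0$ — but this circularity around Proposition~\ref{prop:degree_principal} must be broken by the geometric surjectivity statement, so I would lean on Proposition~\ref{prop:on_rational_maps}(ii) as the substantive input.
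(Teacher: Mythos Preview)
The paper does not actually prove this proposition: it is explicitly admitted (``We finish this discussion with a statement that we admit and which will be useful later''). So there is no proof in the paper to compare against.

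Your argument is correct. The easy direction is immediate. For the main direction, your route via Proposition~\ref{prop:on_rational_maps} is the standard one and works: the rational map $P \mapsto (f(P):1)$ extends to a regular map $\X \to \P^1$ by smoothness of $\X$; if $f$ is non-constant this map is non-constant, hence surjective, hence hits $(1:0)$; and the preimages of $(1:0)$ are exactly the poles of $f$ (near a point with $v_P(f) = -n < 0$ one rewrites the map as $(1 : 1/f)$, which is regular at $P$ and sends $P$ to $(1:0)$), contradicting $(f)=0$. This is a complete proof.

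Your ``alternative'' sketch through Proposition~\ref{prop:degree_principal} is, as you yourself flag, circular-adjacent and adds nothing; I would simply delete that paragraph and the ``Main obstacle'' discussion, keeping only the clean surjectivity argument.
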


\subsection{Genus and Riemann--Roch Theorem}\label{ss:genus_and_RR}
The most elementary curve one may define is the affine line
$\A^1$ and its projective closure being the projective line $\P^1$.
Regular functions on $\A^1$ are nothing but univariate polynomials.
Regarding such a polynomial $h(x) \in \K [x]$ as rational function on
$\P^1$, it has a pole at the point ``at infinity'', \ie{} the
points with homogeneous coordinates $(1:0)$ and one can prove that the
valuation at this pole is nothing but $-\deg h$.

Therefore, the space $\K [x]_{\leq n}$ of polynomials of degree less
than or equal to $n$ can be (with enough pedantry) defined as the space of
rational functions on $\P^1$ which are regular everywhere on an affine
chart and with valuation larger than or equal to $-n$ at the point at
infinity. Denoting by $P_{\infty}$ this point at infinity, then the
space $\K [x]_{\leq n}$ can be regarded as the space of rational
functions $h \in \K (\P^1)$ which are either $0$ or such that
\[
  (h) \geq - n P_{\infty}.
\]
As the following definition suggests, Riemann--Roch spaces are
generalisations for curves of the spaces $\K [x]_{\leq n}$.

\begin{definition}[Riemann--Roch space]
  Let $\X$ be a smooth projective absolutely irreducible curve over $\K$
  and $G$ be a rational divisor on $X$. Then the {\em Riemann--Roch space
    associated to $G$} is defined as
  \[
    L(G) \eqdef \left\{h \in \K(\X) ~|~ (h) + G \geq 0 \right\} \cup \{0\}.
  \]
  This is a vector space over $\K$.
\end{definition}

\begin{remark}
  According to the previous discussion, on $\P^1$, we have
  $L(nP_{\infty}) \simeq \K [x]_{\leq n}$.
\end{remark}

The following statement summarises some properties of Riemann--Roch spaces.

\begin{proposition}\label{prop:RRspaces}
  \begin{enumerate}[(i)]
  \item A Riemann--Roch space is a vector space over $\K$ of finite dimension;
  \item\label{item:RRzero} For any rational divisor $G < 0$, we have
    $L(G) = \{0\}$;
  \item\label{item:dimRRspace} For any rational divisor $G$, we have
    $\dim_\K L(G) \leq \deg G + 1$.
  \end{enumerate}
\end{proposition}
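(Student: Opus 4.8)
The plan is to prove the three items in order, with item (iii) being the substantial one and the source of the main work.

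\textit{Item (ii).} Suppose $G < 0$ and take $h \in L(G)$ with $h \neq 0$. Then $(h) + G \geq 0$, i.e. $(h) \geq -G > 0$, so $(h)$ is a positive nonzero divisor. But by Proposition~\ref{prop:degree_principal} the degree of $(h)$ is $0$, and a positive divisor of degree $0$ is the zero divisor; this contradicts $(h) \geq -G$ with $-G \neq 0$. Hence $L(G) = \{0\}$.

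\textit{Item (iii).} The strategy is to reduce to item (ii) by subtracting points one at a time. First I would treat the case $\deg G < 0$, which is covered by (ii) since then $L(G) = \{0\}$ and $\dim_\K L(G) = 0 \leq \deg G + 1$ fails only if $\deg G + 1 < 0$; so actually one must be slightly careful and phrase the induction on $\deg G \geq -1$. The key lemma is: for any rational divisor $G$ and any closed point $Q$ of degree $\deg Q$, one has
\[
  \dim_\K L(G) - \dim_\K L(G - Q) \leq \deg Q.
\]
To see this, fix a geometric point $P$ in the orbit defining $Q$ and a local parameter $t$ at $P$ with $m \eqdef v_P(G)$ (meaning the coefficient of $P$ in $G$; by rationality it is the same for all points of the orbit). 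Any $h \in L(G)$ satisfies $v_P(h) \geq -m$, so $t^m h$ lies in $\O_{\X,P}$; evaluating the image of $t^m h$ in the residue field $\O_{\X,P}/\mathfrak{m}_{\X,P}$, which is an extension of $\K$ of degree $\deg Q$, gives a $\K$-linear map $L(G) \to \O_{\X,P}/\mathfrak{m}_{\X,P}$ whose kernel is exactly $L(G - Q)$. Hence the quotient $L(G)/L(G-Q)$ embeds into a $\K$-vector space of dimension $\deg Q$, proving the lemma. Then, writing $G = G' + Q$ with $\deg G' = \deg G - \deg Q < \deg G$ and inducting (base case: when $\deg G < 0$, item (ii) gives $L(G) = \{0\}$, and if $\deg G = 0$ one checks directly that $\dim_\K L(G) \leq 1$, e.g. by noting that two functions in $L(G)$ with the same nonzero divisor differ by a constant via Proposition~\ref{prop:constant_functions}), one obtains
\[
  \dim_\K L(G) \leq \dim_\K L(G') + \deg Q \leq (\deg G' + 1) + \deg Q = \deg G + 1.
\]

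\textit{Item (i).} This now follows: for any rational $G$, either $\deg G < 0$ and $L(G) = \{0\}$ by (ii), or $\deg G \geq 0$ and $\dim_\K L(G) \leq \deg G + 1 < \infty$ by (iii).

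The main obstacle is the evaluation-map argument in item (iii): one must be careful that the "evaluation at $P$ of $t^m h$" is well defined independently of the chosen local parameter up to a unit, that the residue field $\O_{\X,P}/\mathfrak{m}_{\X,P}$ really has $\K$-dimension equal to $\deg Q = [\K(P):\K]$, and that the construction is compatible with the Galois action so that it descends to a map defined over $\K$ (this is where rationality of $G$ is used). The rest is bookkeeping on degrees and a straightforward induction.
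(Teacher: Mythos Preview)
The paper does not prove this proposition; it is stated as background material, with proofs omitted as announced at the beginning of Section~\ref{sec:curves}. Your argument is the standard textbook one (essentially the approach in Stichtenoth's book), and it is correct in outline.

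Two technical points are worth tightening. First, item~(ii) as stated in the paper concerns $G<0$ in the partial order on divisors, whereas the base case you invoke in~(iii) is ``$\deg G<0$''. These are different hypotheses. The implication $\deg G<0 \Rightarrow L(G)=\{0\}$ does hold, by the same degree argument (if $0\neq h\in L(G)$ then $(h)+G\geq 0$ forces $\deg G=\deg((h)+G)\geq 0$), but it is not literally item~(ii); you should state and prove this variant separately.

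Second, and more substantively: in your inductive step you subtract a closed point $Q$ and appeal to the inductive hypothesis $\dim_\K L(G')\leq\deg G'+1$ for $G'=G-Q$. This is only valid when $\deg G'\geq -1$, i.e.\ when $\deg Q\leq\deg G+1$. Over a general perfect field $\K$ the curve need not have a closed point of degree~$1$ (or even of degree at most $\deg G+1$), and then the induction overshoots: from $\dim_\K L(G')=0$ you only get $\dim_\K L(G)\leq\deg Q$, which can exceed $\deg G+1$. The clean fix is to prove the inequality first over~$\Kbar$, where degree-one points abound and your argument runs verbatim, and then note that $L(G)\otimes_\K\Kbar$ coincides with the Riemann--Roch space of $G$ computed over $\Kbar$, so the dimensions agree.
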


With the above statement at hand, we can introduce a fundamental
invariant of a curve: its {\em genus}. There are dozens of manners to
define this object but none of them is
trivial. The one given in these notes is far from being satisfying since
it is clearly not intuitive. However, it permits to define the object
with a minimal amount of material.

\begin{definition}[Genus of a curve]\label{def:genus}
  Let $\X$ be a smooth projective absolutely irreducible curve. The
  {\em genus} of $\X$ is defined as
  \[
    g = 1 - \min_D \{\dim_\K L(D) - \deg D\},
  \]
  where $D$ ranges over all the divisors of $\X$.
\end{definition}

\begin{remark}\label{rem:genus_is_positive}
  Proposition~\ref{prop:RRspaces}~(\ref{item:dimRRspace}) asserts that
  the involved minimum exists and that the genus is nonnegative. 
\end{remark}

\begin{exercise}
  Prove the statement of Remark~\ref{rem:genus_is_positive}.
\end{exercise}

\begin{exercise}
  Using Definition~\ref{def:genus},
  prove that the genus of the projective line $\P^1$ is zero.
\end{exercise}

Note that the effective computation of the genus is not a simple task.
However, for smooth plane curves of degree $d$, there is a closed
formula (see \cite[Prop.~VIII.5]{fulton1989book}):
\[
  g = \frac{(d-1)(d-2)}{2}\cdot
\]
This permits in particular to prove that the projective line and smooth
conics have genus $0$.

\begin{remark}\label{rem:arith_genus}
  The notion of genus can actually be defined for singular curves.
  In this context, two distinct invariants respectively called
  {\em arithmetic genus} and {\em geometric genus} can be defined.
  These two invariants coincide when the curve is smooth.
\end{remark}

We conclude this section with Riemann--Roch Theorem, which is a crucial
statement in the theory of algebraic curves. This statement is admitted and
we refer the reader to Fulton \cite{fulton1989book} or Stichtenoth's
\cite{stichtenoth2009book} book for a proof. The first part of the statement
is actually a straightforward consequence of the definition we gave
for the genus (Definition~\ref{def:genus}).

\begin{theorem}[Riemann--Roch Theorem]\label{thm:RR}
  Let $\X$ be a smooth absolutely irreducible curve of genus $g$ over
  $\K$ and $G$ be a rational divisor on $\X$. Then
  \[
    \dim_{\K} L(G) \geq \deg G + 1 - g
  \]
  and equality holds when $\deg G > 2g-2$.
\end{theorem}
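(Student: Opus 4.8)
The first inequality, $\dim_\K L(G) \geq \deg G + 1 - g$, follows immediately from Definition~\ref{def:genus}: that definition states $g = 1 - \min_D \{\dim_\K L(D) - \deg D\}$, so for every rational divisor $G$ we have $\dim_\K L(G) - \deg G \geq 1 - g$, which rearranges to the claim. So the entire content of the theorem is the equality assertion when $\deg G > 2g - 2$, and that is the step I expect to be the real obstacle, since it genuinely requires input beyond the definition of genus — namely Serre duality, or equivalently the existence of a canonical divisor $K$ with $\deg K = 2g - 2$ together with the duality formula
\[
  \dim_\K L(G) = \deg G + 1 - g + \dim_\K L(K - G).
\]
Absent the machinery of differentials, which the excerpt has not developed, I would take the existence of such a $K$ and the duality identity as the one external ingredient to import (citing Fulton or Stichtenoth, as the theorem statement already anticipates).

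Granting the duality formula, the equality case is then quick. First I would establish $\deg K = 2g - 2$: apply the duality identity with $G = K$ to get $\dim_\K L(K) = \deg K + 1 - g + \dim_\K L(0)$, and apply it with $G = 0$ to get $\dim_\K L(0) = 1 - g + \dim_\K L(K)$; since $\dim_\K L(0) = 1$ by Proposition~\ref{prop:constant_functions} (the only functions with $(h) \geq 0$ are the constants), subtracting gives $\deg K = 2g - 2$. Next, suppose $\deg G > 2g - 2 = \deg K$. Then $\deg(K - G) = \deg K - \deg G < 0$, so by Proposition~\ref{prop:RRspaces}~(\ref{item:RRzero}) we have $L(K - G) = \{0\}$ and hence $\dim_\K L(K - G) = 0$. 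Plugging this into the duality identity yields exactly $\dim_\K L(G) = \deg G + 1 - g$, as desired.

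An alternative route, if one wants to avoid invoking Serre duality as a black box, is to prove the ``Riemann part'' (the inequality, which we already have) together with a direct argument that the ``defect'' $\ell(G) := \dim_\K L(G) - (\deg G + 1 - g)$ is eventually zero and in fact constant once $\deg G$ is large: one shows $\ell(G) \geq \ell(G + P)$ for any point $P$ (adding a point to $G$ can only help by at most one dimension, by a short exact sequence / local-parameter argument comparing $L(G)$ and $L(G+P)$), and then that for $\deg G$ large enough this quantity reaches the stable value $0$. The delicate point in this approach is showing the stable value is precisely $0$ rather than some positive constant, which again ultimately needs an independent handle on the canonical class — so I would still regard the clean proof as: take the duality formula as given, deduce $\deg K = 2g-2$, and conclude by the vanishing in Proposition~\ref{prop:RRspaces}~(\ref{item:RRzero}). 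The main obstacle, to state it plainly, is that the equality in Riemann--Roch cannot be squeezed out of the stated definition of genus alone; it is the genuinely new arithmetic input of the theorem.
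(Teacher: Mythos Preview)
Your analysis is correct, and in fact goes further than the paper does. The paper does not prove this theorem at all: it explicitly admits the statement, referring the reader to Fulton or Stichtenoth for a proof, and only remarks that the inequality $\dim_\K L(G) \geq \deg G + 1 - g$ is a straightforward consequence of Definition~\ref{def:genus} --- exactly the observation you make in your first paragraph. Your sketch of the equality case via the duality formula and the canonical divisor is the standard argument and is sound, but there is nothing in the paper to compare it against; you have correctly identified that this is precisely the ``external ingredient'' the paper chooses to import as a black box.
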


\subsection{The Riemann--Hurwitz formula}
The last statement that will be useful in the sequel is
Riemann--Hurwitz formula which relates the genera of two smooth
projective absolutely irreducible curves $\X, \Y$ linked by a non
constant rational map $\varphi : \X \dashrightarrow \Y$.  Recall that,
according to Proposition~\ref{prop:on_rational_maps}, such a map is
regular and surjective. Denoting by $\delta$ its degree (see
\S~\ref{ss:rat_maps} for the definition of degree), consider any
geometric point $P \in \Y(\Kbar)$. Then one can prove that
$\varphi^{-1}(\{P\})$ is a finite subset of $\X(\Kbar)$ and that for
any $P$ but finitely many of them the cardinality of
$\phi^{-1}(\{P\})$ always equals $\delta$.

The finite number of points of $\Y(\Kbar)$ where this no longer holds
are called {\em ramified points}. Given $Q \in \X(\Kbar)$,
$P = \varphi(Q)$ and $t$ a local parameter at $P$, the {\em
  ramification index of $Q$} is defined as
\[
  e_Q \eqdef v_Q(t\circ \varphi),
\]
$t \circ \varphi$ being an element of $\K(\X)$.  It can be proved that
this definition does not depend on the choice of the local parameter
$t$ at $P$.  According to the previous definition, for any point
$Q \in \X (\Kbar)$ but finitely many of them, we have $e_Q = 1$.

Here we have the material to state Riemann--Hurwitz formula.

\begin{theorem}[Riemann--Hurwitz formula (Tame version)]\label{thm:RH}
  Let $\X, \Y$ be two smooth projective absolutely irreducible curves over $\K$
  and $\varphi : \X \dashrightarrow \Y$ be a rational map.  Suppose that
  for any $Q \in \Y (\Kbar)$, the ramification index $e_Q$ is prime to
  the characteristic of $\K$. Then, the genera $g_\X, g_\Y$ of $\X, \Y$
  are related by the following formula.
  \[
    (2g_\X - 2) = \deg \varphi \cdot (2g_\Y - 2) + \sum_{Q \in \Y(\Kbar)} (e_Q - 1).
  \]
\end{theorem}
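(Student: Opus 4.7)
The plan is the classical one: realize each genus via a canonical divisor and compare canonical divisors under pullback along $\varphi$. The main hidden ingredient, which is not developed in the excerpt, is the theory of rational differentials on a smooth projective absolutely irreducible curve: to any such $\X$ one associates a one--dimensional $\K(\X)$--vector space $\Omega_{\K(\X)/\K}$ of rational differentials, and the divisor $(\omega)$ of any nonzero $\omega$ (the \emph{canonical divisor}) has degree $2g_\X - 2$. I admit this fact (it follows from Serre duality combined with Riemann--Roch, Theorem~\ref{thm:RR}, applied to a canonical divisor).

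Fix a nonzero rational differential $\omega$ on $\Y$ and form its pullback $\varphi^*\omega$ on $\X$, which is again nonzero because $\varphi$ is non--constant and separable (the latter being guaranteed by tameness). The crux is the local computation of $v_Q(\varphi^*\omega)$ at a geometric point $Q \in \X(\Kbar)$ with image $P = \varphi(Q)$. Choose a local parameter $t$ at $P$ and $u$ at $Q$; by definition of $e_Q$ one can write $t \circ \varphi = u^{e_Q} w$ with $w \in \O_{\X, Q}^{\times}$. Writing $\omega = f\, dt$ in a neighbourhood of $P$, the pullback reads
\[
  \varphi^*\omega = (f \circ \varphi)\,d(u^{e_Q} w) = (f \circ \varphi)\bigl(e_Q u^{e_Q - 1} w + u^{e_Q} w'\bigr)\,du.
\]
The tameness assumption $\gcd(e_Q, \mathrm{char}\,\K) = 1$ ensures $e_Q \neq 0$ in $\K$, so the first term dominates and
\[
  v_Q(\varphi^*\omega) = e_Q \cdot v_P(\omega) + (e_Q - 1).
\]

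Summing this identity over all $Q \in \X(\Kbar)$ and regrouping by fibres of $\varphi$, I use the standard identity $\sum_{\varphi(Q) = P} e_Q = \deg \varphi$ (valid at every $P \in \Y(\Kbar)$, including the ramified ones, via the length of $\O_{\Y, P}$ as a module over the semilocal ring $\prod_{\varphi(Q) = P} \O_{\X, Q}$). Together with the projection formula $\deg(\varphi^* D) = \deg \varphi \cdot \deg D$ for the canonical divisor of $\omega$, this yields
\[
  \deg(\varphi^* \omega) = \deg \varphi \cdot \deg(\omega) + \sum_{Q \in \X(\Kbar)} (e_Q - 1).
\]
Substituting $\deg(\omega) = 2 g_\Y - 2$ and $\deg(\varphi^* \omega) = 2 g_\X - 2$ gives exactly the claimed formula.

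The main obstacle is the entry ticket: the whole argument rests on a theory of differentials and on the identification $\deg K = 2g - 2$, neither of which has been set up in the excerpt. Once admitted as a black box, the proof reduces to the one--line local computation above, whose only subtlety is precisely what tameness removes; in the wild case the contribution $(e_Q - 1)$ must be replaced by a larger \emph{different exponent} $\delta_Q \geq e_Q - 1$ coming from the higher--ramification filtration, and the clean formula is lost.
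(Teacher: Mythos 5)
The paper does not prove Theorem~\ref{thm:RH}: it is stated as admitted, with a pointer to \cite[Thm.~3.4.13]{stichtenoth2009book}, consistent with the policy announced at the start of Section~\ref{sec:curves}. So there is no in--text proof to compare against; what you give is the standard differential--theoretic argument, and it is correct modulo the black box you honestly flag (existence of rational differentials, canonical divisors, and $\deg K = 2g-2$, which in the excerpt would have to be grafted on from Riemann--Roch together with duality). The local computation $v_Q(\varphi^*\omega) = e_Q\,v_{\varphi(Q)}(\omega) + (e_Q - 1)$ is exactly where tameness enters --- it guarantees $e_Q \neq 0$ in $\K$ so that $e_Q u^{e_Q - 1}w$ really has valuation $e_Q-1$. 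You also rightly observe that tameness forces separability of $\varphi$ (otherwise $p \mid e_Q$ through the Frobenius factor of $\varphi$) and, implicitly, that the sum in the conclusion must run over $Q \in \X(\Kbar)$, not $\Y(\Kbar)$; the paper's statement (in both the hypothesis and the displayed formula) has a typo there, since $e_Q$ is defined earlier for $Q$ on $\X$.

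One parenthetical slip: your justification of $\sum_{\varphi(Q)=P} e_Q = \deg\varphi$ is phrased backwards. One computes the length of the reduction of the semilocal ring $\prod_{\varphi(Q)=P}\O_{\X,Q}$ (equivalently, of the integral closure of $\O_{\Y,P}$ in $\K(\X)$) as a module over $\O_{\Y,P}$, using that it is free of rank $\deg\varphi$; $\O_{\Y,P}$ is not usefully a module over the larger ring. This does not affect the argument. Your closing remark on the different exponent in the wild case is the right pointer to what the cited reference actually computes: the tame hypothesis is precisely what collapses $\delta_Q$ to $e_Q - 1$, and in the function--field language of Stichtenoth your local step is the statement that the different is $\mathfrak m_Q^{e_Q-1}$ in the tame case.
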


\begin{remark}
  According to the previous discussion, the terms of the sum in the
  above formula are all zero but a finite number of them.
\end{remark}

\begin{remark}
  The assumption ``ramification indexes are prime to the characteristic''
  can be discarded at the cost of replacing the term $\sum (e_Q - 1)$
  by a more complicated one. See \cite[Thm.~3.4.13]{stichtenoth2009book}.
\end{remark}

This formula is particularly useful since many curves $\X$ are
described by a morphism $\X \rightarrow \P^1$. Since $\P^1$ is known
to have genus $0$, the genus of $\X$ can be deduced from the knowledge
of the degree of this map and the ramification indexes.

\begin{example}
  Consider the map~(\ref{eq:proj_cercle}) of
  Example~\ref{ex:map_cercle} but here we regard the curve $\curve C$
  as a curve over $\C$. One sees that any point
  $P = (t:1) \in \P^1(\C)$ has 2 preimages by the map if
  $t\notin \{-1, 1\}$ and only one if $t \in \{-1,1\}$. Therefore,
  there are two ramified points both with ramification index $2$ (one
  can show that the map does not ramify at infinity). Moreover, the
  map has degree $2$.  Then, Riemann--Hurwitz formula yields
  \[
    2g_{\curve C} - 2 = 2(2g_{\P^1}- 2) + 2.
  \]
  Since $g_{\P^1} = 0$, we deduce that $g_{\curve C} = 0$ too.
\end{example}

\subsection{What about non plane curves?}
A last important fact is that some curves are not plane and may be
contained in $\P^N$ for $N > 2$. It is actually important in the
sequel since we are searching for smooth curves $\X$ over a finite
field $\Fq$ with $\card{\X (\Fq)}$ arbitrarily large.  Since
$\card{\P^2(\Fq)}$ is finite (and equal to $q^2+q+1$) such a curve
may not be embeddable in $\P^2$ and requires a larger dimensional ambient
space. So, the question is... what remains true when considering curves
in $\P^N$ with $N> 2$? and actually, how are such objects defined?

We define a projective subvariety of $\P^N$ as the common
vanishing locus of the elements of a homogeneous ideal
$I \subseteq \K [X_0, \dots, X_N]$. If this ideal is prime, then the
variety will be said to be {\em irreducible} and in this setting, the
function field of the variety can be defined in the very same manner
as in the plane case. Then, the {\em dimension} of the variety can be
defined as the transcendence degree of the function field over $\K$.
A curve will be a variety of dimension $1$. Smoothness can be defined
very similarly by requiring a non simultaneous vanishing of all the
partial derivatives with respect to the $N+1$ variables.  All the
other objects, rational maps, valuations, divisors, Riemann--Roch
spaces can be defined in the very same manner at the cost of heavier
notation. Finally all the previous statements on plane curves actually
hold for any curve.

 \section{Algebraic geometry codes}\label{sec:ag_codes}
Now, we have the necessary material to define algebraic geometry (AG) codes.
Before, let us recall the definition of Reed--Solomon codes that
AG codes generalise.

\subsection{Reed--Solomon codes}

\begin{definition}
  Let $\alpha_1, \dots, \alpha_n$ be distinct elements of $\Fq$.
  Let $0 \leq k \leq n$, the code $\mathbf{RS}_k$ is defined as
  \[
    \mathbf{RS}_k(\alpha_1, \dots, \alpha_n) \eqdef \{(p(\alpha_1),
    \dots, p(\alpha_n)) ~|~ p \in \Fq[x]_{\leq k-1}\}.
  \]
\end{definition}

It is well--known that these codes have parameters $[n, k, n-k+1]_q$
and hence reach the Singleton bound \eqref{eq:Singleton}. However, they
are constrained in the sense that the $\alpha_i$'s should be distinct
and hence the length should be bounded by $q$. Thus, even if these
codes have optimal parameters, it is hopeless to use them in order to
construct an infinite family of codes over a fixed field $\Fq$ whose
length goes to infinity. Here, curves enter the game.  Note first that
Reed--Solomon codes may be defined in a much more pedant manner as
follows. Consider the projective line $\P^1$ and let $P_1, \dots, P_n$
be the rational points of $\P^1$ with respective homogeneous
coordinates $(\alpha_1 : 1), \dots , (\alpha_n : 1)$. Then,
$\mathbf{RS}_k(\alpha_1, \dots, \alpha_n)$ may be defined as
\[
  \mathbf{RS}_k(\alpha_1, \dots, \alpha_n) = \left\{
    (h(P_1), \dots, h(P_n)) ~|~
    h \in L((k-1)P_{\infty})\right\}.
\]
This leads to a natural generalisation to algebraic
curves. The interest being the fact that a curve may have more
rational points than the projective line and hence replacing $\P^1$
by an arbitrary curve may provide the opportunity of getting codes
of length larger than $q$.

\subsection{Algebraic geometry codes}
We give a minimal introduction to algebraic geometry (AG) codes. The
reader interested in further references is encouraged to have a look
at the surveys \cite{hoholdt98chapter,duursma2008,couvreur2021} or the
books \cite{tsfasman2007book,stichtenoth2009book}. We also refer to
\cite{hoeholdt95it,beelen08} for references on the decoding of AG
codes.

  \begin{definition}\label{def:AG_code}
    Let $\X$ be a smooth absolutely irreducible curve over $\Fq$.
    Let $\cP = (P_1, \dots, P_n)$ be an ordered sequence of distinct
    rational points of $\X$. Let $G$ be a rational divisor on $\X$ whose
    support avoids the points $P_1, \dots, P_n$.
    Then, the algebraic geometry code $\CL (\X, \cP, G)$ is defined
    as
    \[
      \CLdef  \eqdef \{(f(P_1), \dots, f(P_n)) ~|~ f \in
      L(G)\}.
    \]
  \end{definition}

  Once the codes are defined, their parameters can be evaluated
  using the previously introduced material of algebraic geometry.

  \begin{theorem}\label{thm:param_AGcode}
    Let $\X$ be a smooth absolutely irreducible curve of genus $g$
    over $\Fq$. let $\cP = (P_1, \dots, P_n)$ be a tuple of rational
    points of $\X$ and $G$ be a rational divisor on $\X$ whose support avoids
    $P_1, \dots, P_n$.
    Suppose that $\deg G < n$. Then, the
    parameters $[n, k, d]_q$ of $\CLdef$ satisfy
      \begin{eqnarray}
        k & \geq & \deg G + 1 - g \quad \text{with equality when }
                   \deg G > 2g-2;\\
        d & \geq & n - \deg G. 
      \end{eqnarray}
  \end{theorem}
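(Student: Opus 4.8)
The plan is to analyze the evaluation map $\mathrm{ev} : L(G) \to \Fq^n$ defined by $f \mapsto (f(P_1), \dots, f(P_n))$, which is clearly $\Fq$--linear and whose image is by definition $\CL(\X, \cP, G)$. First I would establish the bound on the dimension. Observe that $\CL(\X, \cP, G) = \mathrm{ev}(L(G))$, so $k = \dim_\Fq L(G) - \dim_\Fq \ker(\mathrm{ev})$. The key claim is that under the hypothesis $\deg G < n$, the map $\mathrm{ev}$ is injective, i.e. $\ker(\mathrm{ev}) = \{0\}$. Indeed, suppose $f \in L(G)$ with $f(P_i) = 0$ for all $i$; then either $f = 0$ or, writing $D_{\cP} = P_1 + \cdots + P_n$, we have $(f) + G \geq 0$ and $(f) \geq D_{\cP}$ on the support of $\cP$ (since the $P_i$ avoid the support of $G$, the vanishing at each $P_i$ forces $v_{P_i}(f) \geq 1$). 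Hence $(f) + G - D_{\cP} \geq 0$, so $f \in L(G - D_{\cP})$. But $\deg(G - D_{\cP}) = \deg G - n < 0$, so by Proposition~\ref{prop:RRspaces}~(\ref{item:RRzero}), $L(G - D_{\cP}) = \{0\}$, forcing $f = 0$. Therefore $\mathrm{ev}$ is injective and $k = \dim_\Fq L(G)$. Then Riemann--Roch (Theorem~\ref{thm:RR}) gives $k = \dim_\Fq L(G) \geq \deg G + 1 - g$, with equality when $\deg G > 2g - 2$.

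Next I would prove the bound on the minimum distance. Let $\cv = \mathrm{ev}(f)$ be a nonzero codeword coming from some $f \in L(G)$ (necessarily $f \neq 0$ since $\mathrm{ev}$ is injective). Let $i_1, \dots, i_s$ be the indices where $\cv$ vanishes, so $\wh(\cv) = n - s$, and I want to show $s \leq \deg G$, equivalently $\wh(\cv) \geq n - \deg G$. As above, $f$ vanishes at $P_{i_1}, \dots, P_{i_s}$, so $v_{P_{i_j}}(f) \geq 1$ for each $j$; combined with $(f) + G \geq 0$ this gives $(f) + G - (P_{i_1} + \cdots + P_{i_s}) \geq 0$, i.e. $f \in L(G - (P_{i_1} + \cdots + P_{i_s}))$. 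Since $f \neq 0$, this Riemann--Roch space is nonzero, so by Proposition~\ref{prop:RRspaces}~(\ref{item:RRzero}) its degree is nonnegative: $\deg G - s \geq 0$, hence $s \leq \deg G$. This yields $d \geq n - \deg G$.

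The main subtlety — not really an obstacle but the point requiring care — is the bookkeeping with divisors: one must use that the support of $G$ is disjoint from $\{P_1, \dots, P_n\}$ so that the condition $(f) + G \geq 0$ at the point $P_i$ reads simply $v_{P_i}(f) \geq 0$, and then the extra vanishing hypothesis $f(P_i) = 0$ upgrades this to $v_{P_i}(f) \geq 1$. A secondary point worth a line is that, since $G$ is rational and the $P_i$ are rational points, $L(G)$ is naturally an $\Fq$--vector space and evaluation lands in $\Fq^n$, so all dimension counts are over $\Fq$; this is already built into the definitions given earlier. Finally, one should remark that $\deg G < n$ is exactly what makes the dimension formula meaningful (it guarantees $k \leq n$ via $d \geq n - \deg G > 0$, so the code is not the whole space in a degenerate way), tying the two inequalities together.
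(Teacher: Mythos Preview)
Your proof is correct and follows essentially the same route as the paper: analyze the evaluation map $L(G)\to\Fq^n$, identify its kernel with $L(G-D_{\cP})$ which vanishes since $\deg G<n$, apply Riemann--Roch for the dimension, and for the minimum distance show that a codeword with $s$ zeros lies in $L(G-P_{i_1}-\cdots-P_{i_s})$ so that $s\leq\deg G$. Your additional remarks on the role of the disjoint-support hypothesis and the rationality of the spaces are accurate and slightly more explicit than the paper's version, but the argument is the same.
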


  \begin{proof}
    Denote by $D_{\cP}$ the divisor
    \(
      D_{\cP} \eqdef P_1 + \cdots + P_n.
    \)
    Consider the map
    \[
      ev_{\cP} : \map{L(G)}{\Fq^n}{f}{(f(P_1), \dots, f(P_n)).}
    \]
    Its image is trivially $\CLdef$.  The kernel of this map is the
    subspace of $L(G)$ of functions $f$ vanishing at
    $P_1, \dots, P_n$. This subspace is nothing but $L(G-D_{\cP})$. By
    assumption, $\deg (G - D_{\cP}) = -(n - \deg G)$, is negative and
    hence, from Proposition~\ref{prop:RRspaces}~(\ref{item:RRzero}),
    $L(G-D_{\cP}) = \ker ev_{\cP} = \{0\}$. Thus,
    $ev_{\cP}$ is injective and 
    \[
      \dim \CLdef = \dim L(G) \geq \deg G + 1 - g,
    \]
    with equality if $\deg G > 2g-2$.  Here, the last inequality
    together with the equality case are due to Riemann--Roch Theorem
    (Theorem~\ref{thm:RR}).

    For the minimum distance, let us
    introduce $h \in L(G) \setminus \{0\}$ such that $ev_{\cP}(h)$ has
    Hamming weight $d$. It means that there exist distinct points
    $P_{i_1}, \dots, P_{i_{n-d}}$ among $P_1, \dots, P_n$ at which $h$
    vanishes. Consequently,
    $h \in L(G - P_{i_1} - \cdots - P_{i_{n-d}})$ and since
    $h \neq 0$, the space
    $L(G - P_{i_1} - \cdots - P_{i_{n-d}}) \neq \{0\}$, which, from
    Proposition~\ref{prop:RRspaces}~(\ref{item:RRzero}) again, implies
    that $\deg (G - P_{i_1} - \cdots - P_{i_{n-d}}) \geq 0$ and hence
    \[
      d \geq n - \deg G.
    \]
  \end{proof}

  Let us comment this last result. It was mentioned in \S~\ref{ss:linear_codes}
  that, from Singleton bound \eqref{eq:Singleton}, any $[n,k,d]_q$ code
  satisfies
  \[
    k+d \leq n+1.
  \]
  On the other hand, Theorem~\ref{thm:param_AGcode} asserts that an $[n,k,d]_q$
  AG code $\CLdef$ satisfies
  \[
    n+1-g \leq k+d.
  \]
  In summary, AG codes are in the worst case at ``distance $g$ from
  Singleton bound''. Thus, one can expect good codes for a ``not too
  large'' genus $g$. On the other hand, the objective is to construct
  sequences of codes whose length exceeds $q$ and more generally
  construct families of codes over $\Fq$ whose length goes to
  infinity. Thus, for the length to be large, we look for curves with
  the largest possible number of rational points.

  \subsection{The problem of infinite sequence of curves with many points compared to their genus}

  We expect to get sequences of curves over
  $\Fq$ whose genus grows slowly and number of rational points grows
  quickly. However, these two objectives are somehow in opposition:
  to get many rational points, we need a large genus. A well--known
  result due to Weil asserts that for a smooth absolutely irreducible
  curve $\X$ over $\Fq$,
  \begin{equation}\label{eq:WeilBound}
    \card{\X(\Fq)} \leq q+1+2g\sqrt{q}.
  \end{equation}
  Thus, we look for a good trade off between the genus and the number
  of rational points. Now, we have the material to reformulate our
  coding theoretic problem of producing asymptotically good infinite
  sequences of codes in terms of the construction of sequences
  of algebraic curves with specific features. For this, let us
  consider a sequence of curves ${(\X_s)}_{s \in \N}$ with sequence of
  genera ${(g_s)}_{s \in \N}$. We suppose that the sequence
  ${(\card{\X_s (\Fq)})}_{s \in \N}$ goes to infinity, hence, according
  to Weil's bound \eqref{eq:WeilBound}, the sequence of genera
  should also go to infinity.
  Let
  \begin{equation}\label{eq:gamma}
    \gamma = \limsup_{s \rightarrow + \infty} \frac{\card{\X_s (\Fq)}}{g_s}\cdot
  \end{equation}
  For any such curve in the sequence, we fix a rational divisor $G_s$
  and the sequence of rational points $\cP_s = (P_1, \dots, P_{n_s})$
  will be chosen as the full list of rational points, \ie{}
  $n_s = \card{\X_s (\Fq)}$.

  \begin{remark}
    One could ask whether it is possible to have a rational divisor $G_s$ of
    any degree whose support avoids $P_1, \dots, P_{n_s}$ while
    $\{P_1, \dots, P_{n_s}\} = \X (\Fq)$? The answer is positive, such
    divisors $G_s$ exist and the constraint that the support of $G_s$
    should avoid $\X (\Fq)$ is actually easy to satisfy. See
    \cite[Rem.~15.3.8]{couvreur2021} for a detailed discussion on this
    specific question.
  \end{remark}

  Then, the codes $\CL (\X_s, \cP_s, G_s)$ have parameters $[n_s, k_s, d_s]_q$
  satisfying
  \[
    \begin{array}{ccl}
      n_s& = & \card{\X_s (\Fq)} \\
      k_s& \geq & \deg G_s + 1 - g_s \\
      d_s& \geq & n_s - \deg G_s.
    \end{array}
  \]
  Therefore, one can eliminate $\deg G_s$ and get
  \begin{equation}\label{eq:asymp_paramAG}
    k_s + d_s \geq n_s + 1 - g_s.
  \end{equation}
  Set
  \[
    R = \limsup_{s \rightarrow + \infty} \frac{k_s}{n_s}\quad
    \text{and} \quad \delta = \limsup_{s \rightarrow + \infty}
    \frac{d_s}{n_s}\cdot
  \]
  Then, dividing \eqref{eq:asymp_paramAG} by $n_s$ and letting
  $s$ go to infinity, we get
  \[
    R+\delta \geq 1 - \frac{1}{\gamma},
  \]
  where $\gamma$ is defined in \eqref{eq:gamma}.
  Therefore, any pair $(\delta, R)$ lying on the line of equation $R + \delta =
  1 - \frac 1 {\gamma}$ is achievable.

  \begin{remark}
    Even if the term $\deg G_s$ has been eliminated, this term is
    worth in order to chose the point in the line of equation
    $R+\delta = 1 - \frac 1 \gamma$ you want to target.
  \end{remark}

  \begin{exercise}
    Prove that by choosing a relevant sequence of rational divisors $(G_s)$ on
    the curves $\X_s$, one can reach any point of the line of equation
    $R+\delta = 1 - \frac{1}{\gamma}\cdot$
  \end{exercise}

\subsection{The Ihara constant $A(q)$}
Now, we would like to estimate the optimal asymptotic parameters
$(\delta, R)$ that can be achieved. For that, let us introduce the {\em
  Ihara constant}:
  \[
    A(q) \eqdef \limsup_{g \rightarrow + \infty} \max_{\stackrel{\X,\ 
     \text{curve}}{ \text{of genus }g}} \frac{\card{\X (\Fq)}}{g}\cdot
\]
Then, the Tsfasman--Vl\u{a}du\c{t}--Zink (TVZ) bound asserts the existence of
families of codes whose asymptotic parameters $(\delta, R)$ satisfy
\[
  R+\delta \geq 1 - \frac{1}{A(q)}\cdot 
\]
This opens the question of the value of $A(q)$.
The remainder of these notes consists in outlining a proof of the
following statement.

\begin{theorem}\label{thm:main}
  For $q = p^2$ and $p$ a prime number, we have
  \[
    A(q) \geq \sqrt{q} - 1.
  \]
\end{theorem}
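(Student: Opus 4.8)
The strategy is to exhibit an explicit sequence of curves over $\F_{p^2}$ whose ratio of rational points to genus tends to $\sqrt{q}-1 = p-1$. The natural candidates, following Tsfasman--Vl\u{a}du\c{t}--Zink and Ihara, are the modular curves $X_0(\ell)$ (or $X(\ell)$) reduced modulo $p$, for $\ell$ ranging over primes (or prime powers) coprime to $p$. The whole point of the later sections of these notes (Sections~\ref{sec:elliptic} and~\ref{sec:modular}) is presumably to build up the two facts one needs:
\begin{enumerate}[(i)]
\item a \emph{genus estimate}: the genus $g_\ell$ of $X_0(\ell)$ grows like $\frac{\ell}{12}(1+o(1))$, which one obtains from the description of $X_0(\ell)\to X_0(1)=\P^1$ as a covering of degree $[\SL:\Gamma_0(\ell)] = \ell\prod_{r\mid \ell}(1+1/r)$ together with the Riemann--Hurwitz formula (Theorem~\ref{thm:RH}), using the known ramification behaviour over the elliptic points $j=0,1728$ and the cusp;
\item a \emph{point count}: the reduction of $X_0(\ell)$ modulo $p$ has at least $(p-1)(g_\ell - 1) + O(1)$ rational points over $\F_{p^2}$.
\end{enumerate}
Granting (i) and (ii), the definition of $A(q)$ as a $\limsup$ immediately gives $A(p^2) \geq \lim_\ell \frac{\#X_0(\ell)(\F_{p^2})}{g_\ell} \geq p-1 = \sqrt{q}-1$, which is the claim.

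The substance is entirely in (ii), and this is where the deep input enters. The mechanism is the theory of the \emph{supersingular locus}: over $\F_{p^2}$, a supersingular elliptic curve has $p^2+1+2p = (p+1)^2$ points, so its Frobenius acts as multiplication by $-p$ and \emph{every} point of order $\ell$ is rational over $\F_{p^2}$; hence a supersingular point on the modular curve, together with the cyclic $\ell$-isogeny data, contributes $\F_{p^2}$-rational points to $X_0(\ell)_{\F_p}$. The Eichler--Deuring mass formula tells us there are roughly $\frac{p-1}{12}$ supersingular $j$-invariants in characteristic $p$, and each sits under $[\SL:\Gamma_0(\ell)]$-many points of $X_0(\ell)$ in the generic fibre; combining these counts and comparing with the genus $g_\ell \sim \frac{\ell}{12}$, one extracts the factor $p-1$. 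One must also know that the modular curve has good reduction at $p$ (it does, since $p \nmid \ell$, by Igusa's theorem) so that the genus is preserved under reduction, and one invokes Theorem~\ref{thm:param_AGcode} and the reduction~\eqref{eq:asymp_paramAG} already carried out in the notes to convert the curve statement into the code statement.

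The main obstacle, and the reason these notes devote two further sections to preparation, is precisely making the point-count~(ii) rigorous: one needs the modular interpretation of $X_0(\ell)$ (moduli of pairs $(E, C)$ with $C$ cyclic of order $\ell$), the fact that this moduli description persists in characteristic $p \nmid \ell$, the behaviour of the supersingular points under isogenies, and the Eichler mass formula counting them --- none of which is elementary. In particular one must be careful that the supersingular points one produces on $X_0(\ell)_{\F_p}$ are genuinely distinct and genuinely $\F_{p^2}$-rational (not merely $\F_{p^4}$-rational), which is where the condition $q = p^2$ (an even power of $p$) is essential: it is exactly over $\F_{p^2}$ that Frobenius on a supersingular curve is scalar. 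Everything else --- the Riemann--Hurwitz genus computation, the passage from $\gamma$ to $(\delta,R)$ --- is routine given the earlier material.
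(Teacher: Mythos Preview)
Your proposal is correct and follows essentially the same route as the paper: exhibit the family $X_0(\ell)$ over $\F_{p^2}$, compute the genus via Riemann--Hurwitz applied to $X_0(\ell)\to X_0(1)$, and produce many $\F_{p^2}$-points from the supersingular locus using that there are about $\frac{p-1}{12}$ supersingular $j$-invariants, each lifting to $\ell+1$ points. The paper's execution differs only in bookkeeping: it restricts to $\ell\equiv 11\pmod{12}$ so that $g_\ell=\frac{\ell+1}{12}$ exactly, and then carries out an explicit case split on $p\bmod 12$ to handle the curves with $j=0,1728$ (extra automorphisms), showing that in every case the point count is at least $(\ell+1)\frac{p-1}{12}$, whence the ratio is exactly $p-1$ --- whereas you absorb these corrections into asymptotic $O(1)$ terms, which is equally valid for the $\limsup$.
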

Combining the previous result with the TVZ bound, one ca prove that for
$p\geq 7$, and hence when $q$ is the square of a prime and is larger
than or equal to $49$, the TVZ bound exceeds the Gilbert Varshamov one,
proving that some families of codes from algebraic curves
are better than random codes.

Let us conclude with some comments.
\begin{itemize}
\item[\textbullet] Actually, the result extends to $q = p^{2m}$ for any $m \geq 1$ but the
  proof gets more complicated and involves other families of curves. Namely,
  the proof to follow involves modular curves, while the general case involves
  Shimura curves. See \cite{ihara1981jfsuTokyo,tsfasman1982mn}.
\item[\textbullet] The TVZ bound is actually optimal. Indeed,
  subsequently to the publication of Tsfasman--Vl\u{a}du\c{t}--Zink
  result, in \cite{vladut1983faa} Drinfeld and Vl\u{a}du\c{t} proved
  that for any prime power $q$, we always have $A(q) \leq \sqrt{q}+1$.
\item[\textbullet] Another proof of Theorem~\ref{thm:main}
  using a very different approach has been given by Garcia and
  Stichtenoth in \cite{GS1995}.
\end{itemize}

The core of the proof of this wonderful result rests on the use of
families of curves called {\em modular curves} which parameterise
families of algebraic curves called {\em elliptic curves}.

 \section{Elliptic curves}\label{sec:elliptic}
Elliptic curves is another fascinating topic in number theory.  They
are also a fundamental object in cryptography but this is not the
point of these notes.  In this section, we start by presenting basic
notions about these objects over an arbitrary field. Our objective is
in particular to construct these so--called {\em modular curves} which
will yield excellent codes. These modular curves are
algebraic curves which parameterise families of elliptic curves with a
specific extra structure called {\em level}.

Afterwards, in Section~\ref{sec:modular}, we will discuss elliptic
curves and modular curves over $\C$. This choice of discussing complex
curves in such notes might seem surprising while our interest will
clearly be curves over finite fields. However, a preliminary study of
the complex case presents several advantages. First, it provides a
much more intuitive presentation of the topics with the benefits of
the possible use of analytical tools. Second, even if the analytic
proofs cannot transpose in the finite field setting, they permit to
compute algebraic formulas, \ie{} polynomial equations defining
modular curves. These equations turn out to be defined over $\Z$ and
then --- and this is very far from being trivial --- their reduction
modulo $p$ will give the equation of a curve parameterising elliptic
curves over $\F_p$ or ${\overline{\F}}_p$ with some level structure.

\medskip

\noindent {\bf Note.} In this section, we assume the ground field $\K$
to have characteristic different from $2$ and $3$. Most of the material
of the present section and the subsequent one are taken from the book
\cite{silverman2009book} and the lecture notes \cite{milneMF}.

\subsection{Basic definitions}
An {\em elliptic curve} $\E$ over a field $\K$ is a smooth projective
curve of genus 1 with at least one rational point denoted by
$O_{\E}$. From Proposition~\ref{prop:constant_functions}, the
Riemann--Roch space $L(0)$ associated to the zero divisor contains
only the constant functions and hence has dimension $1$.  Then, by
Riemann--Roch Theorem, the spaces $L(2O_{\E})$ and $L(3O_{\E})$ have
respective dimensions $2$ and $3$ (note that as soon as the divisor's
degrees are positive, they are $> 2g-2$ and hence we fit in the
equality case of Riemann--Roch Theorem). Denote by $x, y$ two
functions such that
\[
  L(2O_{\E}) = \span_\K \{1, x\} \quad L(3O_{\E}) = \span_\K \{1, x, y\}.
\]
Note that these choices for $x$ and $y$ are not canonical and hence
any of the following changes of variables are admissible
\begin{equation}\label{eq:cv}
  x' \leftarrow ax+b, \ \text{with}\ a \neq 0 \qquad
  y' \leftarrow uy + vx + w, \ \text{with}\ u \neq 0.
\end{equation}
Now, consider the space $L(6O_{\E})$. It contains the functions
\[
  1, x, y, x^2, xy, x^3, y^2.
\]
Moreover, again from Riemann--Roch Theorem, $L(6O_{\E})$ has dimension $6$
and hence there is a nontrivial linear relation on these functions
\[
  y^2 + u xy + v y= ax^3 + bx^2 + cx + d. 
\]

\begin{exercise}\label{ex:anonzero}
  Prove that $y^2$ and $x^3$ should be involved in this linear relation,
  which explains why, after a renormalisation, one can suppose the coefficient
  of $y^2$ to be $1$. Deduce from this that $a \neq 0$.
\end{exercise}

Now, we perform successive changes of variables which are admissible,
\ie{} changes of variables of the form (\ref{eq:cv}). A first one\footnote{
  In order to keep light notation, we remove the ' in $x', y'$ and hence
  write the outputs of the change of variables as the input, hence the notation
  $y \leftarrow y + \frac{u}{2}x$. This is not a completely rigorous
  notation and the reader bothered by this is encouraged to rewrite this page
  by replacing the $x$'s and $y$'s as $x', x'', x'''$ and $y', y'', y'''$
at the good spots.}:
$y \leftarrow y + \frac{u}{2}x$ leads to an equation:
\begin{equation}\label{eq:eq1}
  y^2 + v_1 y = a_1 x^3 + b_1 x^2 + c_1 x + d_1,
\end{equation}
for some $a_1, b_1, c_1, d_1 \in \K $.
A change $y \leftarrow y + \frac{v_1}{2}$ yields
\begin{equation}\label{eq:eq2}
  y^2  = a_2 x^3 + b_2 x^2 + c_2 x + d_2.
\end{equation}
for some $a_2, b_2, c_2, d_2 \in \K $.
Next, a change of the form $x \leftarrow x + \frac{b_2}{3a_2}$ yields
to an equation:
\begin{equation}\label{eq:eq3}
  y^2 = a_3 x^3 + c_3x + d_3,
\end{equation}
for some $a_3, c_3, d_3 \in \K$. Finally, applying the change of variables
$x \leftarrow a_3 x,\ y \leftarrow a_3^2 y$ and dividing both sides by
$a_3^4$, we get an equation of the form
\begin{equation}\label{eq:Weierstrass}
  y^2 =  x^3 + Ax + B,
\end{equation}
for some $A, B \in \K$. Such an equation is called a {\em Weierstrass
equation} of the curve.

\begin{exercise}
  Using Exercise \ref{ex:anonzero}, check that the last
  change of variables was admissible, \ie{} that $a_3 \neq 0$.
\end{exercise}

{\bf In summary}, starting from an elliptic curve $\E$ over $\K$, \ie{}
a smooth genus 1 curve with a rational point $O_{\E}$, we
found two functions $x,y \in \K (\E)$ which are both regular everywhere
but at $O_{\E}$. These functions are related by the relation (\ref{eq:Weierstrass})
and hence the function $y^2 - x^3 -Ax - B$ vanishes everywhere on $\E$.
This leads to the following statement.

\begin{theorem}
  Let $\E$ be an elliptic over a field $\K$ of characteristic
  different from 2 and 3, \ie{} a smooth projective curve of genus 1
  with a rational point $O_{\E}$, then there exist $x, y \in \K (\E)$ such
  that the map
  \[
    \ratmap{\E}{\P^2}{P}{\left\{
        \begin{array}{ccl}
          (x(P) : y(P) : 1) &\text{if}& P \neq O_{\E}\\
          (0:1:0) & \text{if} & P = O_{\E}}
        \end{array}
        \right.
      \]
      induces an isomorphism from $\E$ to the projective closure of
      the projective curve of equation $Y^2 = X^3 + AXZ^2 + BZ^3$.
\end{theorem}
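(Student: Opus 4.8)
The plan is to verify that the rational map $\varphi$ defined in the statement is everywhere regular, that its image lies on the cubic $C$ of equation $Y^2 = X^3 + AXZ^2 + BZ^3$, that $C$ is smooth (hence a genuine curve in our sense), and finally that $\varphi : \E \to C$ is an isomorphism by showing it has degree $1$. The everywhere-regularity is immediate from Proposition~\ref{prop:on_rational_maps}~(i): $\E$ is smooth, so any rational map out of $\E$ is regular. That the image lands on $C$ is precisely the content of the discussion preceding the statement: the functions $x, y \in \K(\E)$ satisfy the Weierstrass relation \eqref{eq:Weierstrass}, so for every $P \ne O_\E$ the point $(x(P):y(P):1)$ satisfies the dehomogenised equation, and the point $(0:1:0)$ satisfies $Y^2Z = X^3 + AXZ^2 + BZ^3$ with $Z = 0$; one should also check $\varphi$ is well-defined at $O_\E$, i.e. that it does not in fact extend the affine formula — here one uses that $x$ has a pole of order $2$ and $y$ a pole of order $3$ at $O_\E$ (from $x \in L(2O_\E)\setminus L(O_\E)$ and $y \in L(3O_\E)\setminus L(2O_\E)$), so in suitable local coordinates $\varphi$ really does send $O_\E$ to $(0:1:0)$, the unique point of $C$ at infinity.

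Next I would show $C$ is smooth. Working in the affine chart $Z = 1$, singularity of $y^2 = x^3 + Ax + B$ would force $2y = 0$ and $3x^2 + A = 0$ together with the equation itself; in characteristic $\ne 2$ this gives $y = 0$, so $x$ is a multiple root of $x^3 + Ax + B$, i.e. the cubic has a repeated root, equivalently its discriminant $-16(4A^3 + 27B^2)$ vanishes. So I need to argue $4A^3 + 27B^2 \ne 0$. This is where Proposition~\ref{prop:constant_functions} and the genus hypothesis come in: if $C$ were singular, its smooth projective model would have genus $0$ (a nodal or cuspidal plane cubic is rational), contradicting that $\varphi$ is a non-constant — indeed injective enough — map from the genus-$1$ curve $\E$; more directly, a non-constant rational map between smooth projective curves cannot raise genus in a way incompatible with Riemann--Hurwitz, and $\P^1$ admits no non-constant map to a genus-$1$ curve. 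I'd phrase it as: were $4A^3+27B^2 = 0$, the normalisation of $C$ is $\P^1$, and composing with $\varphi$ would yield a dominant rational map $\E \dashrightarrow \P^1$ pulled the wrong way — cleaner is to observe that the point at infinity is smooth on $C$ (the partials of $Y^2Z - X^3 - AXZ^2 - BZ^3$ at $(0:1:0)$ are $(0,0,1)\ne 0$), and if the affine part were singular, $C$ would be a rational curve, so $\K(C) \cong \K(t)$, but $\varphi$ induces $\K(C) \hookrightarrow \K(\E)$ and on the other hand $x, y$ generate $\K(\E)$ over $\K$ (see below), forcing $\K(\E) = \K(C)$ to have genus $0$, contradiction.

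The remaining point — which doubles as the argument that $\varphi$ has degree $1$ — is that $\K(\E) = \K(x, y)$. For this I would compute $[\K(\E) : \K(x)]$. The function $x$, viewed as a map $\E \to \P^1$, has a single pole, namely $O_\E$, of order $2$ (since $x \in L(2O_\E) \setminus L(O_\E)$), so by Proposition~\ref{prop:degree_principal} its divisor of zeroes also has degree $2$, hence $[\K(\E):\K(x)] = \deg(x) = 2$. Similarly $y$ has a pole only at $O_\E$, of order $3$, so $[\K(\E):\K(y)] = 3$. Since $\K(x,y)$ is an intermediate field containing both $\K(x)$ and $\K(y)$, its degree over $\K$ inside $\K(\E)$ is divisible by $\gcd$-incompatible data — more precisely $[\K(\E):\K(x,y)]$ divides both $2$ and $3$, hence equals $1$, so $\K(\E) = \K(x,y)$. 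But $\K(x,y)$ is exactly $\K(C)$ (the relation \eqref{eq:Weierstrass} is the defining equation of $C$), so $\varphi^* : \K(C) \to \K(\E)$ is an isomorphism, i.e. $\varphi$ has degree $1$. A degree-$1$ regular map between smooth projective curves is an isomorphism (its inverse is again rational, hence regular by Proposition~\ref{prop:on_rational_maps}~(i) applied to the smooth target). The main obstacle is the bookkeeping around $O_\E$: pinning down the pole orders of $x$ and $y$ precisely and checking that the piecewise formula for $\varphi$ is genuinely regular there and hits $(0:1:0)$; everything else is either a direct invocation of the quoted propositions or the short discriminant computation.
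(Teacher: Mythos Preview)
Your proposal is correct and in fact supplies considerably more detail than the paper's own proof, which simply observes that the image lies on the Weierstrass cubic (from the preceding discussion) and then refers the reader to \cite[Prop.~III.3.1]{silverman2009book} for smoothness of the target and the isomorphism claim. Your argument---computing $[\K(\E):\K(x)]=2$ and $[\K(\E):\K(y)]=3$ from the pole orders at $O_\E$ to force $\K(\E)=\K(x,y)$, and deducing smoothness of $C$ from the genus-$1$ hypothesis via a rationality contradiction---is precisely the content of that reference, so you have effectively unpacked what the paper outsources.

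One small point of presentation: your smoothness argument invokes $\K(\E)=\K(x,y)$ with a forward reference (``see below''), and the final isomorphism step needs $C$ smooth to apply Proposition~\ref{prop:on_rational_maps}~(i) to the inverse. This is not a gap, just an ordering issue: establish $\K(\E)=\K(x,y)$ first (this uses only the pole orders, not anything about $C$), then argue smoothness of $C$ (since a singular plane cubic has function field of genus $0$, contradicting $\K(C)=\K(\E)$), and only then conclude that the degree-$1$ map between smooth curves is an isomorphism.
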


\begin{proof}
  The fact that the image of $\E$ is contained in such a curve is a
  consequence of the previous discussion. To prove that this map is
  actually an isomorphism and in particular that the target
  curve is smooth, we refer the reader to
  \cite[Prop.~III.3.1]{silverman2009book}.
\end{proof}

\begin{remark}
  Geometrically speaking, the sequence of changes of variables can be
  interpreted as follow. We started from an elliptic curve $\E$ and a
  first choice of functions $x, y$ in $\K (\E)$ lead to an isomorphism
  between $\E$ and a curve with equation (\ref{eq:eq1}). Then, we
  applied successive affine automorphisms to the plane in order to get
  curves of successive equations (\ref{eq:eq2}), (\ref{eq:eq3}) which
  are pairwise isomorphic and finish with a curve with
  equation~(\ref{eq:Weierstrass}) which is also isomorphic to $\E$.
\end{remark}

\begin{remark}
  In the sequel, we will not only consider Weierstrass form. Actually,
  one can show that any curve of equation
  \[
    y^2 = f(x)
  \]
  where $f$ is a squarefree polynomial of degree $3$ is an elliptic
  curve and there is a change of variables permitting to put it in
  Weierstrass form.
\end{remark}

\subsection{The $j$--invariant}
In what follows, it will be important to classify elliptic curves up to
isomorphism. For this sake, we introduce a fundamental invariant: the
{\em $j$--invariant}. Reconsider a Weierstrass equation
\eqref{eq:Weierstrass}
\[
  y^2 = x^3 + Ax + B.
\]
This equation is not unique, since once we got it, one can
still apply changes of variables of the form $y \leftarrow u^3 y,\
x \leftarrow u^2 x$ and dividing both sides by $u^6$. This leads
to another Weierstrass equation $y^2 = x^3 + A'x + B'$ where
$A' = \frac{A}{u^4}$ and $B' = \frac{B}{u^6}$.
Let us introduce
\[
  j \eqdef 1728 \frac{4A^3}{4A^3+27B^2}\cdot
\]
This quantity is well--defined since one can prove that
the denominator $4A^3 + 27 B^2$ is zero if and only if the corresponding curve
is singular (see \cite[Prop.~III.1.4(a)(i)]{silverman2009book}). Hence, $j$ is well--defined
for any elliptic curve since, by definition, such curves are smooth.
Moreover, $j$ is left invariant by the previous change of variables
and one can show that, once we obtained a Weierstrass equation,
the only changes of variables preserving the Weierstrass equation structure
are the aforementioned ones.

We conclude this subsection by the following statements asserting that
the $j$--invariant characterises an elliptic curve over $\Kbar$ in a
unique manner. The proof is omitted and can be found in
\cite[Prop.~III.1.4(b-c)]{silverman2009book}.

\begin{proposition}
  Two elliptic curves are isomorphic over $\Kbar$ if and only if
  they have the same $j$--invariant. Conversely, given $j_0 \in \Kbar$,
  there exists an elliptic curve $\E$ over $\K (j_0)$ with $j$--invariant
  $j_0$.
\end{proposition}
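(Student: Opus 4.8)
The plan is to prove both directions essentially by reducing to explicit Weierstrass equations and tracking the effect of the only admissible changes of variables, $x \leftarrow u^2 x$, $y \leftarrow u^3 y$ for $u \in \Kbar^\times$.

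First I would address the easy direction: if $\E_1 \cong \E_2$ over $\Kbar$, then the isomorphism carries $O_{\E_1}$ to $O_{\E_2}$ (up to composing with a translation, which is an automorphism of an elliptic curve), hence carries the space $L(2O_{\E_1})$ to $L(2O_{\E_2})$ and $L(3O_{\E_1})$ to $L(3O_{\E_2})$. Thus it transports one Weierstrass presentation to another Weierstrass presentation of the same abstract curve, and by the already-noted fact that the only changes of variables preserving Weierstrass form are $(A,B) \mapsto (u^{-4}A, u^{-6}B)$, one computes directly that $j = 1728\,\frac{4A^3}{4A^3+27B^2}$ is unchanged: both numerator and denominator scale by $u^{-12}$. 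So the $j$-invariants agree.

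For the converse within the first statement — two elliptic curves with the same $j$-invariant are isomorphic over $\Kbar$ — I would fix Weierstrass equations $y^2 = x^3 + Ax + B$ and $y^2 = x^3 + A'x + B'$ with equal $j$, and exhibit $u \in \Kbar^\times$ with $A' = u^{-4}A$, $B' = u^{-6}B$. The main obstacle is the usual case analysis forced by the degenerate values $j = 0$ and $j = 1728$: when $A, A' \neq 0$ and $B, B' \neq 0$ (generic $j$), equality of $j$ gives $A^3/B^2 = A'^3/B'^2$, and choosing $u$ with $u^2 = (A/A')(B'/B)$ (a square root exists in $\Kbar$) does the job after a short check; when $j = 1728$ one has $B = B' = 0$ and picks $u$ with $u^4 = A/A'$; when $j = 0$ one has $A = A' = 0$ and picks $u$ with $u^6 = B/B'$. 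In every case $\Kbar$ being algebraically closed guarantees the needed root, and the resulting map of Weierstrass curves is an isomorphism by the previous theorem, hence $\E_1 \cong \E_2$.

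Finally, for the existence statement, given $j_0 \in \Kbar$ I would write down an explicit elliptic curve over $\K(j_0)$ with that invariant: for $j_0 \notin \{0, 1728\}$ take
\[
  y^2 = x^3 + \frac{3 j_0}{1728 - j_0}\, x + \frac{2 j_0}{1728 - j_0},
\]
for $j_0 = 0$ take $y^2 = x^3 + 1$, and for $j_0 = 1728$ take $y^2 = x^3 + x$; in each case a direct substitution into the formula for $j$ verifies the $j$-invariant is $j_0$ and that $4A^3 + 27B^2 \neq 0$, so the curve is smooth of genus $1$ with the rational point $(0:1:0)$, hence an elliptic curve over $\K(j_0)$. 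I expect the only genuinely fiddly part of the whole argument to be the bookkeeping in the generic case of the converse — making sure the chosen square root $u$ simultaneously rescales $A$ and $B$ correctly rather than just one of them — which is why the relation $A^3/B^2 = A'^3/B'^2$ must be used, not merely equality of $j$.
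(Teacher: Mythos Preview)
The paper does not actually prove this proposition: it omits the proof and simply cites \cite[Prop.~III.1.4(b-c)]{silverman2009book}. Your argument is the standard one (and essentially what Silverman does): reduce to short Weierstrass form, use that the only isomorphisms preserving that form are $(x,y)\mapsto(u^2x,u^3y)$, and then do the case split on $j_0\in\{0,1728\}$ versus generic $j_0$ both for the ``same $j$ $\Rightarrow$ isomorphic'' direction and for the explicit existence statement. So there is no divergence of method to report --- you have supplied a proof where the paper supplies only a reference.

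One small computational slip worth flagging, exactly at the spot you yourself called ``fiddly'': with your conventions $A'=u^{-4}A$ and $B'=u^{-6}B$, you get $u^4=A/A'$ and $u^6=B/B'$, hence
\[
  u^2 \;=\; \frac{u^6}{u^4} \;=\; \frac{B/B'}{A/A'} \;=\; \frac{B A'}{B' A},
\]
which is the reciprocal of the $(A/A')(B'/B)$ you wrote. With your formula one would need $AB'^2=A'B^2$, whereas equality of $j$ only gives $A^3B'^2=A'^3B^2$. Swapping to $u^2 = (A'/A)(B/B')$ (or, equivalently, reversing the direction of the isomorphism) makes both checks $u^4=A/A'$ and $u^6=B/B'$ go through directly from the relation $(A/A')^3=(B/B')^2$. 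Your explicit curves for the existence part are correct and yield the claimed $j$-invariant with nonvanishing discriminant in each case; the paper's subsequent remark uses a different (long Weierstrass) model, but yours is equally valid in characteristic $\neq 2,3$.
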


\begin{remark}
  Note that two elliptic curves defined over $\K$ may be isomorphic
  over $\Kbar$ without being isomorphic over $\K$. For instance,
  suppose that $-1$ is not a square in $\K$. Then, between the curves
  with equation
  \[
    y^2 = x^3+ Ax + B \quad \text{and} \quad -y^2 = x^3 + Ax + B
  \]
  are related by the isomorphism defined over $\Kbar$ given by
  $(x,y) \mapsto (x, \sqrt{-1}\ y)$ but there may not exist an
  isomorphism defined over $\K$. Such curves are said to be a {\em
    twist} of each other.
\end{remark}

\begin{remark}
  Starting from a $j$--invariant $j_0 \in \Kbar$, an explicit equation
  for an elliptic curve with this $j$--invariant is given by
  \[
    y^2 + xy = x^3 - \frac{36}{j_0-1728} x - \frac{1}{j_0 - 1728} \quad
    \text{if}\quad j_0 \neq 0, 1728
  \]
  and
  \[
    y^2+y = x^3 \ \ \text{if}\ \ j_0 = 0 \qquad \text{and}
    \qquad y^2 = x^3+x \ \ \text{if}
    \ \ j = 1278.
  \]
\end{remark}

\subsection{The group law}\label{ss:group_law}
A remarkable feature of such curves is that they naturally have a
group structure. Namely, given an elliptic curve $\E$ over $\K$, the
set $\E(\K)$ has a structure of abelian group. More generally, for any
algebraic extension $\LL$ of $\K$, then $\E (\LL)$ has an abelian
group structure too. This group structure is usually represented with
a so--called {\em chord and tangent process} as represented by Figure~\ref{fig:addition_law}.
It can be described as follows:
\begin{itemize}
\item[\textbullet] Given two points $P, Q \in \E(\LL)$, draw the line
  $\Lcal \subseteq \A^2$ (or $\P^2$) joining them. If $P=Q$
  let $\Lcal$ be the tangent line of $\E$ at $P$.
\item[\textbullet] Since the curve has degree $3$, its intersection
  with $\Lcal$ and $\E$ has $3$ points counted with multiplicity and
  hence either $\Lcal$ is vertical and then the third point is
  $R_0 = O_{\E}$ or denote by $R_0 = (x_{R_0}, y_{R_0})$ be the
  third\footnote{Possibly $R_0$ equals $P$ or $Q$. This is the reason
    why we mentioned 3 points {\em counted with multiplicity}.
    If the intersection multiplicity of $\Lcal$ with $\E$ at $P$ (resp. $Q$)
  is $2$ then, we set $R_0 \eqdef P$ (resp. $Q$).} point
  of intersection of this line with $\E$.
\item[\textbullet] Let $R$ be the point with coordinates
  $(x_{R_0}, -y_{R_0})$ if $R_0 \neq O_{\E}$ and the point $O_\E$
  otherwise.  This point is defined to be the {\em sum of $P$ and $Q$}.
\end{itemize}

\begin{figure}
  \centering
  \includegraphics[scale = .4]{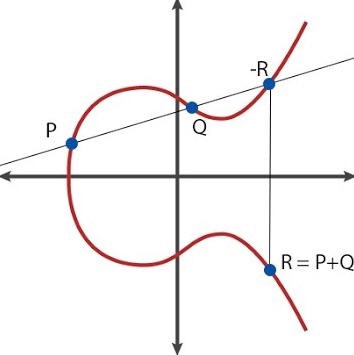}
  \caption{The addition law on an elliptic curve (Source:
      Cornelius Sch\"atz blog)}
  \label{fig:addition_law}
\end{figure}

\begin{exercise}\label{ex:group_law}
  \begin{enumerate}[(1)]
  \item\label{item:three_points} Prove that the intersection of $\E$ with a line
    is made of $3$ points of $\P^2$ possibly counted with multiplicity;
  \item Prove that $R_0 \in \E(\LL)$;
  \end{enumerate}
\end{exercise}

This description is simple to understand but it is not completely
obvious to prove that it provides a group structure. In particular,
the associativity is far from being obvious using this description.
Here we will show that this group structure can also be understood as
a group law inherited from that of some quotient of the divisor group.
Indeed, denote by $\Div_\K (\E)$ the group of rational divisors on
$\E$ and by $\Div_\K^0 (\E)$ the subgroup of divisors of degree
$0$. Finally denote by $\text{Princ}_\K (\E)$ the group of principal
divisors, \ie{} of divisors of the form $(f)$ where
$f \in \K (\E) \setminus \{0\}$.  From Remark~\ref{rem:princ=rat}
together with Proposition~\ref{prop:degree_principal},
$\text{Princ}_\K (\E)$ is a subgroup of $\Div_\K^0 (\E)$ and the quotient is
denoted
\[
  \Pic_\K^0 (\E) \eqdef \Div_\K^0(\E) / \text{Princ}_\K (\E).
\]

\begin{proposition}
  Let $\E$ be an elliptic curve over $\K$.
  Any element of $\Pic_\K^0 (\E)$ has a representative of the form $P - O_{\E}$
  where $P$ is some rational point in $\E(\K)$.
\end{proposition}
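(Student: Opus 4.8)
The plan is to show that every degree-zero divisor class on $\E$ contains a representative of the very simple shape $P - O_{\E}$, using Riemann--Roch together with the fact that $\E$ has genus $1$. So let $D \in \Div_\K^0(\E)$ be an arbitrary rational divisor of degree $0$; I want to produce a rational point $P \in \E(\K)$ and a function $f \in \K(\E)^\times$ with $D = (f) + (P - O_{\E})$, equivalently $D - P + O_{\E}$ principal. The natural thing to examine is the divisor $D + O_{\E}$, which has degree $1$. Since $\deg(D + O_{\E}) = 1 > 0 = 2g - 2$ for $g = 1$, Riemann--Roch (Theorem~\ref{thm:RR}) applies in its equality form and gives $\dim_\K L(D + O_{\E}) = \deg(D + O_{\E}) + 1 - g = 1$.

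From that one-dimensional space I pick a nonzero function $f \in L(D + O_{\E})$. By definition of the Riemann--Roch space, $(f) + D + O_{\E} \geq 0$, i.e.\ the divisor $D' \eqdef (f) + D + O_{\E}$ is effective; and by Proposition~\ref{prop:degree_principal} principal divisors have degree $0$, so $\deg D' = \deg(f) + \deg D + 1 = 0 + 0 + 1 = 1$. An effective divisor of degree $1$ is exactly a single geometric point with multiplicity one, so $D' = P$ for some $P \in \E(\Kbar)$. Rearranging, $D = P - O_{\E} - (f) = (P - O_{\E}) + (-f)$, and $(-f) = (1/f) = -(f)$ is principal (note $(f)$ and $(1/f)$ differ by a sign, either way principal), so $D$ is equivalent to $P - O_{\E}$ in $\Pic_\K^0(\E)$, as desired.

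The one genuine subtlety — and the step I would treat most carefully — is checking that the point $P$ is \emph{rational}, i.e.\ $P \in \E(\K)$ and not merely in $\E(\Kbar)$. The point is that $D$ was chosen rational and $f \in \K(\E)$, so $(f)$ is rational by Remark~\ref{rem:princ=rat}; hence $D' = (f) + D + O_{\E}$ is a rational divisor (stable under $\Gal(\Kbar/\K)$). A rational effective divisor of degree $1$ must be a single closed point of degree $1$, and a closed point of degree $1$ is precisely a $\K$-rational point — its Galois orbit is a singleton. (One should also note $f$ can be taken in $\K(\E)$ rather than $\Kbar(\E)$: the space $L(D+O_{\E})$ over $\K$ is $1$-dimensional, so any nonzero element of it already has $\K$-coefficients; here the rationality of $D$ is what makes $L(D+O_\E)$ defined over $\K$ in the first place.) This Galois-descent bookkeeping is the only place where "rational divisor" versus "geometric divisor" matters, and it is exactly why the proposition is stated with $P \in \E(\K)$.

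A final remark on the role of $g = 1$: genus $1$ is what forces $\dim L(D + O_{\E})$ to be \emph{exactly} $1$ rather than merely $\geq 1$; combined with the degree count this pins $D'$ down to a single point. On $\P^1$ ($g=0$) the analogous space would be $2$-dimensional and one gets the familiar fact that $\Pic^0(\P^1) = 0$; for $g \geq 2$ the space can be $0$-dimensional and no such uniform representative exists. So the proof really is a one-line application of Riemann--Roch once the setup is in place, and essentially all the care goes into the rationality of $P$.
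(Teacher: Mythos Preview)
Your proof is correct and follows essentially the same route as the paper's: apply Riemann--Roch to the degree-$1$ divisor $D + O_{\E}$, pick a nonzero $f$ in the resulting one-dimensional space, and observe that $(f) + D + O_{\E}$ is effective of degree $1$, hence a single point $P$. In fact your treatment is more careful than the paper's on the one nontrivial point---the rationality of $P$---which the paper simply asserts without comment; your Galois-invariance argument via Remark~\ref{rem:princ=rat} is exactly the right justification.
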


\begin{proof}
  Let $G \in \Div_\K^0 (\E)$. The divisor $G + O_{\E}$ has degree $1$ and, from
  Riemann--Roch Theorem $L(G+O_{\E})$ has dimension $1$. Thus, there
  exists $f \in L(G+O_{\E}) \setminus \{0\}$. By definition of $L(G + O_\E)$,
  the function $f$ satisfies
  \[
    (f) + G + O_{\E} \geq 0.
  \]
  The latter divisor is positive with degree $1$ and hence equals some
  rational point $P$. Thus $(f)+ G = P-O_{\E}$, which entails that $G$
  and $P-O_{\E}$ have the same class in $\Pic_\K^0(\E)$.
\end{proof}

\begin{theorem}
  Let $P, Q \in \E(\K)$ and $R$ be the sum of $P+ Q$ according to the
  previously introduced addition law. Then, the classes of $R-O_{\E}$
  and $(P-O_{\E}) + (Q-O_{\E})$ are the same in $\Pic_\K^0(\EE)$.
\end{theorem}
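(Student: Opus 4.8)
The plan is to transport the chord-and-tangent construction into the projective Weierstrass model and to read off the relevant principal divisors directly from the lines that occur. Fix the plane model $Y^2Z = X^3 + AXZ^2 + BZ^3$ of $\E$, in which $O_{\E} = (0:1:0)$. The heart of the argument is the following observation: if $\Lcal \subseteq \P^2$ is a line defined over $\K$, cut out by a linear form $\ell(X,Y,Z)$, and if $\Lcal$ meets $\E$ at the points $A, B, C$ counted with multiplicity (there are exactly three, by Exercise~\ref{ex:group_law}), then the rational function $\ell/Z$ lies in $\K(\E)$ and has divisor
\[
  (\ell/Z) = A + B + C - 3\,O_{\E}.
\]
Equivalently, $(A - O_{\E}) + (B - O_{\E}) + (C - O_{\E}) = 0$ in $\Pic_\K^0(\E)$. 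To see this, note that the zeros of $\ell/Z$ on $\E$ are exactly the intersection points of $\Lcal$ with $\E$, with the intersection multiplicities as orders of vanishing, while its poles come from the zeros of $Z$ on $\E$; and substituting $Z = 0$ in the Weierstrass equation forces $X^3 = 0$, so the line at infinity meets $\E$ only at $O_{\E}$, with multiplicity $3$. As a consistency check, the divisor above has degree $0$, in accordance with Proposition~\ref{prop:degree_principal}.

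Granting the observation, the theorem follows by applying it twice. First apply it to the line $\Lcal$ through $P$ and $Q$ — the tangent line of $\E$ at $P$ when $P = Q$, which is defined over $\K$ in either case since $P, Q \in \E(\K)$. Two of the three intersection points of $\Lcal$ with $\E$ are the $\K$-rational points $P$ and $Q$, so the third, $R_0$, is $\K$-rational too, and
\[
  (P - O_{\E}) + (Q - O_{\E}) + (R_0 - O_{\E}) = 0 \qquad \text{in } \Pic_\K^0(\E).
\]
Then, assuming $R_0 \neq O_{\E}$, apply the observation to the vertical line through $R_0$, whose three intersection points with $\E$ are $R_0$, the point $R$ constructed from $R_0$, and $O_{\E}$ (a one-line substitution); this gives $(R_0 - O_{\E}) + (R - O_{\E}) + (O_{\E} - O_{\E}) = 0$, i.e. the class of $R_0 - O_{\E}$ equals minus the class of $R - O_{\E}$. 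Substituting into the previous relation yields $(P - O_{\E}) + (Q - O_{\E}) = R - O_{\E}$ in $\Pic_\K^0(\E)$, which is the assertion.

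It remains to treat the degenerate configurations, where one proves the identity directly rather than in two steps: when the line through $P$ and $Q$ is vertical (so $R_0 = O_{\E}$ and, by definition, $R = O_{\E}$), the observation gives $(P - O_{\E}) + (Q - O_{\E}) + (O_{\E} - O_{\E}) = 0$, which is exactly the claim since $R = O_{\E}$; the cases $P = O_{\E}$ or $Q = O_{\E}$ are similar and reduce to the identity property of $O_{\E}$. The only genuine work in the whole proof sits in the key observation, and the delicate point there is the bookkeeping of intersection multiplicities: checking that the order of vanishing of $\ell/Z$ at a point of $\Lcal \cap \E$ equals the intersection multiplicity of $\Lcal$ and $\E$ there, and that the line at infinity is tangent to the Weierstrass model to order exactly $3$ at $O_{\E}$ — that is, that $O_{\E}$ is an inflection point of the model.
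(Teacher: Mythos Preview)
Your proof is correct and follows essentially the same approach as the paper: both read off the needed principal divisor from the linear forms cutting out the chord $\Lcal$ and the vertical line $\Lcal'$. The paper is marginally more economical in forming the single quotient $H/H'$ so that $R_0$ cancels at once, whereas you normalise each line against $Z$ and combine two relations, but this is cosmetic; your treatment of the degenerate configurations is in fact more explicit than the paper's.
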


\begin{proof}
  From Exercise~\ref{ex:group_law}~(\ref{item:three_points}), there is
  a point $R_0$ which is contained in the line $\Lcal$ joining $P$ and
  $Q$.  Moreover $R, R_0$ are contained in a vertical line $\Lcal'$,
  the verticality entails that, projectively speaking, $R_0, R$ and
  $O_{\E}$ are in the projective closure of the line $\Lcal'$.  Let
  $H(X,Y,Z)$ and $H'(X, Y, Z)$ be homogeneous polynomials of degree
  $1$ providing equations of the projective closures of $\Lcal$ and
  $\Lcal'$ respectively. The rational function
  $h \eqdef \frac{H}{H'} \in \K (\E)$ has divisor
  \[
    (h) = (P+Q+R_0) - (R_0 + R + O_{\E}) = (P-O_{\E}) + (Q-O_{\E}) - (R - O_{\E}),
  \]
  which concludes the proof.
\end{proof}

As a conclusion, we have the following bijection:
\[
  \map{\E(\K)}{\Pic_\K^0 (\E)}{P}{P-O_{\E} \mod \text{Princ}_\K(\E)}.
\]
Via this bijection, we can equip $\E(\K)$ with a group structure
whose law is nothing but the previously described chord--tangent one.
Therefore, $\E(\K)$ equipped with the chord--tangent law has a group
structure which is isomorphic to $\text{Pic}_\K^0(\E)$.

\begin{remark}
  Here again, note that we discussed about the group structure
  of the set of rational points $\E(\K)$ but actually, for any algebraic
  extension $\LL/\K$, the set $\E(\LL)$ has also a group structure with
  $\E (\LL)$ as a subgroup. In particular, the whole set of geometric
  points $\E (\Kbar)$ has a structure of abelian group.
\end{remark}

\subsection{Torsion and isogenies}
Once we know that elliptic curves are equipped with an abelian group
structure it is of course natural to study the morphisms relating
these curves. For this sake, we first need to discuss some specific
subgroups of points of elliptic curves: their torsion subgroups.

\subsubsection{Torsion subgroups}
Given an elliptic curve $\E$ and an integer $\ell$, one is interested in the
group
\[
  \E [\ell] \eqdef \{P \in \E(\Kbar) ~|~ \ell P = 0\},
\]
where $\ell P$ means ``$P+ \cdots + P$'' (added $\ell$ times).
Interestingly, this group has a natural structure of $\Z / \ell \Z$--module,
and, in particular, is an $\F_\ell$--vector space when $\ell$ is prime.
The next theorem asserts that this space has always dimension $2$
when $\ell$ is prime to the characteristic. The proof of the next statement
is omitted.

\begin{theorem}\label{thm:torsion_of_elliptic}
  Let $\E$ be an elliptic curve over $\K$. Let $\ell$ be an integer.
  If $\ell$ is prime to the characteristic of $\K$, then
  \[
    \E [\ell] \simeq \Z / \ell \Z \times \Z / \ell \Z.
  \]
  Else, if $p$ denotes the characteristic of $\K$ and $p \neq 0$, then
  \[
    \E [p] \simeq \left\{
      \begin{array}{cc}
        \text{either} & \Z/p\Z  \\
        \text{or} & 0.
      \end{array}
      \right.
    \]
    In the former case the curve is said to be {\em ordinary}, in the latter
    it is said to be {\em supersingular}.
\end{theorem}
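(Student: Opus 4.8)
The plan is to analyse the multiplication-by-$\ell$ endomorphism $[\ell]\colon\E\to\E$, $P\mapsto\ell P$, whose kernel on geometric points is exactly $\E[\ell]$. Since the group law is given by rational functions, $[\ell]$ is a rational map from the smooth projective curve $\E$ to itself, hence regular by Proposition~\ref{prop:on_rational_maps}, and --- provided it is non-constant, which is readily checked from the explicit division-polynomial formulas in a Weierstrass model --- it is surjective with a well-defined degree $\deg[\ell]$. The crucial quantitative input I would establish is that $\deg[\ell]=\ell^{2}$, either directly from those division polynomials, or more conceptually from the fact that $n\mapsto\deg[n]$ is a positive quadratic form on $\Z$ (the bilinearity of $(\varphi,\psi)\mapsto\deg(\varphi+\psi)-\deg\varphi-\deg\psi$ on $\End(\E)$ being a form of the theorem of the cube), which together with $\deg[1]=1$ forces $\deg[n]=n^{2}$.

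Next I would bring in separability via the invariant differential $\omega$ on $\E$, the generator of the one-dimensional space of regular differentials: one has $[\ell]^{*}\omega=\ell\,\omega$. When $\ell$ is prime to the characteristic of $\K$ this pullback is nonzero, so $[\ell]$ is separable, and therefore $\#\ker[\ell]=\deg[\ell]=\ell^{2}$. Thus $\E[\ell]$ is a finite abelian group of order $\ell^{2}$ killed by $\ell$. To pin down its isomorphism type I would combine the structure theorem for finite abelian groups with the observation that the same argument applied to every prime $r\mid\ell$ yields $\#\E[r]=r^{2}$, \ie{} the $r$-torsion of $\E[\ell]$ has rank exactly $2$; writing $\E[\ell]\simeq\Z/d_{1}\Z\times\cdots\times\Z/d_{k}\Z$ with $d_{1}\mid\cdots\mid d_{k}\mid\ell$ and $\prod_{i}d_{i}=\ell^{2}$, these constraints leave only $\E[\ell]\simeq\Z/\ell\Z\times\Z/\ell\Z$.

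For the characteristic-$p$ case the map $[p]$ still has degree $p^{2}$, but now $[p]^{*}\omega=p\,\omega=0$, so $[p]$ is inseparable. I would then factor $[p]=V\circ F$ through the $p$-power Frobenius $F\colon\E\to\E^{(p)}$, which is purely inseparable of degree $p$; the separable degree of $[p]$ therefore equals that of $V$, a map of degree $p$, hence is $1$ or $p$. Since $\#\E[p]$ is precisely this separable degree, one gets $\E[p]\simeq 0$ or $\Z/p\Z$.

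The hard part will be the two inputs $\deg[\ell]=\ell^{2}$ and $[\ell]^{*}\omega=\ell\,\omega$: both rest on a little theory of isogenies and of differentials on curves that has not yet been developed at this point in the notes, and a self-contained treatment would detour either through the (concrete but computational) division polynomials or through the (clean but more abstract) theorem of the cube. Everything afterwards --- the structure-theorem bookkeeping and the Frobenius factorisation --- is routine.
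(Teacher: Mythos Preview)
The paper does not actually prove this theorem: the sentence immediately preceding it reads ``The proof of the next statement is omitted'', and the surrounding section consistently defers to Silverman's book for such facts. So there is no proof in the paper to compare against.

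That said, your sketch is correct and is essentially the standard argument one finds in Silverman (\S~III.4--III.6 and Cor.~III.6.4): compute $\deg[\ell]=\ell^{2}$, use the invariant differential to decide separability, read off $\#\E[\ell]$ as the separable degree, and finish with the structure theorem for finite abelian groups (the $r$-rank argument you give does force two invariant factors, each equal to $\ell$). The characteristic-$p$ analysis via the factorisation $[p]=V\circ F$ through Frobenius is likewise the standard route. Your caveat is well placed: the two inputs $\deg[n]=n^{2}$ and $[\ell]^{*}\omega=\ell\,\omega$ require either the division-polynomial machinery or the quadraticity of degree on $\End(\E)$, neither of which is developed in these notes --- which is presumably why the author chose to omit the proof.
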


\subsubsection{Isogenies}
Given two elliptic curves $\E, \E'$, an isogeny
$\phi : \E \rightarrow \E'$ is a morphism between these curves sending
the neutral element $O_\E$ onto $O_{\E'}$. Such a map is always
surjective from $\E (\Kbar)$ into $\E'(\Kbar)$. A remarkable property
is that such a map is necessarily a morphism of groups (see
\cite[Thm.~III.4.8]{silverman2009book}.  As any morphism of curves, an
isogeny $\phi : \E \rightarrow \E'$ induces a field extension
$\K(\E')/\K(\E)$. The {\em degree} of the isogeny is the degree of the
field extension and the isogeny is said to be {\em separable} if the
field extension is separable too.  An isogeny of degree $\ell$ will
usually be referred to as {\em an $\ell$--isogeny}.

\begin{example}
  Taken from \cite[Ex.~III.4.5]{silverman2009book}.
  Let $a, b\in \K$, $b \neq 0$ and $a^2-4b \neq 0$. Consider the curves with equations:
  \begin{eqnarray*}
    \E : y^2 &=& x^3+ax^2+bx\\
    \E': y^2 &=& x^3-2ax^2 + (a^2-4b)x.
  \end{eqnarray*}
  The following map is a $2$--isogeny:
  \begin{equation}\label{eq:ex_iso}
    \map{\E}{\E'}{(x,y)}{\left(\frac{y^2}{x^2}, \frac{y(b-x^2)}{x^2} \right).}
  \end{equation}
\end{example}

\begin{exercise}
  Check that the map~(\ref{eq:ex_iso}) actually sends $\E$ into $\E'$.
  {\em Hint. Using a computer algebra software may be helpful for this
    exercise.}
\end{exercise}

\begin{example}
  Another example for isogenies of elliptic curves over a finite field $\Fq$
  of characteristic $p$ is the Frobenius map
  \[
    \map{\E}{\E^{(p)}}{(x,y)}{(x^p,y^p).}
  \]
  This isogeny is purely inseparable and sends the curve $\E$
  with Weierstrass equation $y^2 = x^3 + Ax + B$ onto
  the curve $\E^{(p)}$ of equation $y^2 = x^3 + A^p x + B^p$.
  If $\E$ is defined over $\Fq$ (\ie{} if $A, B \in \Fq$) then
  the Frobenius map is an endomorphism of $\E$.
\end{example}

\begin{example}
  For any $m > 0$ prime to the characteristic and any elliptic curve
  $\E$ over $\K$, the map $P \mapsto mP$ is an isogeny from $\E$ into
  itself. Its kernel is $\E[m]$. 
\end{example}

A separable isogeny of degree $\ell$, regarded as a group morphism
$\E(\Kbar) \rightarrow \E (\Kbar)$ is surjective with a finite kernel
of cardinality $\ell$. Its kernel is a
subgroup of $\E [\ell]$. 

\begin{theorem}[{\cite[Prop.~III.4.12]{silverman2009book}}]
  \label{thm:isogenies_as_quotients}
  For any finite subgroup $K \subseteq \E (\Kbar)$, there exists an elliptic
  curve $\E'$ defined over $\Kbar$ and an isogeny $\phi : \E \rightarrow \E'$
  such that $\ker \phi = K$. The curve $\E'$ is sometimes denoted
  as $\E/K$.
\end{theorem}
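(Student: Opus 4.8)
The plan is to construct the isogeny by building its target curve as a quotient and then exhibiting the quotient map, using the standard tool of invariant functions under the finite group action induced by translation by elements of $K$. First I would recall that $K$ acts on $\E$ by translations $\tau_S : P \mapsto P + S$ for $S \in K$; since $K$ is finite, these translations generate a finite group of automorphisms of the curve $\E$ (as an abstract curve, forgetting the marked point). Passing to function fields, this gives a finite group $K$ acting on $\Kbar(\E)$, and the fixed field $\Kbar(\E)^K$ is a subfield over which $\Kbar(\E)$ is a Galois extension of degree $\#K$. By the standard equivalence between curves and their function fields, there is a smooth projective curve $\E'$ with $\Kbar(\E') = \Kbar(\E)^K$, and the inclusion of function fields corresponds to a degree-$\#K$ morphism $\phi : \E \to \E'$.

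The next step is to pin down the ramification of $\phi$ and compute the genus of $\E'$ via Riemann--Hurwitz (Theorem~\ref{thm:RH}). Since $K$ acts by translations, which are fixed-point-free on $\E(\Kbar)$ (a translation $\tau_S$ with $S \neq O_\E$ has no fixed point on the group), the morphism $\phi$ is unramified everywhere. Riemann--Hurwitz then gives $2g_{\E} - 2 = \#K \cdot (2g_{\E'} - 2)$, i.e. $0 = \#K\cdot(2g_{\E'}-2)$, so $g_{\E'} = 1$. Thus $\E'$ is a smooth projective curve of genus $1$. To make it an elliptic curve we must exhibit a rational point: take $O_{\E'} \eqdef \phi(O_\E)$. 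Then $\phi$ is a morphism of genus-$1$ curves sending $O_\E$ to $O_{\E'}$, hence by definition an isogeny, and by the cited fact (\cite[Thm.~III.4.8]{silverman2009book}) it is automatically a group homomorphism. Its kernel is $\phi^{-1}(O_{\E'})$, which by Galois theory is exactly the $K$-orbit of $O_\E$, namely $\{S + O_\E : S \in K\} = K$.

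One subtlety I would address: the construction naturally lives over $\Kbar$, which is why the statement only claims $\E'$ is defined over $\Kbar$ — the group $K$ need not be Galois-stable, so the fixed field need not descend to $\K$. Another point worth a remark is that a priori $\phi$ is only a rational map between the curves, but since $\E$ is smooth, Proposition~\ref{prop:on_rational_maps}~(i) upgrades it to a morphism, and non-constancy plus Proposition~\ref{prop:on_rational_maps}~(ii) gives surjectivity. The main obstacle — or rather the one place where genuine input is needed rather than formal nonsense — is the claim that $\Kbar(\E)$ is a curve's function field and that a finite group of curve automorphisms yields a quotient curve with fixed function field; this is the anti-equivalence between smooth projective curves and function fields of transcendence degree $1$, together with the theory of quotients by finite groups, and I would simply invoke it (it is the kind of foundational fact the excerpt has been freely admitting). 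Everything downstream — unramifiedness of translations, the Riemann--Hurwitz genus computation, identification of the kernel — is then routine.
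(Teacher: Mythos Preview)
Your argument is correct and is essentially the standard proof (indeed, it is the one given in Silverman at the cited reference): quotient by the finite group of translations, observe that translations are fixed-point-free so the cover is unramified, apply Riemann--Hurwitz to get genus $1$, and mark $\phi(O_\E)$ as the origin. Note, however, that the paper does not give its own proof of this statement at all --- it simply cites \cite[Prop.~III.4.12]{silverman2009book} and moves on --- so there is nothing to compare against beyond confirming that your sketch matches the source being invoked.
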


\begin{remark}
  In the previous statement, further precision can be given on the
  field of definition of $\E'$ and $\phi$. The field of definition of
  the group $K$ is the smallest extension $\LL/\K$ such that
  $K$ is globally invariant under the action of $\Gal (\Kbar /\LL)$.
  The field of definition of $\phi$ and $\E'$ is that of $K$.

  Note that the field of definition is {\bf not} the smallest field
  of definition of any geometric point of $K$. For instance,
  there may be non rational $m$--torsion points while $\E[m]$
  is defined over $\K$.
\end{remark}

Finally, even if an isogeny $\phi : \E \rightarrow \E'$ of degree $m>1$
is not an isomorphism in general, and hence has no inverse, it has a
so--called {\em dual isogeny} $\hat{\phi}$ which is the unique isogeny
$\hat{\phi} : \E' \rightarrow \E$ such that
\[
  \phi \circ \hat{\phi} : \map{\E}{\E}{P}{mP} \quad
  \text{and}\quad
  \hat{\phi}\circ \phi : \map{\E'}{\E'}{Q}{mQ.}
\]
The existence and uniqueness of this map are proven in
\cite[\S~III.6]{silverman2009book}.

\begin{example}
  In the case of a separable isogeny $\phi : \E \rightarrow \E'$ of
  degree $m$, its kernel is a group with $m$ elements. By Lagrange
  Theorem, such a finite group is of $m$--torsion and hence
  $\ker \phi \subseteq \E [m]$. Then $\phi (\E [m])$ is a finite
  subgroup and, from Theorem~\ref{thm:isogenies_as_quotients}, there
  is an isogeny $\varphi : \E' \rightarrow \E' / \phi(\E[m])$, which
  is nothing but the dual isogeny of $\phi$. In particular
  $\E'/ \phi(\E [m])\simeq \E/\E[m] \simeq \E$.
  The last isomorphism is induced by the map $P \mapsto mP$.
\end{example}

All the previously introduced notions: torsion, isogenies, dual isogenies
will be re--discussed and better illustrated in the subsequent section about
elliptic curves over $\C$. In this context, these notions will
be much easier to visualise.

\subsection{Elliptic curves over the complex numbers}
\label{ss:elliptic_over_C}
As already explained earlier, complex elliptic curves is not the topic
of this lecture. It is however necessary to discuss a bit about them.
In order not to spend too much time on the topic,
many proofs of non trivial statements are omitted and replaced by
precise references. Clearly, the summary to follow is strictly
included in Chapter VI of Silverman's book \cite{silverman2009book}.

\subsubsection{Lattices and the Weierstrass $\wp$ function}
In the complex setting, an elliptic curve is isomorphic to a complex
torus. Namely, a lattice of $\C$ is a discrete subgroup $\Lambda$ with compact
quotient and it is well--known that such a group is of the form
\[\Lambda = \Z \omega_1 \oplus \Z \omega_2\]
where $\omega_1, \omega_2$ are linearly independent over $\R$.
The relation between a torus $\C/ \Lambda$ and an elliptic curve
is far from being obvious and the key for connecting these two objects
is Weierstrass $\wp_\Lambda$ function defined as
\[
  \wp_{\Lambda}(z) \eqdef \frac{1}{z^2} + \sum_{\omega \in \Lambda \setminus \{0\}}
  \left(\frac{1}{(z-\omega)^2} - \frac{1}{\omega^2}\right)\cdot
\]
This is a meromorphic function with pole locus $\Lambda$ which is
$\Lambda$--periodic, \ie{} for any $z \in \C \setminus \Lambda$ and
$\omega \in \Lambda$, $\wp(z + \omega) = \wp(z)$. The proof of
convergence of the series is left to the reader.

Note that, since $\wp$ is $\Lambda$--periodic, it passes to the
quotient and induces a meromorphic function on the torus
$\C/ \Lambda$.  The function $\wp$ is fundamental in the sense that
actually, any $\Lambda$--periodic meromorphic function can be
expressed as a rational function in $\wp$ and its derivative $\wp'$ as
explained by the following statement.

\begin{theorem}
There exist complex numbers $g_2, g_3$,
  which depend on $\Lambda$ such that
  \[
    \forall z \in \C \setminus \Lambda,\qquad
    \wp_{\Lambda}'(z)^2 = 4 \wp_{\Lambda}(z)^3 + g_2 \wp_{\Lambda}(z)
    + g_3.
  \]
\end{theorem}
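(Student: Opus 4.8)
The plan is to prove the identity by exhibiting a meromorphic function on the torus $\C/\Lambda$ which is \emph{elliptic} (i.e. $\Lambda$-periodic) and holomorphic everywhere, hence constant, and then reading off the constant. First I would compute the Laurent expansions of $\wp_\Lambda$ and $\wp_\Lambda'$ at the origin. Writing $G_k = \sum_{\omega \in \Lambda\setminus\{0\}} \omega^{-k}$ for the (even) Eisenstein sums, a term-by-term expansion of $\frac{1}{(z-\omega)^2} - \frac{1}{\omega^2}$ in powers of $z/\omega$ gives
\[
  \wp_\Lambda(z) = \frac{1}{z^2} + \sum_{k\geq 1} (2k+1) G_{2k+2}\, z^{2k} = \frac{1}{z^2} + 3 G_4 z^2 + 5 G_6 z^4 + \cdots,
\]
and differentiating,
\[
  \wp_\Lambda'(z) = -\frac{2}{z^3} + 6 G_4 z + 20 G_6 z^3 + \cdots.
\]
(The odd Eisenstein sums vanish by pairing $\omega$ with $-\omega$; convergence of the $G_{2k}$ for $k\geq 2$ is the same estimate used for convergence of the $\wp$ series itself.)

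Next I would set $g_2 \eqdef 60 G_4$ and $g_3 \eqdef 140 G_6$ and consider the function
\[
  F(z) \eqdef \wp_\Lambda'(z)^2 - 4\wp_\Lambda(z)^3 + g_2 \wp_\Lambda(z) + g_3.
\]
Squaring and cubing the Laurent expansions above, the principal parts at $z=0$ are arranged exactly to cancel: $\wp'^2$ contributes $4z^{-6} - 24 G_4 z^{-2} - 80 G_6 + \cdots$, while $4\wp^3$ contributes $4z^{-6} + 36 G_4 z^{-2} + 60 G_6 + \cdots$, so $\wp'^2 - 4\wp^3 = -60 G_4 z^{-2} - 140 G_6 + O(z^2)$, and adding $g_2 \wp = 60 G_4 z^{-2} + O(z^2)$ and the constant $g_3 = 140 G_6$ kills the pole and the constant term, leaving $F(z) = O(z^2)$ near $0$. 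Thus $F$ is holomorphic at $0$ with $F(0)=0$.

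Then I would observe that $F$ is holomorphic on all of $\C$: the only possible poles of $\wp_\Lambda$ and $\wp_\Lambda'$ lie on $\Lambda$, and by $\Lambda$-periodicity of both $\wp_\Lambda$ and $\wp_\Lambda'$ the function $F$ is $\Lambda$-periodic, so the behaviour at every lattice point is the same as at $0$, where we just checked it is holomorphic and vanishes. Hence $F$ descends to a holomorphic function on the compact torus $\C/\Lambda$, therefore is bounded, therefore constant by Liouville's theorem; since $F(0)=0$ the constant is $0$. This is precisely the claimed identity, valid for all $z\in\C\setminus\Lambda$. The one genuinely computational point — the step I expect to be the main obstacle — is carrying out the expansion of $\wp^3$ and $\wp'^2$ carefully enough to see that the $z^{-6}$, $z^{-4}$, $z^{-2}$ and $z^0$ coefficients all match after subtracting the linear-in-$\wp$ correction; everything else (convergence of the Eisenstein series, periodicity, Liouville) is routine or already available.
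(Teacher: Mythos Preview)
Your proof is correct and follows essentially the same approach as the paper: compute the Laurent expansion of $\wp_\Lambda$ and $\wp'_\Lambda$ at the origin, choose constants so that the relevant combination has no pole at $0$, then use $\Lambda$-periodicity and Liouville to conclude it is constant. The only difference is that you go slightly further by explicitly identifying $g_2,g_3$ in terms of the Eisenstein series $G_4,G_6$, whereas the paper argues purely existentially from the shape of the principal part and defers the explicit identification to a later remark.
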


\begin{proof}
The series
  \[
    \wp(z) - \frac{1}{z^2} = \sum_{\omega \in \Lambda \setminus \{0\}}
  \left(\frac{1}{(z-\omega)^2} - \frac{1}{\omega^2}\right)
\]
is even and vanishes at $0$. Hence, in the neighbourhood of $0$, its
Taylor series expansion depends only on $z^2$. Thus, we deduce that
$\wp_{\Lambda}$ has a Laurent series expansion at $0$ of the form
\[
  \wp_{\Lambda}(z) = \frac{1}{z^2} +  O(z^2),
\]
and
\[
  \wp'_{\Lambda}(z) = -\frac{2}{z^3} + O(z).
\]
Therefore, in a neighbourhood of $0$, 
$\wp'_{\Lambda}(z)^2 - 4 \wp_\Lambda(z)^3 = O(\frac{1}{z^2})$ and
there is a constant $g_2 \in \C$ such that
\begin{equation}\label{eq:poles_eqdif_Weierstrass}
  \wp'_{\Lambda}(z)^2 - 4 \wp_\Lambda(z)^3 -g_2 \wp_{\Lambda}(z) =
  O(1).
\end{equation}
The function
$\wp'_{\Lambda}(z)^2 - 4 \wp_\Lambda(z)^3 -g_2 \wp_{\Lambda}(z)$ is
$\Lambda$--periodic, meromorphic on $\C$ with pole locus contained in
$\Lambda$. From~(\ref{eq:poles_eqdif_Weierstrass}), it has no pole at
$0$ and, by $\Lambda$--periodicity has no pole at all and hence is
holomorphic on $\C$. Since it continuous and $\Lambda$--periodic on
$\C$, it is bounded, and by Liouville's theorem, it should be
constant. Therefore, there exists $g_3 \in \C$ such that
$\wp_{\Lambda}'(z)^2 = 4 \wp_{\Lambda}(z)^3 + g_2 \wp_{\Lambda}(z) +
g_3$.
\end{proof}

\begin{exercise}
  Prove that a $\Lambda$--periodic holomorphic function is bounded on
  $\C$.
\end{exercise}

A finer analysis of the series permits to estimate $g_2, g_3$ in terms
of $\Lambda$ and to prove that the equation $y^2 = 4x^3 + g_2 x + g_3$
is that of a smooth curve, and hence of an elliptic curve.  With this
theorem at hand, we deduce the existence of a map from the torus
$\C/\Lambda$ into the elliptic curve $\E$ of equation
$y^2 = 4x^3+g_2 x + g_3$:
\begin{equation}\label{eq:Psi}
  \Psi_{\Lambda} : \map{\C/\Lambda}{\E}{z}{(\wp_{\Lambda}(z) :
    \wp'_{\Lambda}(z) : 1).}
\end{equation}
Note that this map is well--defined everywhere, since at $0$ which is a
pole of order $2$ of $\wp_{\Lambda}$ and of order $3$ of
$\wp'_{\Lambda}$ one can renormalise as
$(z^3\wp_{\Lambda(z)} : z^3 \wp'_{\Lambda}(z) : z^3)$ and evaluate at
$0$, which yields the point $O_{\E} = (0:1:0)$.  The following statement
gathers several nontrivial and fundamental results on complex tori: it
states a one-to-one correspondence between elliptic curves and complex
tori when regarded as complex varieties {\bf but also as groups}.

\begin{theorem}
  The map $\Psi_{\Lambda}$ defined in \eqref{eq:Psi} is a
  biholomorphic isomorphism between $\C/\Lambda$ and the elliptic
  curve $\E$ of equation $y^2 = 4x^3 + g_2 x + g_3$. Moreover, it also
  induces a group isomorphism from $\C/ \Lambda$ equipped with
  the addition law inherited from that of $\C$ into $\E(\C)$ equipped
  with its group law introduced in \S~\ref{ss:group_law}.
  Conversely, given any elliptic curve $\E_0$ over $\C$,
  there exists a lattice $\Lambda_0 \subset \C$ such that
  $\E_0$ is isomorphic to $\C / \Lambda_0$ via the map $\Psi_{\Lambda_0}$.
\end{theorem}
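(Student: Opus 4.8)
The plan is to establish the four assertions of the theorem in sequence: (1) $\Psi_\Lambda$ is well-defined and holomorphic; (2) it is bijective; (3) it is a group homomorphism; (4) the converse existence statement. The first point has already been addressed in the discussion preceding the statement: the renormalisation $(z^3\wp_\Lambda(z) : z^3\wp'_\Lambda(z) : z^3)$ shows $\Psi_\Lambda$ extends holomorphically across the lattice points and sends the class of $0$ to $O_\E$. Holomorphy elsewhere is clear since $\wp_\Lambda,\wp'_\Lambda$ are holomorphic off $\Lambda$, and $\Lambda$-periodicity guarantees the map descends to $\C/\Lambda$.

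For bijectivity, the key is the theory of elliptic functions. To prove surjectivity I would argue that for any $(x_0:y_0:1)$ on $\E$, the function $\wp_\Lambda(z) - x_0$ is a non-constant elliptic function, hence (by the argument principle for elliptic functions, counting zeros and poles in a fundamental parallelogram) has a zero $z_0$; then $\wp'_\Lambda(z_0) = \pm y_0$ by the Weierstrass equation, and replacing $z_0$ by $-z_0$ if necessary (using that $\wp$ is even and $\wp'$ is odd) gives a preimage. The point $O_\E$ is hit by the class of $0$. For injectivity, one shows that $\wp_\Lambda$ takes each value exactly twice on $\C/\Lambda$ (counted with multiplicity, since it has a single pole of order $2$), and that when $\wp_\Lambda(z_1) = \wp_\Lambda(z_2)$ with $z_1 \neq z_2$ one has $z_2 \equiv -z_1$, so the extra information carried by $\wp'_\Lambda$ (which changes sign) separates the two points unless $z_1 \equiv -z_1$, i.e.\ a $2$-torsion point, where one checks directly. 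I would then invoke that a holomorphic bijection between compact Riemann surfaces is a biholomorphism.

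For the group law, the strategy is to use the description of the group law on $\E$ via $\Pic^0$ established in \S\ref{ss:group_law}: it suffices to show that $z_1 + z_2 + z_3 \equiv 0 \pmod \Lambda$ if and only if the three points $\Psi_\Lambda(z_i)$ are collinear in $\P^2$ (which is exactly the relation $(\Psi_\Lambda(z_1) - O_\E) + (\Psi_\Lambda(z_2) - O_\E) + (\Psi_\Lambda(z_3) - O_\E) = 0$ in $\Pic^0$). This collinearity is equivalent to the existence of $a,b$ with $\wp'_\Lambda(z_i) = a\,\wp_\Lambda(z_i) + b$ for $i=1,2,3$; the function $\wp'_\Lambda(z) - a\wp_\Lambda(z) - b$ is elliptic with a single pole of order $3$ at $0$, hence has exactly three zeros summing to $0$ modulo $\Lambda$ (again by the residue/argument principle for elliptic functions, using that the sum of the zeros minus the sum of the poles of an elliptic function lies in $\Lambda$). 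Matching up these zeros with $z_1,z_2,z_3$ gives the claim, with the degenerate cases (tangency, points at infinity) handled by the usual limiting arguments.

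Finally, for the converse, given an elliptic curve $\E_0/\C$ with Weierstrass equation $y^2 = x^3 + Ax + B$ (or $y^2 = 4x^3 + g_2 x + g_3$ after rescaling), one must produce a lattice $\Lambda_0$ with $g_2(\Lambda_0) = g_2$, $g_3(\Lambda_0) = g_3$. This is the \emph{uniformisation theorem}, and it is the main obstacle: the cleanest route is via the modular $j$-function, showing that $j : \SL\backslash\HP \to \C$ is a bijection, so that any $j$-invariant is realised by some lattice; combined with the scaling behaviour $g_2(\mu\Lambda) = \mu^{-4}g_2(\Lambda)$, $g_3(\mu\Lambda) = \mu^{-6}g_3(\Lambda)$ one then matches $g_2,g_3$ exactly. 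I would state this as a citation to \cite[Ch.~VI]{silverman2009book} rather than reproving it, since a self-contained proof requires the full analytic theory of the $j$-function; the earlier parts of the theorem, by contrast, are elementary consequences of the basic properties of elliptic functions.
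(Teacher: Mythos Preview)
Your sketch is correct and follows the standard route (essentially the argument in \cite[\S~VI.2--VI.3]{silverman2009book}). Note, however, that the paper itself does not prove this theorem at all: its ``proof'' consists of two citations to Silverman, one for the group isomorphism and one for the uniformisation direction. So you have not diverged from the paper's approach so much as supplied the content that the paper deliberately outsources. Your decision to cite \cite[Ch.~VI]{silverman2009book} for the converse is exactly what the paper does, and your outlines for bijectivity and for the group law (via the zero-sum property of elliptic functions and the $\Pic^0$ description from \S\ref{ss:group_law}) are the standard arguments behind the cited \cite[Prop.~VI.3.6]{silverman2009book}.
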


\begin{proof}
  See \cite[Prop.~VI.3.6]{silverman2009book} for the group
  isomorphism.  For the construction of a lattice from an elliptic
  curve, see \cite[\S~VI.1]{silverman2009book}.
\end{proof}

\subsubsection{Torsion, isogenies}
An interest of the complex setting is that the previous results on
torsion and isogenies are pretty easy to understand when regarding
elliptic curves as complex tori.

Let us start with the torsion. From
Theorem~\ref{thm:torsion_of_elliptic}, for $m$ prime to the
characteristic, the $m$--torsion of an elliptic curve is isomorphic
to $\Z/m\Z \times \Z /m\Z$.  In the complex setting, consider a torus
$\C / \Lambda$. Then, the torsion points correspond to points
$z \in \C$ such that $mz \in \Lambda$ and hence they correspond to the
points of the lattice $\frac 1 m \Lambda \supset \Lambda$. Then, the
torsion subgroup of $\C / \Lambda$ is isomorphic to
$\frac 1 m \Lambda / \Lambda$. Since $\Lambda$ is of the form
$\Z \omega_1 \oplus \Z \omega_2$ for some $\R$--independent elements
$\omega_1, \omega_2 \in \C$, we deduce that
\[
  (\C/\Lambda) [m] \simeq \left(\frac 1 m \Lambda\right) / \Lambda = \frac{\Z
    \frac{\omega_1}{m} \oplus \Z \frac{\omega_2}{m}}{\Z \omega_1
    \oplus \Z \omega_2} \simeq \Z/m\Z \oplus \Z/m\Z.
\]

Now consider isogenies. When considering
complex tori, isogenies are holomorphic maps
$\C/ \Lambda \rightarrow \C / \Lambda'$.  It turns out that such maps lift to $\C$ and
have a very particular structure.

\begin{theorem}\label{thm:elliptic_prop}
  Let $\Lambda, \Lambda' \subset \C$ be two lattices and
  $f : \C/ \Lambda \rightarrow \C / \Lambda'$ be a holomorphic map $0$
  sending onto $0$.  Then $f$ lifts to a holomorphic map
  $f_0 : \C \rightarrow \C$ such that
  \[
    \forall z \in \C,\quad
    f_0(z) \mod \Lambda' = f(z \mod \Lambda). 
  \]
  Moreover, $f_0$ is a similitude, \ie{} there exists $a \in \C$ such that
  \[\forall z \in \C,\ f_0(z) = az.\]
\end{theorem}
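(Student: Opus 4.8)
The plan is to prove this in two stages: first lift $f$ to a holomorphic map $f_0 : \C \to \C$, then show $f_0$ must be linear.

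\textbf{Lifting $f$ to the universal cover.} The quotient maps $\pi : \C \to \C/\Lambda$ and $\pi' : \C \to \C/\Lambda'$ are holomorphic covering maps, and $\C$ is simply connected. By the lifting criterion from covering space theory, the composite $f \circ \pi : \C \to \C/\Lambda'$ lifts through $\pi'$ to a continuous map $f_0 : \C \to \C$ with $\pi' \circ f_0 = f \circ \pi$, normalised so that $f_0(0) = 0$ (here we use that $f$ sends $0$ to $0$). Since $\pi'$ is a local biholomorphism and $f \circ \pi$ is holomorphic, $f_0$ is holomorphic as well. This gives exactly the relation $f_0(z) \bmod \Lambda' = f(z \bmod \Lambda)$ for all $z \in \C$.

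\textbf{Showing $f_0$ is a similitude.} Fix any $\omega \in \Lambda$. For each $z$, the point $f_0(z + \omega) - f_0(z)$ reduces to $0$ modulo $\Lambda'$, since $z + \omega$ and $z$ have the same image under $\pi$; hence $g_\omega(z) \eqdef f_0(z+\omega) - f_0(z) \in \Lambda'$ for all $z$. The map $z \mapsto g_\omega(z)$ is continuous with values in the discrete set $\Lambda'$ on the connected space $\C$, so it is constant, say equal to $c_\omega \in \Lambda'$. Differentiating $f_0(z+\omega) = f_0(z) + c_\omega$ gives $f_0'(z+\omega) = f_0'(z)$, so $f_0'$ is $\Lambda$--periodic. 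A $\Lambda$--periodic holomorphic function on $\C$ is bounded (its values are attained on a compact fundamental domain), so by Liouville's theorem $f_0'$ is constant, say $f_0' \equiv a$. Integrating and using $f_0(0) = 0$ yields $f_0(z) = az$ for all $z$, which is the desired similitude.

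\textbf{Main obstacle.} The only genuinely delicate point is invoking the lifting criterion correctly and checking that the lift is holomorphic rather than merely continuous; once $f_0$ exists, the discreteness-of-$\Lambda'$ argument and Liouville's theorem make the similitude conclusion routine. One should also note that no hypothesis forces $a \neq 0$: the constant map is a (degenerate) similitude in this convention, and this causes no trouble since genuine isogenies automatically have $a \Lambda \subseteq \Lambda'$ with $a \neq 0$.
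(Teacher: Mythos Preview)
Your proof is correct and is essentially the standard argument: lift via the universal covering property of $\C$, then use discreteness of $\Lambda'$ to show $f_0'$ is $\Lambda$--periodic and conclude by Liouville. The paper does not give its own proof of this statement but simply refers to \cite[Thm.~VI.4.1]{silverman2009book}, where exactly this argument appears; so your write-up in fact supplies what the paper omits.
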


\begin{proof}
  See \cite[Thm.~VI.4.1]{silverman2009book}.
\end{proof}

With this statement at hand, we deduce that an isogeny
$\phi : \C / \Lambda \rightarrow \C/\Lambda'$ is induced by a map
$z \mapsto az$ with $a \Lambda \subseteq \Lambda'$.

\begin{example}
For instance, consider the lattices
\[
  \Lambda = \Z \oplus  \Z 2i \quad \text{and} \quad
  \Lambda' = 2\Z  \oplus \Z 2i
\]
then we easily see that the map $z \mapsto 2z$ induces an isogeny
$\C/ \Lambda \rightarrow \C / \Lambda'$. 
\end{example}

From a similitude $z \mapsto az$ such that
$a \Lambda \subset \Lambda'$, the degree of the corresponding isogeny
is given by $\card{(\Lambda' / a \Lambda)}$. In the previous example,
the isogeny has degree $2$.

Finally, let $\ell$ be a prime integer, and suppose that we have a
degree--$\ell$ isogeny
$\phi_1 : \C/ \Lambda \rightarrow \C / \Lambda'$.  This entails the
existence of $a \in \C$ such that
$\card{(\Lambda' / a \Lambda)} = \ell$. From the structure theorem of
finitely generated modules over principal ideal rings, we deduce the
existence of $\omega_1, \omega_2 \in \C$ such that
\[
  \Lambda = \Z \frac{\ell \omega_1}{a} \oplus \frac{\omega_2}{a}, \quad
  \Lambda' = \Z \omega_1 \oplus \Z \omega_2
\]
and $\phi_1$ is induced from the similitude $z \mapsto az$.

\begin{exercise}
  Prove the last assertion.
\end{exercise}

Now consider the map $z \mapsto \frac{\ell}{a}z$. It induces an isogeny
$\phi_2 : \C/ \Lambda' \rightarrow \C / \Lambda$
of degree $\ell$. Moreover
the composition of the two isogenies : 
\[
  \phi_2 \circ \phi_1 : \C /\Lambda \rightarrow \C/\Lambda' \rightarrow
  \C / \Lambda''
\]
is defined by $z \mod \Lambda \mapsto \ell z \mod \Lambda$ and hence is
nothing but the multiplication by $\ell$ in $\C / \Lambda$. Therefore,
$\phi_2$ is nothing but the dual isogeny map $\hat{\phi}_1$ of $\phi_1$.

\subsection{Automorphisms}
Now, we have a nice description of morphisms of complex elliptic
curves. Moreover, an endomorphism of an elliptic curve, or
equivalently of a complex torus $\C/ \Lambda$, is induced by a
similitude $z \mapsto az$ such that $a\Lambda \subset \Lambda$.

One will also be interested in the sequel by automorphisms of an
elliptic curve, which correspond to similitudes
$z \mapsto az$ such that $a\Lambda = \Lambda$. One can prove that
such an $a$ satisfies $|a| = 1$.

\begin{remark}
  Clearly for any integer $N$ and any lattice $\Lambda$ we have
  $N \Lambda \subset \Lambda$ and the map $z \mapsto Nz$ induces an
  endomorphism of $\C / \Lambda$ which is the multiplication by $N$
  map. In addition, if there exists $a \in \C \setminus \Z$ such
  that $a \Lambda \subset \Lambda$, then the corresponding elliptic
  curve is said to be {\em with complex multiplication}.
\end{remark}

An elementary automorphism for any complex torus is $z \mapsto -z$.
Back to the map $\Psi_{\Lambda}$ in (\ref{eq:Psi}) and using the
fact that $\wp_{\Lambda}$ and $\wp'_{\Lambda}$ are respectively
even and odd, we deduce that this map corresponds on the elliptic curve
to the symmetry with respect to the $x$--axis:
\[(x,y) \longmapsto (x, -y).\]

Furthermore, some sporadic elliptic curves have nontrivial automophisms
coming from $z \mapsto az$ with $|a|=1$ and $a \notin \{\pm 1\}$.

\begin{theorem}\label{thm:automorphisms}
  Let $\C / \Lambda$ be a complex torus with an automorphism $z \mapsto az$
  with $|a| = 1$ and $a \notin \{\pm 1\}$. Equivalently, the lattice $\Lambda$
  satisfies $\Lambda = a \Lambda$. Then, $\Lambda$ is the image
  by a similitude of one of these two lattices:
  \[
    \Z \oplus \Z i \quad \text{or} \quad \Z \oplus \Z \rho,
  \]
  where $\rho = e^{\frac{i\pi}{3}}$.
\end{theorem}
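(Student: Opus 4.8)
The statement is that if a lattice $\Lambda \subset \C$ admits a multiplier $a$ with $|a| = 1$, $a \notin \{\pm 1\}$, and $a\Lambda = \Lambda$, then up to a similitude $\Lambda$ is either $\Z \oplus \Z i$ or $\Z \oplus \Z\rho$ with $\rho = e^{i\pi/3}$. The key observation is that $a$ must be an algebraic integer: multiplication by $a$ preserves $\Lambda$, so writing $a\omega_1, a\omega_2$ in terms of a $\Z$--basis $\omega_1, \omega_2$ of $\Lambda$ gives a matrix $M \in M_2(\Z)$ representing the action, and $a$ is an eigenvalue of $M$. Hence $a$ satisfies the characteristic polynomial $X^2 - (\mathrm{tr}\,M)X + \det M$, a monic integer quadratic. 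Since $a\Lambda = \Lambda$ (not merely $a\Lambda \subseteq \Lambda$), the matrix $M$ is invertible over $\Z$, so $\det M = \pm 1$; and since $|a| = 1$, the two complex roots of the characteristic polynomial are $a$ and $\bar a$ with $a\bar a = |a|^2 = 1$, forcing $\det M = +1$.

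**Main step.** So $a$ is a root of $X^2 - tX + 1$ with $t = \mathrm{tr}\,M \in \Z$, and $a \notin \R$ (otherwise $a = \pm 1$), which forces the discriminant $t^2 - 4 < 0$, i.e. $t \in \{-1, 0, 1\}$. The value $t = 2$ or $t=-2$ would give $a = \pm 1$, already excluded. This yields exactly three possibilities: $t = 0$ gives $a = \pm i$ (so $a$ is a primitive $4$th root of unity); $t = 1$ gives $a = e^{\pm i\pi/3}$ (primitive $6$th root of unity); $t = -1$ gives $a = e^{\pm 2i\pi/3}$ (primitive $3$rd root of unity). In the last case, note $-a$ is a primitive $6$th root of unity and $(-a)\Lambda = -\Lambda = \Lambda$ as well, so this case reduces to the $t=1$ case: it suffices to treat $a = i$ and $a = \rho = e^{i\pi/3}$.

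**Finishing.** It remains to show that a lattice stable under multiplication by $i$ is a similitude copy of $\Z \oplus \Z i$, and one stable under multiplication by $\rho$ is a similitude copy of $\Z \oplus \Z\rho$. For this, pick $\omega \in \Lambda \setminus \{0\}$ of minimal modulus. Then $\Lambda$ contains the $\Z[i]$--module (resp. $\Z[\rho]$--module) generated by $\omega$, namely $\Z\omega \oplus \Z i\omega$ (resp. $\Z\omega \oplus \Z\rho\omega$), which is itself a lattice; one checks it equals $\Lambda$ by a minimality/covering-radius argument — any element of $\Lambda$ outside this sublattice would, after translating by sublattice elements, produce a nonzero lattice point strictly shorter than $\omega$, using that $\Z[i]$ and $\Z[\rho]$ are the rings of integers of $\Q(i)$ and $\Q(\rho)$ and have the property that every point of $\C$ is within distance $<|\omega|$ (in the normalised picture, distance $< 1$) of the sublattice. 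Dividing through by $\omega$ (a similitude) gives the two claimed normal forms.

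**Expected obstacle.** The algebraic core — that $a$ is a root of a monic integer quadratic with $\det = 1$, yielding the three roots of unity — is clean and short. The genuinely fiddly part is the last step: verifying that $\Z\omega \oplus \Z i\omega$ (or the $\rho$--analogue) is \emph{all} of $\Lambda$ rather than a proper finite-index sublattice. The slick way around it is to invoke that $\Z[i]$ and $\Z[\rho]$ are principal ideal domains (indeed Euclidean), so $\Lambda$, being a torsion-free module of rank $1$ over this ring (rank $1$ because $\Lambda \otimes_{\Z} \Q$ is $2$-dimensional over $\Q$ and carries a $\Q(i)$- resp. $\Q(\rho)$-action), is free of rank $1$: $\Lambda = \Z[i]\cdot\omega_0$ (resp. $\Z[\rho]\cdot\omega_0$) for some $\omega_0$, and then the similitude $z \mapsto z/\omega_0$ does the job. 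This module-theoretic argument is the cleanest route and is the step I would be most careful to write out correctly.
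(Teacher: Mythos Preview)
Your proof is correct, and it takes a genuinely different route from the paper's.

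For the first step (identifying $a$), the paper normalises so that a shortest nonzero vector of $\Lambda$ is $1$, observes that $1, a, a^2 \in \Lambda$, and then uses the relation $a^2 + 2\,\mathrm{Re}(a)\,a + 1 = 0$ together with the minimality of $|1|$ to force $2\,\mathrm{Re}(a) \in \Z$: if not, one could subtract a nearby integer multiple of $a$ from $a^2+1$ and obtain a nonzero lattice element of modulus $<1$. You instead represent multiplication by $a$ as an integer $2\times 2$ matrix $M$ with $\det M = a\bar a = 1$ and read off that $a$ is a root of $X^2 - tX + 1$ with $t \in \Z$, $|t| < 2$. Your argument is slicker and more algebraic; the paper's is more hands-on and geometric but arrives at exactly the same short list $\mathrm{Re}(a) \in \{-\tfrac12, 0, \tfrac12\}$.

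For the second step (showing $\Lambda$ is a similitude of $\Z[i]$ or $\Z[\rho]$), the paper is in fact quite terse --- it simply asserts that the three values of $\mathrm{Re}(a)$ ``provide'' the two lattices, implicitly relying on the shortest-vector normalisation plus a covering argument it does not spell out. Your PID argument (that $\Lambda$ is a torsion-free rank-$1$ module over the Euclidean ring $\Z[a]$, hence free of rank $1$) is both cleaner and more complete here. So on this step your write-up is actually an improvement on the paper's.
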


\begin{proof}
  Let $\Lambda$ be a lattice such that $a\Lambda = \Lambda$ and
  $\nu \in \Lambda \setminus \{0\}$ be a vector of minimal modulus.
  Since we look for $\Lambda$ up to a similitude, one can assume that
  $\nu = 1$ and that for all $\omega \in \Lambda\setminus \{0\}$,
  $|\omega| \geq 1$.
  Assuming that $1 \in \Lambda$, then, by assumption on
  $\Lambda$, we deduce that $a, a^2$ are elements of $\Lambda$ too.
  Since $a \notin \R$, its minimal polynomial over $\R$ is
  \[
    \begin{array}{ccl}
      (x-a)(x-\bar a) & = & x^2 + 2 \text{Re}(a)x + |a|^2 \\
      & = & x^2 + 2
    \text{Re}(a)x + 1, 
    \end{array}
  \]
  where $\text{Re}(a)$ denotes the real part of $a$. 
  Consequently,
  \[
    a ^2 + 1= -2 \text{Re}(a) a.
  \]
  Note that $|a| = 1$ and $a \notin \R$ entails
  $-1 < \text{Re}(a) < 1$.  If $2\text{Re}(a) \notin \Z$, then there
  is $\varepsilon \in \{-1, 0, 1\}$ such that
  \[
    a^2 + \varepsilon a + 1 = \gamma a
  \]
  for some $0 < \gamma < 1$. Since the left--hand side is a
  $\Z$--linear combination of elements of $\Lambda$, then
  $\gamma a \in \Lambda$ which contradicts the assumption that any
  nonzero $\omega \in \Lambda$ satisfies $|\omega| \geq 1$.
  Therefore $ \text{Re}(a) \in \{- \frac 1 2, 0, \frac 1 2\}$.
  Case $\text{Re}(a) = 0$ provides the case
  \(\Lambda = \Z \oplus \Z i\) and the two other cases provide
  the same lattice, namely $\Z \oplus \Z \rho$.
\end{proof}

The corresponding elliptic curves can be proved to have respective equations:
\begin{equation}\label{eq:curve_with_auto}
  \begin{array}{cclcccl}
    y^2 & = & x^3 + x & \qquad \text{for} \qquad & \Lambda & = & \Z \oplus
                                                                 \Z i \quad \text{($j$--invariant $1728$)}\\
    y^2 & = & x^3 + 1 & \qquad \text{for} \qquad & \Lambda & = & \Z \oplus
                                                                 \Z \rho
                                                                 \quad \text{($j$--invariant $0$).}
  \end{array}
\end{equation}
The corresponding automorphisms being respectively
\[
  \begin{array}{ccl}
    (x,y) & \longmapsto & (-x, iy) \\
    (x,y) & \longmapsto & (\rho x, -y).
  \end{array}
\]
Note that these automorphisms have respective orders $4$ and $6$ which
are the multiplicative orders of $i$ and $\rho$.  Finally, note that
for any field containing fourth and sixth roots of $1$, the curves
with equations~\eqref{eq:curve_with_auto} have a nontrivial
automorphism group. Moreover, one can prove that they are the only
curves with non trivial automorphism groups
\cite[Thm.~III.10.1]{silverman2009book} and that their automorphism groups
have respective cardinalities $4$ and $6$.

 \section{Modular curves}\label{sec:modular}

\subsection{The Poincaré upper half plane}
The objective is to classify elliptic curves over $\C$ up to
isomorphism. As explained in \S~\ref{ss:elliptic_over_C}, this reduces
to classify lattices up to similitudes whose definition
is recalled there.

\begin{definition}[Similitudes of $\C$]
  A {\em similitude} of $\C$ is a map of the form $z \mapsto az$
  for some $a \in \C^\times$.
\end{definition}

Besides the action of the group of similitudes on the set of lattices
of $\C$, lattices are described by a basis which is not unique. This
requires to introduce another group action on the possible bases.
Namely, given a lattice
\[
\Lambda = \Z \omega_1 \oplus \Z \omega_2,
\]
the basis $(\omega_1, \omega_2)$ is not unique and any other basis
$(\mu_1, \mu_2)$ is deduced from $(\omega_1, \omega_2)$ by
\[
  \begin{pmatrix}
    \mu_1 \\ \mu_2
  \end{pmatrix} = M \cdot
  \begin{pmatrix}
    \omega_1 \\ \omega_2
  \end{pmatrix},
  \quad \text{for\ some\ } M \in \GL_2(\Z).
\]
Up to swapping the entries of the basis, one can always assume that
the bases we consider have the same orientation, \ie{} that
$\text{Im}(\frac{\omega_1}{\omega_2})>0$
(resp. $\text{Im}(\frac{\mu_1}{\mu_2})>0$), where $\text{Im}(\cdot)$
denotes the imaginary part of a complex number.  If the bases are
chosen under this constraint, then the transition matrix $M$ always
has a positive determinant and hence is in $\SL$.  Therefore, the set
of lattices of $\C$ is in one-to-one correspondence with the
classes of pairs $(\omega_1, \omega_2) \in \C^2$ with
$\text{Im}(\frac{\omega_1}{\omega_2}) > 0$ modulo the action of $\SL$.
Next, we need to consider the action of similitudes. Starting from
$\Lambda = \Z\omega_1 \oplus \Z \omega_2$ with
$\text{Im}(\frac{\omega_1}{\omega_2}) > 0$ and applying the similitude
$z \mapsto \frac{1}{\omega_2} z$, we get a similar lattice:
\[
  \Z \oplus \Z \tau
\]
with $\tau = \frac{\omega_1}{\omega_2}$ and hence $\text{Im}(\tau)>0$.
Let
\[
  \HP \eqdef \left\{z \in \C ~|~ \text{Im}(z) > 0 \right\},
\]
be the {\em Poincaré upper half plane}.  Then any lattice up to
similitude can be associated to an element $\tau \in \HP$ and the action of
$\SL$ on bases of lattices induces the following action on $\HP$.
Starting from
\[
  M  =
  \begin{pmatrix}
    a & b \\ c & d
  \end{pmatrix} \in \SL,
\] 
$M$ acts on bases as:
\[
  M\cdot \begin{pmatrix}
    \omega_1 \\ \omega_2
  \end{pmatrix}
  =
  \begin{pmatrix}
    a \omega_1 + b\omega_2 \\
    c \omega_1 + d\omega_2
  \end{pmatrix}.
\]
Therefore since $\tau = \frac{\omega_1}{\omega_2}$, we naturally define the
action of $\SL$ on $\HP$ by
\begin{equation}\label{eq:action_SL2}
  M\cdot \tau \eqdef \frac{a\omega_1 + b\omega_2}{c\omega_1 + d\omega_2}=
  \frac{a\tau + b}{c\tau + d}\cdot
\end{equation}
{\bf In summary,} according to the discussion of \S~\ref{ss:elliptic_over_C},
we have the
following correspondence:

\bigskip

\[
  \begin{array}{ccccccc}
    \text{Elliptic curves} && \text{Complex tori}& & \text{Lattices of }\C & & \text{Points of }\HP \\
    \text{up to} & \longleftrightarrow & \text{up to}& \longleftrightarrow & \text{up to} & \longleftrightarrow & \text{modulo}\\
    \text{isomorphism} && \text{biholomorphic}& & \text{similitudes} & & \text{the action (\ref{eq:action_SL2}) of} \\
    && \text{isomorphisms} & & && \SL
  \end{array}  
\]
\smallskip

Moreover, fundamental domains for the action of $\SL$ on
$\HP$ are represented in Figure~\ref{fig:modular_action},
which is a famous picture that you can find in so
many books of geometry or number theory.

\begin{figure}
  \centering
  \includegraphics[scale = .16]{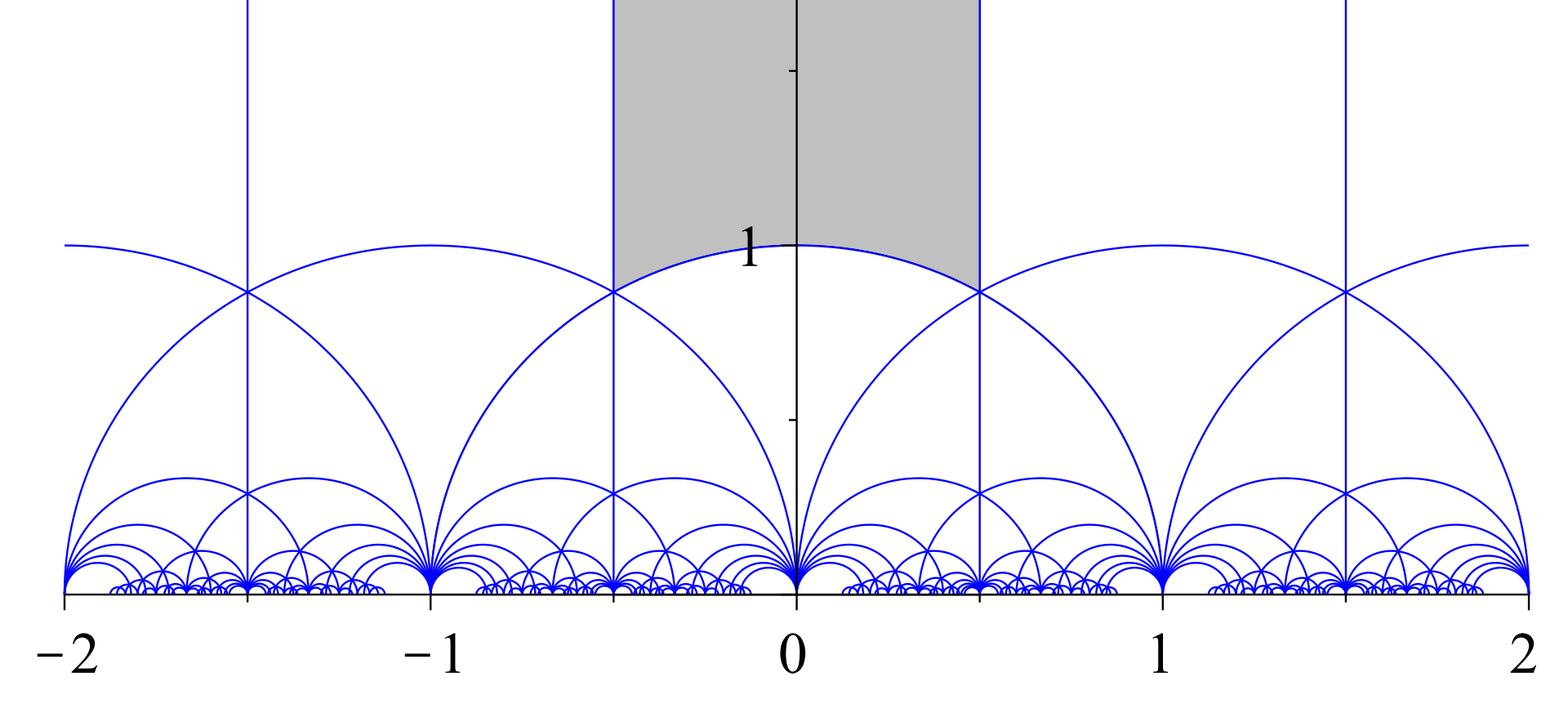}
  \caption{Fundamental domain for the action of $\SL$ on $\HP$
    (Source: Wikipedia)}
  \label{fig:modular_action}
\end{figure}

\subsection{The curve $X_0(1)$}
So, to parameterise the set of elliptic curves up to isomorphism, we
can consider the quotient $\SL \backslash \HP$. It is proved in
\cite[Prop.~2.21]{milneMF} that
this quotient is a complex variety isomorphic to $\A^1$, \ie{} to the
complex affine line. This is not surprising,
Theorem~\ref{thm:elliptic_prop} entails that complex elliptic curves
up to ismomorphisms are in one-to-one correspondence with $\C$ via
the map $\E \mapsto j(\E)$, where $j(\E)$ denotes the $j$--invariant
of $\E$.

Next, for convenience and in order to apply results on algebraic
curves introduced in \S~\ref{sec:curves}, it will be useful to have
some projective closure of this parameterising curve. In the complex
setting, this is nothing but a compactification and the affine line
can be compactified with one point. However, for a reason which will
appear to be more natural in the sequel, the compactification will be
made via a somehow more complicated construction.

The idea is to join to $\HP$ all the elements of $\Q$ which lie on the
boundary of $\HP$ together with a point at infinity. Namely, we define
\[
  \HP^* \eqdef \HP \cup \P^1(\Q).
\]
Next let us see how the action of $\SL$ extends to $\P^1(\Q)$.

\begin{proposition}\label{prop:cusps}
  Consider the following action of $\SL$  on $\P^1(\Q)$:
  \[
    \forall M =
    \begin{pmatrix}
      a & b \\ c & d
    \end{pmatrix} \in \SL,\
    (u:v) \in \P^1(\Q),\quad
    M\cdot (u:v) = (au+bv : cu+dv).
  \]
  This action is transitive, \ie{} for any $(u:v), (u':v') \in \P^1(\Q)$,
  there exists $M \in \SL$ such that $M\cdot (u:v) = (u':v')$.
\end{proposition}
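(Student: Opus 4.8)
The plan is to reduce everything to a single orbit computation: I will show that the point $\infty \eqdef (1:0) \in \P^1(\Q)$ has orbit equal to all of $\P^1(\Q)$. Transitivity then follows formally, since if $M\cdot\infty = (u:v)$ and $M'\cdot\infty = (u':v')$ for some $M, M' \in \SL$, then $(M'M^{-1})\cdot(u:v) = (u':v')$.

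First I would normalise the representatives. Any class $(u:v) \in \P^1(\Q)$ can be rescaled --- multiply both coordinates by a common denominator of $u$ and $v$, then divide by their greatest common divisor --- to bring it to the form $(u:v)$ with $u, v \in \Z$ and $\gcd(u,v)=1$; when $v = 0$ this is simply $(1:0) = \infty$, so we may assume we are in the coprime integer case. Then Bézout's identity provides integers $d, b$ with $ud - vb = 1$. Setting
\[
  M = \begin{pmatrix} u & b \\ v & d \end{pmatrix},
\]
we have $\det M = ud - vb = 1$, hence $M \in \SL$, and by the definition of the action $M\cdot(1:0) = (u : v)$. So $(u:v)$ lies in the orbit of $\infty$, and assembling the pieces as above yields the claim.

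The step playing the role of the "main obstacle" is hardly one: it is just the Bézout identity together with checking that the displayed formula genuinely defines an action on $\P^1(\Q)$ --- in particular that it does not depend on the chosen homogeneous representative and that coprimality of integer coordinates is preserved, which is immediate from $\det M = 1$. Everything else reduces to this one-line argument.
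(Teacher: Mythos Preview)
Your proof is correct and follows essentially the same route as the paper: reduce to showing a single point has full orbit, normalise a given $(u:v)$ to coprime integer coordinates, and use B\'ezout's identity to exhibit an $\SL$--matrix carrying the base point to $(u:v)$. The only cosmetic difference is that the paper takes $(0:1)$ as base point while you take $(1:0)$.
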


\begin{proof}
  First, let us prove that the orbit of $(0:1)$ equals the whole
  $\P^1 (\Q)$. Note first that
  \[
    \begin{pmatrix}
      0 & -1 \\ 1 & 0
    \end{pmatrix} \cdot (0:1) = (-1:0) = (1:0).
  \]
  Hence $(1:0)$ is in the orbit of $(0:1)$.  Next, consider any other
  point $(s:t) \in \P^1(\Q) \setminus \{(1:0)\}$, \ie{} such that
  $t \neq 0$.  After multiplying the coordinates by a common
  denominator, one can suppose that $s, t \in \Z$ and after possibly
  dividing by their greatest common denominator, one can suppose $s,t$
  are prime to each other. By Bézout's Theorem, there exist $u, v \in \Z$
  such that $su + tv = 1$ and then
  \[
    \begin{pmatrix}
      t & u \\ -s & v
    \end{pmatrix}\cdot (0 : 1) = (u:v)\quad \text{and}\quad
    \begin{pmatrix}
      t & u \\ -s & v
    \end{pmatrix} \in \SL.
  \]
  Therefore, any element of $\P^1(\Q)$ is in the orbit of
  $(0:1)$. Finally, given two elements $(u:v),(u':v') \in \P^1(\Q)$
  there exist $M,M'$ such that $(u:v) = M \cdot (0:1)$ and
  $(u':v') = M' \cdot (0:1)$ and $(u':v') = M'M^{-1}(u:v)$.
\end{proof}

Therefore, the quotient
\(
  \SL \backslash \HP^*
  \) is nothing but the compactification of $\SL \backslash \HP$ by adjoining
  a single point. This quotient is usually denoted as
  $X_0(1)$ and is nothing but the Riemann sphere $\P^1 (\C)$.

  In terms of functions on $X_0(1)$, there exists a holomorphic
  function $j : \HP \rightarrow \C$ which is invariant under the
  action of $\SL$ and such that the induced map
  $\SL \backslash \HP \rightarrow \C$ is bijective. This map realises
  an isomorphism between $X_0(1)$ and $\P^1(\C)$. It can be
  ``made explicit'' as follows. From $\tau \in \HP$ construct the lattice
  $\Lambda_{\tau} = \Z \oplus \Z \tau$. Then using the Weierstrass
  $\wp_{\Lambda_\tau}$ function, compute an equation of the elliptic
  curve corresponding to $\C/ \Lambda_\tau$. Then, $j (\tau)$ is
  nothing but the $j$--invariant of this latter elliptic curve.

  \subsection{The curve $X_0 (\ell)$}
  Once we have a curve parameterising elliptic curves up to
  isomorphisms, we are still a bit far from our objective since we
  look for a family of curves whose sequence of genera goes to
  infinity, while we only got $\P^1$ which has genus $0$. To get
  curves with a higher genus, we need to enhance the structure and the
  idea is not only to classify elliptic curves up to isomorphism but
  to classify for a fixed integer $\ell$, the $\ell$--isogenies
  $\E \rightarrow \E'$ up to isomorphism. In the sequel we are only
  interested in the case where $\ell$ is prime (but many of the
  results to follow extend to an arbitrary degree of isogeny).

  \begin{remark}
    Note that, here, by ``up to ismomorphism'' we mean that two
    isogenies $\phi_1 : \E_1 \rightarrow \E_1'$ and
    $\phi_2 : \E_2 \rightarrow \E_2'$ will be said to be {\em
      isomorphic} if there exist two isomorphisms
    $\eta : \E_1 \rightarrow \E_2$ and $\nu : \E_1' \rightarrow \E_2'$
    such that the following diagram commutes.
    \begin{center}
      \begin{tikzpicture}
        \node at (0, 2) {$\E_1$};
        \node at (0, 0) {$\E_1'$};
        \node at (2, 2) {$\E_2$};
        \node at (2, 0) {$\E_2'$};
        \node at (-.2, 1) {${\eta}$};
        \node at (2.2, 1) {$\nu$};
        \node at (1, 2.3) {$\phi_1$};
        \node at (1, 0.3) {$\phi_2$};
        \draw[->] (0.3, 0.05) to (1.7, 0.05);
        \draw[->] (0.3, 2.05) to (1.7, 2.05);
        \draw[->] (0.05, 1.7) to (0.05, 0.3);
        \draw[->] (1.95, 1.7) to (1.95, 0.3);
      \end{tikzpicture}
    \end{center}
  \end{remark}

  \noindent From Theorem~\ref{thm:isogenies_as_quotients}, an
  $\ell$--isogeny $\E \rightarrow \E'$ corresponds to a pair
  $(\E, C)$ where $C \subseteq \E[\ell]$ is a subgroup of cardinality
  $\ell$. Then, in the complex setting, it reduces to classify pairs
  of lattices $\Lambda, \Lambda'$ such that
  $\Lambda \subseteq \Lambda'$ and $\card{(\Lambda' / \Lambda)} = \ell$.
  The structure theorem for finitely generated modules over a
  principal ideal ring asserts that there exists a basis
  $\omega_1, \omega_2$ of $\Lambda$ such that
  \[
    \Lambda = \Z \omega_1 \oplus \Z \omega_2 \quad \text{and} \quad
    \Lambda' = \Z \frac{\omega_1}{\ell} \oplus \Z \omega_2\cdot
  \]
  With the above description, one sees easily that
  $\E [\ell] = (\frac{1}{\ell}\Lambda) / \Lambda \simeq \F_\ell \oplus
  \F_\ell$ and $\Lambda' / \Lambda$ identifies to an
  $\F_\ell$--subspace of dimension $1$ of $\E [\ell]$, namely the
  subspace spanned by the class of $\frac{\omega_1}{\ell}$.  Since we
  wish to classify elliptic curves $\E$ with a given $\ell$--torsion
  subgroup $C$, we need to classify changes of basis preserving this
  subgroup. Observe that the action of $\SL$ on bases of $\Lambda$
  induces a natural action of $\SLl$ on
  $\E[\ell] = (\frac{1}{\ell}\Lambda)/\Lambda$. The elements of $\SLl$
  that fix the class of $\frac{\omega_1}{\ell}$ are the upper
  triangular matrices.  This motivates the definition of the {\em
    congruence subgroup} $\Gamma_0(\ell) \subset \SL$ defined as
\[
  \Gamma_0(\ell) \eqdef \left\{
    \begin{pmatrix}
      a & b \\ c & d
    \end{pmatrix} \in \SL ~\bigg|~  c \equiv 0 \mod \ell 
\right\}.
\]
Namely, this is the group of elements of $\SL$ which induce an automorphism
of $\E[\ell]$ fixing $C$.

\begin{exercise}
  Prove that the canonical map
  \[
    \SL \longrightarrow \SLl
  \]
  given by the reduction of the coefficients modulo $\ell$
  is surjective. To do it:
  \begin{enumerate}[(a)]
  \item Prove that an element of $\SLl$ has a lift $
    \begin{pmatrix}
      a & b \\ c & d
    \end{pmatrix}
    $ with $a,b,c,d \in Z$ such that $a, b$ are nonzero and prime to
    each other.
  \item Prove that for such a lift, $c, d$ can be replaced by $c', d'$
    such that $c \equiv c' \mod \ell$ and $d \equiv d' \mod \ell$
    so that $\det
    \begin{pmatrix}
      a & b \\ c' & d'
    \end{pmatrix} = 1.  $
  \end{enumerate}
\end{exercise}

Therefore, the set of $\ell$--isogenies between elliptic curves up to isomorphism
is in one-to-one correspondence with the complex variety
\[
  \Gamma_0(\ell) \backslash \HP.
\]
This variety has a compactification
\[
  X_0(\ell) \eqdef \Gamma_0(\ell) \backslash \HP^*. 
\]
This is a compact Riemann surface and it can be proved that such an
object is actually algebraic, \ie{} is biholomorphic with a smooth
complex projective curve.  This structure of algebraic curve is
discussed further.

The next statement gives a crucial information, namely
the genus of $X_0(\ell)$.

\begin{theorem}\label{thm:genus_X0ell}
  For a prime number $\ell> 3$,
  the genus $g_{\ell}$ of $X_0(\ell)$ equals
  \[
   g_{\ell} = \left\{
     \begin{array}{ccll}
       \frac{ \ell - 1}{ 12}-1 & \text{if} & \ell \equiv 1 &\mod [12]\\
       \frac{ \ell - 5}{ 12} & \text{if} & \ell \equiv 5 &\mod [12]\\
       \frac{ \ell - 7}{ 12} & \text{if} & \ell \equiv 7 &\mod [12]\\
       \frac{ \ell + 1}{ 12} & \text{if} & \ell \equiv 11 &\mod [12].\\       
     \end{array}
     \right.
  \]
\end{theorem}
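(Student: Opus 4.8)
¡The plan is to compute the genus of $X_0(\ell)$ via the Riemann--Hurwitz formula applied to the natural degree-$(\ell+1)$ covering map $X_0(\ell) \to X_0(1) \simeq \P^1(\C)$, which forgets the level structure by sending (the class of) an $\ell$-isogeny $\E \to \E'$ to (the class of) its source $\E$. Concretely, on the upper half plane this is the map induced by the inclusion $\Gamma_0(\ell) \subset \SL$, and its degree is the index $[\SL : \Gamma_0(\ell)]$, which equals $\ell+1$ since $\Gamma_0(\ell)$ is the stabiliser of a line in $\P^1(\F_\ell)$ under the surjection $\SL \to \SLl$ (this surjectivity is the content of the preceding exercise). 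Since $\P^1(\C)$ has genus $0$, Riemann--Hurwitz reads
\[
  2g_\ell - 2 = (\ell+1)(2\cdot 0 - 2) + \sum_{Q} (e_Q - 1) = -2(\ell+1) + \sum_Q (e_Q - 1),
\]
so everything comes down to identifying the ramification.

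\textbf{Locating the ramified points.} The covering $X_0(\ell) \to X_0(1)$ is ramified only above the points of $X_0(1)$ corresponding to elliptic curves with extra automorphisms (the points $j = 0$ and $j = 1728$, equivalently the $\SL$-orbits of $\rho = e^{i\pi/3}$ and $i$) and possibly above the cusp. First I would check that the map is \emph{unramified at the cusp} of $X_0(1)$: using the local description near the cusp (the local parameter being a power of $q = e^{2\pi i \tau}$ on each side), the ramification indices of the cusps of $X_0(\ell)$ — there are exactly two cusps, corresponding to $(1:0)$ and $(0:1)$ in $\P^1(\Q)$ modulo $\Gamma_0(\ell)$, of "widths" $1$ and $\ell$ — contribute $(1-1) + (\ell - 1) = \ell - 1$ to the ramification sum. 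Next, for the elliptic points I would count, for each of the two special $j$-values, the number of points of $X_0(\ell)$ above it and their ramification indices. The generic fibre has $\ell+1$ points; a point of $X_0(\ell)$ above $j=1728$ is unramified precisely when the order-$4$ automorphism of the curve does \emph{not} fix the corresponding line $C \subset \E[\ell]$, and ramified (with index $2$, since the automorphism has order $4$ acting with $-1$ trivial) when it does. The number of fixed lines is controlled by whether $-1$ is a square mod $\ell$, i.e. by $\ell \bmod 4$; similarly for $j=0$ the order-$6$ automorphism gives ramification index $3$ and the count depends on whether $\ell \equiv 1 \bmod 3$. One packages this via quadratic/cubic residue symbols: the number of elliptic points of order $2$ on $X_0(\ell)$ is $1 + \left(\frac{-1}{\ell}\right)$ and of order $3$ is $1 + \left(\frac{-3}{\ell}\right)$.

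\textbf{Assembling the formula.} Writing $\nu_2 = 1 + \left(\frac{-1}{\ell}\right)$ and $\nu_3 = 1 + \left(\frac{-3}{\ell}\right)$ for these counts, the ramification above $j=1728$ contributes $\frac{(\ell+1) - \nu_2}{2}\cdot(2-1)$ (the $\nu_2$ points are unramified, the remaining $\ell+1-\nu_2$ points pair up with index $2$), hence $\frac{\ell+1-\nu_2}{2}$; above $j=0$ it contributes $\frac{\ell+1-\nu_3}{3}\cdot(3-1) = \frac{2(\ell+1-\nu_3)}{3}$; and the cusp contributes $\ell-1$. Plugging into Riemann--Hurwitz:
\[
  2g_\ell - 2 = -2(\ell+1) + (\ell-1) + \frac{\ell+1-\nu_2}{2} + \frac{2(\ell+1-\nu_3)}{3},
\]
and solving gives $g_\ell = 1 + \frac{\ell+1}{12} - \frac{\nu_2}{4} - \frac{\nu_3}{3}$. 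Then one does the four-way case analysis on $\ell \bmod 12$: the values of $\left(\frac{-1}{\ell}\right) \in \{\pm 1\}$ and $\left(\frac{-3}{\ell}\right) \in \{\pm 1\}$ are determined by $\ell \bmod 4$ and $\ell \bmod 3$ respectively, so by $\ell \bmod 12$, and substituting each of the four residues $1, 5, 7, 11$ reproduces exactly the four cases in the statement (e.g. $\ell \equiv 1$: $\nu_2 = \nu_3 = 2$, giving $g_\ell = 1 + \frac{\ell+1}{12} - \frac12 - \frac23 = \frac{\ell-1}{12} - 1$).

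\textbf{Main obstacle.} The genuinely delicate step is the precise local analysis giving the ramification indices and the counts $\nu_2, \nu_3$ — in particular justifying that an automorphism of order $4$ (resp. $6$) contributes ramification index exactly $2$ (resp. $3$), not $4$ or $6$, because $-1$ acts trivially on the set of lines in $\E[\ell]$; and counting fixed lines of the automorphism action on $\P^1(\F_\ell)$, which is where the residue symbols enter. The behaviour at the cusp (two cusps of widths $1$ and $\ell$, total ramification $\ell-1$) also needs the $q$-expansion picture rather than the naive geometric one. All of this is standard modular-curve theory, and I would cite \cite{milneMF} (or \cite{silverman2009book}) for the local computations rather than redo them, focusing the written proof on the Riemann--Hurwitz bookkeeping and the final case analysis.
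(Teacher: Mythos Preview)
Your approach is essentially identical to the paper's: both apply Riemann--Hurwitz to the degree-$(\ell+1)$ map $X_0(\ell)\to X_0(1)$, analyse ramification above $j=1728$, $j=0$, and the cusp via the action of the automorphism groups on lines in $\E[\ell]$, and then do a mod-$12$ case split. The paper carries out the local computations explicitly (via its Lemmas~\ref{lem:isomorphic_subgroups} and~\ref{lem:faithful}) rather than citing them, but the substance is the same.

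One arithmetic slip: your Riemann--Hurwitz equation is correct, but solving it gives $g_\ell = \frac{\ell+1}{12} - \frac{\nu_2}{4} - \frac{\nu_3}{3}$, \emph{without} the ``$1+$''. (Equivalently, the standard formula $g = 1 + \frac{\mu}{12} - \frac{\nu_2}{4} - \frac{\nu_3}{3} - \frac{\nu_\infty}{2}$ has a $-\nu_\infty/2 = -1$ term you dropped.) Your check for $\ell\equiv 1$ then contains a compensating error: $1 + \frac{\ell+1}{12} - \frac12 - \frac23 = \frac{\ell-1}{12}$, not $\frac{\ell-1}{12}-1$. With the correct closed form the four cases come out right.
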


We first need two technical lemmas.

\begin{lemma}\label{lem:isomorphic_subgroups}
  Let $\ell$ be a prime integer.
  Let $\Lambda \subseteq \C$ be a lattice and $\Lambda_1, \Lambda_2$
  be two distinct lattices both containing $\Lambda$ and
  $\card{(\Lambda_1 / \Lambda)} = \card{(\Lambda_2 / \Lambda)} = \ell$.
  Suppose that $a \Lambda_1 = \Lambda_2$ for some $a \in \C$. Then
  $|a| = 1$ and $a\Lambda = \Lambda$.
  Equivalently, given an elliptic curve $\E$ over $\C$ and two distinct
  subgroups $C_1, C_2$ of cardinality $\ell$ of $\E [\ell]$. If the
  curves $\E/C_1$ and $\E/C_2$ are isomorphic, then there is an automorphism
  of $\E$ sending $C_1$ onto $C_2$.
\end{lemma}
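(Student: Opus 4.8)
The statement breaks into two claims: $|a|=1$ is soft, while $a\Lambda=\Lambda$ is where the content is. I would prove them in that order, reducing the second to a statement about $\End(\C/\Lambda)$.

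For $|a|=1$ I would compare covolumes. Since $\card{(\Lambda_i/\Lambda)}=\ell$, the lattice $\Lambda_i$ has covolume $\tfrac1\ell\,\mathrm{vol}(\C/\Lambda)$ for $i=1,2$, while the similitude $z\mapsto az$ multiplies every covolume by $|a|^2$; from $a\Lambda_1=\Lambda_2$ we get $|a|^2\cdot\tfrac1\ell\,\mathrm{vol}(\C/\Lambda)=\tfrac1\ell\,\mathrm{vol}(\C/\Lambda)$, hence $|a|^2=1$.

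For $a\Lambda=\Lambda$, the first move is to bring $a$ close to being an endomorphism of $\C/\Lambda$. From $\Lambda\subseteq\Lambda_1$ and $\Lambda\subseteq\Lambda_2=a\Lambda_1$ one gets $a^{-1}\Lambda\subseteq\Lambda_1$, and applying $a$ also $a\Lambda\subseteq\Lambda_2$; all four inclusions have index $\ell$ (for the last two because $a,a^{-1}$ preserve covolumes). Suppose, for contradiction, that $a\Lambda\neq\Lambda$; then $a^{-1}\Lambda\neq\Lambda$ too, since by the covolume equality neither of these lattices can be a proper subgroup of the other. Thus $\Lambda\subsetneq\Lambda+a^{-1}\Lambda\subseteq\Lambda_1$, and as $[\Lambda_1:\Lambda]=\ell$ is prime there is no lattice strictly in between, forcing $\Lambda+a^{-1}\Lambda=\Lambda_1$ and, symmetrically, $\Lambda+a\Lambda=\Lambda_2$. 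In particular $a^{-1}\Lambda\subseteq\Lambda_1\subseteq\tfrac1\ell\Lambda$ and $a\Lambda\subseteq\Lambda_2\subseteq\tfrac1\ell\Lambda$, i.e. $\ell a$ and $\ell a^{-1}=\ell\bar a$ (using $|a|=1$) both lie in $\O\eqdef\End(\C/\Lambda)=\{c\in\C\mid c\Lambda\subseteq\Lambda\}$.

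It then remains to rule this out, and here one splits according to the shape of $\O$. If $\O=\Z$, then $\ell a\in\Z$ with $|\ell a|=\ell$ forces $a=\pm1$, contradicting $a\Lambda\neq\Lambda$. If $\O\supsetneq\Z$, then $\O$ is an order in an imaginary quadratic field $K$ and $a=(\ell a)/\ell\in K$ satisfies $a\bar a=|a|^2=1$ with $a\notin\R$. The delicate step — the one I expect to be the real obstacle — is to deduce that $a$ is a unit of $\O$: once that is known, $a\Lambda\subseteq\Lambda$ and $a^{-1}\Lambda\subseteq\Lambda$ give $a\Lambda=\Lambda$, the desired contradiction, so $a\Lambda=\Lambda$ in all cases. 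To carry out this step I would combine $\ell a\in\O$, $a\bar a=1$, $a+\bar a\in\Q$ and $|a+\bar a|<2$ to cut $a$ down to the short finite list of roots of unity of $K$ other than $\pm1$; in the curve language these correspond exactly to the extra automorphisms $(x,y)\mapsto(-x,iy)$ and $(x,y)\mapsto(\rho x,-y)$ of the two curves~\eqref{eq:curve_with_auto}, for which $a$ is genuinely a unit of the endomorphism ring. Finally, translating everything through the correspondence of \S\ref{ss:elliptic_over_C} yields the ``equivalently'' reformulation: the automorphism of $\E=\C/\Lambda$ induced by $z\mapsto az$ sends $C_1=\Lambda_1/\Lambda$ onto $C_2=\Lambda_2/\Lambda$.
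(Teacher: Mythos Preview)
Your argument for $|a|=1$ matches the paper's, and your route to $\ell a,\ \ell\bar a\in\O\eqdef\End(\C/\Lambda)$ under the assumption $a\Lambda\neq\Lambda$ is clean and correct, as is the disposal of the case $\O=\Z$. The problem is precisely the step you yourself flag as ``delicate'': from $\ell a\in\O$, $a\bar a=1$ and $|a+\bar a|<2$ you \emph{cannot} conclude that $a$ is a root of unity in $K$, and in fact the lemma as stated is false. Take $\ell=5$, $\Lambda=\Z[i]$, $\Lambda_1=\tfrac{1}{2+i}\Z[i]$, $\Lambda_2=\tfrac{1}{2-i}\Z[i]$ and $a=\tfrac{2+i}{2-i}=\tfrac{3+4i}{5}$: these are distinct index--$5$ overlattices of $\Lambda$ with $a\Lambda_1=\Lambda_2$ and $|a|=1$, and all of your constraints hold ($5a=3+4i\in\Z[i]$, $5\bar a=3-4i\in\Z[i]$, $a+\bar a=\tfrac{6}{5}$), yet $a\notin\Z[i]$ and $a\Lambda\neq\Lambda$. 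The ``equivalently'' version fails too: since $\Lambda_1,\Lambda_2$ are $\Z[i]$--ideals, every unit $\pm1,\pm i$ stabilises each of them, so no automorphism of $\E=\C/\Z[i]$ carries $C_1=\Lambda_1/\Lambda$ onto $C_2=\Lambda_2/\Lambda$, even though $\E/C_1\simeq\E/C_2\simeq\E$.

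For comparison, the paper attacks the second half differently: it establishes (correctly) the inductive identity $\sum_{j=-s}^{s}a^{j}\Lambda=\ell^{-s}\Lambda$ and then runs a convergence argument. That final passage is where the paper's own proof also breaks down: the coefficients $\nu_i$ in $\sum_i a^{i}\nu_i$ depend on the truncation level $N$, so one cannot infer that ``the general term goes to $0$'' and then freeze the sum. Neither approach can be repaired, since the statement is false; the good news is that the application in the proof of Theorem~\ref{thm:genus_X0ell} only needs to count pairs $(\E,C)$ up to $\Aut(\E)$, a notion that by definition makes no reference to the isomorphism class of $\E/C$ and hence does not rely on this lemma.
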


\begin{remark}
  Note that if $\E$ has such an automorphism, then it should be one of
  the two curves mentioned in Theorem~\ref{thm:automorphisms}.
\end{remark}

\begin{proof}[Proof of Lemma~\ref{lem:isomorphic_subgroups}]
  {\bf Step 1. An adapted basis.}
  We claim that there exists $\omega_1, \omega_2 \in \C$ such that
  \begin{equation}\label{eq:bases}
    \Lambda = \Z \omega_1 \oplus \Z \omega_2 \quad
    \text{and}\quad
    \Lambda_1 = \Z \frac{\omega_1}{\ell} \oplus \Z \omega_2
    \quad
    \text{and}\quad
    \Lambda_2 = \Z \omega_1 \oplus \Z \frac{\omega_2}{\ell}\cdot 
  \end{equation}
  The existence of $\omega_1, \omega_2$ can be obtained as follows. First,
  the structure theorem for finitely generated modules over principal
  ideal rings asserts the existence of a basis $\eta_1, \eta_2$ such that
  \[
    \Lambda = \Z \eta_1 \oplus \Z \eta_2 \quad \text{and}
    \quad \Lambda_1 = \Z \frac{\eta_1}{\ell} \oplus \Z \eta_2.
  \]
  Next, we claim that
  \[
    \Lambda_2 = \Z \eta_1 \oplus \Z \frac{u\eta_1 + \eta_2}{\ell},
  \]
  for some $u \in \{0, \dots, \ell-1\}$. Indeed, consider
  $\left(\frac{1}{\ell} \Lambda \right) / \Lambda$, which isomorphic to
  $\F_\ell \times \F_\ell$. In this quotient, $\Lambda_1 / \Lambda$ and
  $\Lambda_2 / \Lambda$ are identified to two $\F_\ell$--subspaces of
  dimension $1$ in direct sum. The subspace $\Lambda_1 / \Lambda$ is
  spanned by the class of $\frac{\eta_1}{\ell}$ and the fact that
  $\Lambda_1 / \Lambda$ and $\Lambda_2 / \Lambda$ are in direct sum in
  $\left(\frac{1}{\ell} \Lambda \right) / \Lambda$ entails that
  $\Lambda_2 / \Lambda$ should be spanned by the class of
  $\frac{u \eta_1 + \eta_2}{\ell}$ for some $u \in \F_\ell$. This
  implies that there exists $u \in \{0, \dots, \ell- 1\}$ such that
  $\frac{u\eta_1 + \eta_2}{\ell} \in \Lambda_2$ and hence
  \[
    \Z \eta_1 \oplus \Z \frac{u \eta_1 + \eta_2}{\ell} \subseteq \Lambda_2.
  \]
  Then, since $\card{(\Lambda_2 / \Lambda)} = \ell$, we can deduce that
  the above inclusion is actually an equality.
  Finally, define $\omega_1, \omega_2$ as
  \[
    \begin{pmatrix}
      \omega_1 \\ \omega_2
    \end{pmatrix}
     \eqdef   \begin{pmatrix}
    1 & u \\ 0 & 1
  \end{pmatrix}
  \cdot
    \begin{pmatrix}
      \eta_1 \\ \eta_2
    \end{pmatrix}.  
  \]
  Note that the above change of variables is given by a matrix in
  $\SL$ and provides a basis for $\Lambda$ which
  satisfies~(\ref{eq:bases}).

\medskip

\noindent {\bf Step 2. The modulus of $a$.}
A classical notion in lattice theory is that of the {\em determinant}
or {\em volume} of the lattice. It can be defined as follows.
Consider $\Lambda = \Z \omega_1 \oplus \Z \omega_2$ and regard $\C$ as
a $2$--dimensional $\R$--vector space with canonical basis $(1, i)$.
Since any basis of $\Lambda$ can be deduced from
$(\omega_1, \omega_2)$ by applying a matrix in $\GL_2(\Z)$, {\em i.e.}
a matrix with determinant $\pm 1$, the quantity
$|\det(\omega_1,\omega_2)|$ is the same for any basis of $\Lambda$.
Hence we denote this quantity $|\det \Lambda|$.
From (\ref{eq:bases}), we have
\begin{equation}\label{eq:same_det}
  \det \Lambda_1 = \frac{1}{\ell} \det \Lambda = \det \Lambda_2.
\end{equation}
Moreover, the multiplication by $a$ map $z \mapsto az$ regarded as an
$\R$--linear endomorphism of $\C$ has determinant $|a|^2$. Indeed, writing
$a = a_0 +DA_1$, the map is represented in the basis $(1,i)$ by the matrix
\[
\begin{pmatrix}
  a_0 & -a_1 \\ a_1 & a_0
\end{pmatrix},
\] whose determinant is $a_0^2 + a_1^2 = |a|^2$.
Next, the
assumption $\Lambda_2 = a \Lambda_1$ together with (\ref{eq:same_det})
give
\[
  \det \Lambda_1 = \det \Lambda_2 = |a|^2 \det \Lambda_1,
\]
which yields $|a|^2=1$.

\medskip

\noindent {\bf Step 3.}  We aim to prove that $a \Lambda =
\Lambda$. Suppose it does not.  Since $\Lambda \subseteq \Lambda_1$,
$\Lambda \subseteq \Lambda_2$ and
$a \Lambda \subseteq a\Lambda_1 = \Lambda_2$, then
$\Lambda + a \Lambda \subseteq \Lambda_2$. Recall that
$\card{\Lambda_2/\Lambda} = \ell$ and $\ell$ is prime. Then, since we
assumed that $\Lambda \varsubsetneq \Lambda + a\Lambda$, we get
$\Lambda + a\Lambda = \Lambda_2$.  Similarly, one deduces that
$a^{-1} \Lambda + \Lambda = \Lambda_1$.  Next, from (\ref{eq:bases}),
we see that $\Lambda_1 + \Lambda_2 = \frac{1}{\ell} \Lambda$ and hence
\[
  a^{-1} \Lambda + \Lambda + a \Lambda = \frac{1}{\ell} \Lambda.
\]
By induction,
\[
  a^{-s}\Lambda + \cdots + a^{-1}\Lambda + \Lambda + a \Lambda +
  \cdots + a^{s}\Lambda = \frac{1}{\ell^{s}}\Lambda.
\]
Therefore, for any $s \geq 0$, there exists a finite sequence
${(\mu^{s}_{i})}_{i=-s}^s$ of elements of $\Lambda$ such that
\[
  \sum_{i=-s}^{s} a^i \mu^{s}_i = \frac{1}{\ell^s} \omega_1. 
\]
Then, for any $N \geq 0$,
\begin{eqnarray*}
  \sum_{s=0}^N \sum_{i=-s}^{s} a^i \mu^{s}_i &=& \sum_{s=0}^N
                                               \frac{1}{\ell^s} \omega_1, \\
  \sum_{i=-N}^{N} a^i \nu_i &=&\sum_{s=0}^N
                                               \frac{1}{\ell^s} \omega_1,
\end{eqnarray*}
where the $\nu_i$'s are in $\Lambda$. When $N$ goes to infinity, the
right hand side is a convergent series. Thus, so does the left hand
side and hence, its general term should go to $0$. From the previous
step, we know that $|a|=1$ and since the $\nu_i$'s are in $\Lambda$
which is discrete, then $\nu_i = 0$ for any sufficiently large $i$.
Therefore, the sequence of partial sums of the left--hand side is
stationary while that of the right--hand side is note.  This is a
contradiction. Therefore $a \Lambda = \Lambda$.
\end{proof}

\begin{lemma}\label{lem:faithful}
  Let $\E_i \eqdef \C / (\Z \oplus \Z i)$ and consider its
  automorphism group $G_i$ induced by the multiplications by
  $\{\pm 1, \pm i\}$ in $\C$.  Then, any $P \in \E_i(\C)$ which has a
  non trivial stabiliser under the action of $G_i$ is in $\E[2]$.

  Similarly, let $\E_\rho \eqdef \C / (\Z \oplus \Z \rho)$, where
  $\rho = e^{\frac{i\pi}{3}}$ with its automorphism group $G_\rho$ induced by
  the multiplications by $\{\pm 1, \pm \rho, \pm \rho^2\}$, then any
  $P \in \E_\rho (\C)$ with a non trivial stabiliser under the action of
  $G_\rho$ is in $\E[6]$.
\end{lemma}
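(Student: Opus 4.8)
The plan is to argue directly with the two lattices, exploiting that they are subrings of $\C$ stable under complex conjugation. Write $\Lambda_i \eqdef \Z \oplus \Z i$ and $\Lambda_\rho \eqdef \Z \oplus \Z \rho$. First one notes that $\Lambda_i = \Z[i]$ is a ring, and so is $\Lambda_\rho = \Z[\rho]$, since the relation $\rho^2 = \rho - 1$ makes it closed under multiplication; moreover both are conjugation-stable, as conjugation sends $i$ to $-i \in \Lambda_i$ and $\rho$ to $1 - \rho \in \Lambda_\rho$. Recall that $G_i$ (resp. $G_\rho$) acts on $\C/\Lambda_i$ (resp. $\C/\Lambda_\rho$) by $z \mapsto uz$ with $u$ ranging over $\{\pm 1, \pm i\}$ (resp. $\{\pm 1, \pm \rho, \pm \rho^2\}$), these being precisely the units of the corresponding ring, equivalently its elements of modulus $1$, by the discussion following Theorem~\ref{thm:automorphisms}. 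A point $P$ represented by $z \in \C$ is fixed by $z \mapsto uz$ exactly when $(u-1)z \in \Lambda$.

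Now suppose $P = z \bmod \Lambda$ has a nontrivial stabiliser, so that $(u-1)z \in \Lambda$ for some unit $u \neq 1$. The key step is this: since $\Lambda$ is a conjugation-stable ring, $\overline{u-1} \in \Lambda$, and multiplying the relation $(u-1)z \in \Lambda$ by it gives $|u-1|^2\, z = (u-1)\overline{(u-1)}\, z \in \Lambda$. As each unit in play has modulus $1$, one has $|u-1|^2 = 2 - 2\,\mathrm{Re}(u)$, and I would just read off the finitely many values. For $\Lambda_i$: this is $4$ if $u = -1$ (but then already $(u-1)z = -2z \in \Lambda_i$) and $2$ if $u = \pm i$; in all cases $2z \in \Lambda_i$, i.e. $P \in \E_i[2]$. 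For $\Lambda_\rho$: it is $4$ if $u = -1$ (again giving $2z \in \Lambda_\rho$), equals $1$ on the two units of order $6$, and equals $3$ on the two units of order $3$; hence $mz \in \Lambda_\rho$ for some $m \in \{1,2,3\}$, and since $\Lambda_\rho$ is a $\Z$-module this forces $6z \in \Lambda_\rho$, i.e. $P \in \E_\rho[6]$.

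I do not anticipate a genuine obstacle. The only substantive point to spell out is the legitimacy of the ``multiply by $\overline{u-1}$'' step, which is exactly where being a conjugation-stable subring of $\C$ is used; after that, all that remains is the table of at most five elementary values of $|u-1|^2$ together with the trivial remark that if a divisor of $6$ annihilates $P$ then so does $6$. A cosmetic preliminary worth recording is that the listed $u$'s are precisely the units of $\Lambda_i$ and $\Lambda_\rho$ (equivalently, their modulus-$1$ elements), which is what fixes the automorphism groups in the first place.
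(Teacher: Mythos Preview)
Your argument is correct and complete. It differs from the paper's in a pleasant way, so a brief comparison is worthwhile.

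The paper argues via the subgroup lattice of the cyclic groups $G_i \cong \Z/4\Z$ and $G_\rho \cong \Z/6\Z$: any nontrivial stabiliser contains a minimal nontrivial subgroup, and it suffices to treat those. For $G_i$ the only minimal nontrivial subgroup is $\{\pm 1\}$, giving $2z \in \Lambda_i$ immediately. For $G_\rho$ the minimal nontrivial subgroups are $\{\pm 1\}$ and $\{1,\rho^2,\rho^4\}$; the first gives $2$--torsion as before, while for the second the paper writes $z = a + b\rho^2$ with $a,b \in \R$, expands $\rho^2 z$ using $1+\rho^2+\rho^4=0$, and solves a small linear system to obtain $3a,3b \in \Z$.

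Your route exploits instead that $\Lambda_i = \Z[i]$ and $\Lambda_\rho = \Z[\rho]$ are conjugation-stable subrings of $\C$, so from $(u-1)z \in \Lambda$ one may multiply by $\overline{u-1} \in \Lambda$ and land on the integer $|u-1|^2 = 2 - 2\,\mathrm{Re}(u)$. This replaces the coordinate computation by a short table of norms and treats all nontrivial units on an equal footing; it also makes transparent why the order-$6$ units force $P=O$ (since $|\rho - 1|^2 = 1$), matching the paper's separate observation that only the origin has full stabiliser. The paper's approach, by contrast, is more bare-hands and does not invoke the ring structure at all, which keeps the prerequisites minimal. Both are short; yours is the more uniform of the two.
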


\begin{proof}
  In the case $\E_i$, denote by $\Lambda_i \eqdef \Z \oplus \Z i$.
  Since $G_i$ is cyclic of order $4$, its only nontrivial subgroups
  are $\{\pm 1\}$ and $G_i$ itself. A point $P \in \E_i (\C)$
  stabilised by $\{\pm 1\}$ corresponds to $z \in \C$ such that
  $z \equiv -z \mod \Lambda_i$. That is to say $2z \in \Lambda_i$ and
  hence $P \in \E_i[2]$. Similarly if $P$ is stabilised by all $G_i$
  it is {\em a fortiori} stabilised by $\{\pm 1\}$ and hence should be
  in $\E_i [2]$.
  
  Consider now the case of $\E_\rho$. Denote by $\Lambda_\rho \eqdef
  \Z \oplus \Z \rho$. Since $G_\rho$ is cyclic of
  order $6$, its only possible nontrivial subgroups are $\{\pm 1\}$,
  $\{1, \rho^2, \rho^4\}$ and $G_\rho$ itself. Let us consider points
  which are stabilised by one of these groups.
  
  Let $P \in \E_\rho (\C)$ stabilised by $\{\pm 1\}$, then the very
  same reasoning as for $\E_i$ yields $P \in \E_\rho [2]$.

  Let $P \in \E_\rho(\C)$ stabilised by $\{1, \rho^2, \rho^4\}$. This corresponds
  to $z \in \C$ satisfying $\rho^2 z \equiv z \mod \Lambda_\rho$.
  Writing $z = a + b \rho^2$ for some $a, b \in \R$ and using the relation
  $1+\rho^2 +\rho^4 = 0$, we get
  \[
    a + \rho^2 b \equiv -b + \rho^2(a-b) \mod \Lambda_\rho.
  \]
  This entails that
  \[
    \left\{
      \begin{array}{rcl}
        -b & = & a + \mu \\
        a-b & = & b + \nu,
      \end{array}
    \right.
  \]
  where $\mu, \nu \in \Z$. By elimination, we deduce that $3a \in \Z$
  and $3b \in \Z$, that is to say $z \in \frac{1}{3} \Lambda_\rho$ and
  hence $P \in \E_\rho [3]$.

  Finally, the previous discussion entails that a point stabilised by
  the whole $G_\rho$ should be in $\E_\rho[2] \cap \E_{\rho}[3]$, and hence
  is nothing but $O_{\E_\rho}$.
\end{proof}

\begin{proof}[Proof of Theorem~\ref{thm:genus_X0ell}]
  The idea is to consider the projection map
  $\pi : X_0(\ell) \rightarrow X_0(1)$, which sends a class of
  isomorphisms of isogenies $\phi : \E \rightarrow \E'$ onto the
  isomorphism class of $\E$.  This map is algebraic (this will appear
  more naturally in \S~\ref{ss:mod_eq}).  The objective is to apply
  Riemann Hurwitz formula (Theorem~\ref{thm:RH}) to $\pi$ in order to
  compute the genus of $X_0(\ell)$.

  \medskip

\noindent   {\bf Step 1. The degree of $\pi$.}  The degree of $\pi$ is the
  generic number of pre-images of a point of $X_0(1)$. Such a point
  corresponds to a curve $\E$ up to isomorphism and its pre-image is
  the set of isomorphism classes of $\ell$--isogenies
  $\E \rightarrow \E'$ or equivalently, the ismomorphism classes of
  pairs $(\E, C)$ where $C$ is a subgroup of cardinality $\ell$ of
  $\E [\ell]$.  From Lemma~\ref{lem:isomorphic_subgroups}, if $\E$ has
  no nontrivial automorphism, then two distinct subgroups $C_1, C_2$
  provide non isomorphic pairs $(\E, C_1), (\E, C_2)$. Thus, in this
  situation, the number of pre-images of $\E$ by $\pi$ corresponds to
  the number of subgroups of cardinality $\ell$ in $\E [\ell]$. Since
  $\ell$ is prime, then from Theorem~\ref{thm:torsion_of_elliptic},
  $\E[\ell] \simeq \F_\ell \times \F_\ell$ and hence is a vector space
  of dimension $2$ over $\F_\ell$. Next, a subgroup of cardinality
  $\ell$ of $\E [\ell]$ is nothing but a subspace of dimension $1$ and
  the number of subspaces of dimension $1$ (\ie{} of lines) of
  $\F_\ell \times \F_\ell$ equals $\card{\P^1(\F_{\ell}) = \ell + 1}$.
  Thus,
  \[
    \deg \pi = \ell + 1.
  \]

  Now, the map is ramified at the points corresponding to curves with
  nontrivial automorphisms and possibly the point at inifinity. From
  Theorem~\ref{thm:automorphisms}, the curves with nontrivial
  automorphisms correspond to
  the tori $\C/(\Z \oplus \Z i)$ and $\C/ (\Z \oplus \Z \rho)$, where
  $\rho = e^{\frac{i\pi}{3}}$. For these tori, we need to understand the action
  of the automorphisms on the $\ell$--torsion.

  \medskip
  
  \noindent {\bf Step 2. Ramification at $\C/(\Z \oplus \Z i)$.} The
  curve is equipped with a nontrivial automorphism $\eta$ of order
  $4$, which corresponds to the multiplication by $i$ in $\C$. This
  automorphism acts on the $\ell$--torsion and, from
  \cite[Thm.~III.4.8]{silverman2009book}, such an automorphism is a
  group automorphism and hence its action on $\E[\ell]$ regarded as an
  $\F_\ell$--vector space is $\F_{\ell}$--linear.  We denote by
  $\eta_{\ell}$, the automorphism $\eta$ restricted to $\E [\ell]$.
  From Lemma~\ref{lem:faithful}, since $\ell > 3$ any point in
  $\E [\ell] \setminus \{O_\E\}$ has has an orbit of cardinality $4$
  under $\eta_\ell$. Therefore, $\eta_\ell$ has order $4$ and two
  situations may occur. Either $\ell \equiv 1 \mod 4$, then
  $\F_{\ell}$ contains fourth roots of $1$ and $\eta_\ell$ regarded as
  an $\F_\ell$--automorphism of $\F_\ell \times \F_\ell$ is
  diagonalisable as
  \[
    \begin{pmatrix}
      \iota & 0 \\ 0 & -\iota
    \end{pmatrix},
  \]
  where $\iota$ denotes a primitive fourth root of $1$. In this
  situation, $\eta_\ell$ acts on the lines of $\F_\ell \times \F_\ell$
  by fixing the two lines corresponding to the eigenspaces of
  $\eta_\ell$ and any other line has an orbit of cardinality $2$,
  indeed $\eta^2_{\ell} = -\text{Id}$, which leaves any line
  invariant.
  Two lines of $\E [\ell]$ in a same orbit under $\eta_\ell$
  correspond to a same point in $X_0(\ell)$. This point is a ramification
  point with ramification index $2$.
  Therefore, if $\ell \equiv 1 \mod 4$, then there are $2$ unramified
  points in the pre-image of the isomorphism class of $\E$ by $\pi$
  and $\frac{\ell - 1}{2}$ ramified points with ramification index $2$.

  Otherwise $\ell \equiv 3 \mod 4$. In this situation $\eta_\ell$ has
  no eigenspace in $\E[\ell]$ and the orbit of any line has
  cardinality $2$. Thus, there are $\frac{\ell + 1}2$ points above
  $\E$ which are all ramified with ramification index $2$.

  \medskip

  \noindent {\bf Step 3. Ramification at $\C/ (\Z \oplus \Z \rho)$.}
  Here we have an automorphism $\eta$ of order $6$ and denote again by
  $\eta_\ell$ its restriction to $\E [\ell]$. Here again, from
  Lemma~\ref{lem:faithful}, we know that $\eta_\ell$ has also order
  $6$.  In this situation, if $\ell \equiv 1 \mod 3$, then $\F_\ell$
  contains sixth roots of unity and $\eta_\ell$ is diagonalisable.
  Therefore, the two eigenspaces of $\eta_\ell$ are left invariant and
  any other $\F_\ell$--line of $\E[\ell]$ has an orbit of cardinality
  $3$. Indeed, here again $\rho^3 = - \text{Id}$ and hence leaves any line
  globally invariant.  In such a situation, the pre-image of $\E$
  consists in $2$ unramified points corresponding to the two
  eigenspaces of $\eta_\ell$ in $\E [\ell]$ and $\frac{\ell - 1}{3}$
  points with ramification index $3$.

  If $\ell \equiv 2 \mod 3$, then any line of $\E [\ell]$ has an orbit
  of cardinality $3$ under the action of $\eta_\ell$ and hence the
  pre-image of $\E$ by $\pi$ consists in $\frac{\ell + 1}{3}$ points,
  all with ramification index $3$.

  \medskip

\noindent  {\bf Step 4. Ramification at infinity.}
  The point at infinity of $X_0(1)$ is the quotient of $\P^1(\Q)$
  under the action of $\SL$, which, from Proposition~\ref{prop:cusps},
  consists in a single orbit.
  We wish to estimate the number of orbits in $\P^1(\Q)$ under the
  action of $\Gamma_0(\ell)$. We claim that their number is $2$, namely,
  the orbit of $(0:1)$ and that of $(1:0)$.
  Indeed,
  \[
    \Gamma_0(\ell) \cdot (0:1) = \{(b:d) \in \P^1(\Q) \ \text{with}\
    \gcd(b,d) = 1 \ \text{and}\ d\ \text{prime to }\ell\}
  \]
  and
  \[
    \Gamma_0(\ell) \cdot (1:0) = \{(a:c) \in \P^1(\Q) \ \text{with}\
    \gcd(a,c) = 1 \ \text{and}\ \ell\ \text{dividing } c\}.
  \]
  One easily sees that the two orbits form a partition of $\P^1(\Q)$. This
  entails that the pre-image of the point at infinity of $X_0(1)$
  consists in two points $P,Q$ whose ramification indexes satisfy
  $e_P + e_Q = \ell + 1$.

  \medskip

\noindent  {\bf Final step. Computation of the genus.}
  Denote by $g_\ell$ the genus of $X_0(\ell)$ and by $g_1 = 0$ that of $X_0(1).$
  Riemann--Hurwitz formula asserts that
  \[
   2g_\ell - 2 = (2 g_1 - 2)(\ell + 1) + \nu_i + \nu_\rho + \nu_{\infty}, 
 \]
 where $\nu_i,\nu_\rho$ and $\nu_{\infty}$ are the respective contributions
 of the ramifications above $\C/(\Z \oplus \Z i)$, $\C/(\Z \oplus\Z \rho)$
 and the point at infinity.
 We get
 \[
   \nu_2 = \left\{
     \begin{array}{ccl}
       \frac{\ell-1}{2} & \text{if} & \ell \equiv 1 \mod 4\\
       \frac{\ell + 1}{2} & \text{if} & \ell \equiv 3 \mod 4
     \end{array}
   \right.,
   \quad
      \nu_3 = \left\{
     \begin{array}{ccl}
       2\frac{\ell-1}{3} & \text{if} & \ell \equiv 1 \mod 3\\
       2\frac{\ell + 1}{3} & \text{if} & \ell \equiv 2 \mod 3
     \end{array}
   \right.
 \quad
 \text{and}
 \quad
   \nu_{\infty} = \ell - 1.
 \]
 An easy but cumbersome calculation treating separately
 the four cases $\ell \equiv 1, 5, 7, 11 \mod 12$ yields the expected result.
\end{proof}

\subsection{The modular equation}\label{ss:mod_eq}
To conclude this section, we give a statement whose proof is omitted
but which may help the reader to be convinced that $X_0(\ell)$ has a
structure of algebraic curve. We refer the reader to
{\cite[Thm.~6.1]{milneMF}} for a proof.

\begin{theorem}
  There exists an irreducible polynomial $\Phi_\ell \in \Z [x,y]$ such
  that for any pair $\E, \E'$ of elliptic curves related with a degree
  $\ell$ isogeny $\E \rightarrow \E'$, then
  $\Phi_\ell(j(\E), j(\E')) = 0$.
\end{theorem}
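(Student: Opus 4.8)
The plan is to construct $\Phi_\ell$ as the classical \emph{modular polynomial}, building it over $\C$ from the $j$--function and then checking that it has integer coefficients. Recall from Theorem~\ref{thm:isogenies_as_quotients} and the lattice description of complex elliptic curves that a degree--$\ell$ isogeny $\E\to\E'$ over $\C$ is, up to isomorphism, given by a pair of lattices $\Lambda\subseteq\Lambda'$ with $[\Lambda':\Lambda]=\ell$, with $\E\simeq\C/\Lambda$ and $\E'\simeq\C/\Lambda'$, so that $j(\E)=j(\C/\Lambda)$ and $j(\E')=j(\C/\Lambda')$. Normalising $\Lambda'=\Z\oplus\Z\tau$ with $\tau\in\HP$, the sublattices of $\Lambda'$ of index $\ell$ are (exactly the $\ell+1$ lines of $\Lambda'/\ell\Lambda'\simeq\F_\ell\times\F_\ell$ counted in the proof of Theorem~\ref{thm:genus_X0ell}) the lattices $\Z\oplus\Z\,\ell\tau$ and $\Z\ell\oplus\Z(\tau+k)$ for $k=0,\dots,\ell-1$; rescaling each (when needed) to a normalised lattice, the corresponding $j$--invariants are
\[
  j_\infty(\tau)=j(\ell\tau)\qquad\text{and}\qquad j_k(\tau)=j\!\left(\tfrac{\tau+k}{\ell}\right),\quad k=0,\dots,\ell-1 .
\]

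I would then set $F(X,\tau)\eqdef\bigl(X-j_\infty(\tau)\bigr)\prod_{k=0}^{\ell-1}\bigl(X-j_k(\tau)\bigr)$, a monic degree--$(\ell+1)$ polynomial in $X$ whose coefficients are holomorphic functions of $\tau\in\HP$. Since $\SL$ permutes the $\ell+1$ index--$\ell$ sublattices of $\Z\oplus\Z\tau$, it permutes $\{j_\infty,j_0,\dots,j_{\ell-1}\}$, so each coefficient of $F$ is an $\SL$--invariant holomorphic function on $\HP$. Passing to $q$--expansions ($q=e^{2\pi i\tau}$): $j(\tau)=q^{-1}+744+\cdots$ has coefficients in $\Z$, $j_\infty$ is obtained from it by $q\mapsto q^{\ell}$, and $j_k(\tau)=\zeta_\ell^{-k}q^{-1/\ell}+744+\cdots$ with $\zeta_\ell=e^{2\pi i/\ell}$. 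The elementary symmetric functions of the $j_k$ are invariant under $q^{1/\ell}\mapsto\zeta_\ell q^{1/\ell}$, hence are genuine Laurent series in $q$ with finite polar part and coefficients in $\Z[\zeta_\ell]$, and multiplying by $X-j_\infty$ preserves this. Thus each coefficient of $F$ is an $\SL$--invariant holomorphic function on $\HP$ that is meromorphic at the cusp, hence a polynomial in $j$ (because $X_0(1)\simeq\P^1(\C)$ with $j$ a coordinate having a simple pole at infinity, \cite[Prop.~2.21]{milneMF}). Comparing $q$--expansions shows these polynomials have coefficients in $\Z[\zeta_\ell]$, and invariance under $\Gal(\Q(\zeta_\ell)/\Q)$, which only permutes the $j_k$, forces them into $\Z$. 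This produces $\Phi_\ell\in\Z[X,Y]$, monic of degree $\ell+1$ in $X$, with $F(X,\tau)=\Phi_\ell(X,j(\tau))$.

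The defining property is then immediate over $\C$: given an $\ell$--isogeny $\E\to\E'$ with kernel $C$, write $\E=\C/\Lambda$ and $\E'=\E/C=\C/\Lambda'$ with $\Lambda\subseteq\Lambda'$ of index $\ell$; normalising $\Lambda'=\Z\oplus\Z\tau$, the lattice $\Lambda$ is homothetic to one of the normalised sublattices above, so $j(\E)=j(\C/\Lambda)\in\{j_\infty(\tau),j_0(\tau),\dots,j_{\ell-1}(\tau)\}$, whence $\Phi_\ell(j(\E),j(\E'))=F\bigl(j(\E),\tau\bigr)=0$. For irreducibility, I would use that, $\ell$ being prime, the reduction $\SL\twoheadrightarrow\SLl$ makes $\SL$ act transitively on $\P^1(\F_\ell)$, hence transitively on the $\ell+1$ sublattices and on $\{j_\infty,j_0,\dots,j_{\ell-1}\}$; equivalently $j_\infty$ is a modular function for $\Gamma_0(\ell)$ generating the function field of $X_0(\ell)$ over $\C(j)$, an extension of degree $[\SL:\Gamma_0(\ell)]=\ell+1$ (the degree of the map $\pi$ in the proof of Theorem~\ref{thm:genus_X0ell}), and the $j_i$ are its full set of conjugates over $\C(j)$. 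Hence $\Phi_\ell(X,j)=\prod_i\bigl(X-j_i\bigr)$ is the minimal polynomial of $j_\infty$ over $\C(j)$, so it is irreducible in $\C(j)[X]$; being monic in $X$ it is primitive, so by Gauss's lemma it is irreducible in $\Z[X,Y]$.

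The main obstacle is the middle step: the $q$--expansion bookkeeping needed to see that the $q^{1/\ell}$--terms really cancel in the symmetric functions, that only finitely many negative powers survive, and that the coefficients descend from $\Z[\zeta_\ell]$ to $\Z$ — together with the deeper input required if one also wants the identity $\Phi_\ell(j(\E),j(\E'))=0$ to persist for elliptic curves in characteristic $p$ (which is what ultimately makes $\Phi_\ell$ useful in these notes): that amounts to the good reduction of the modular equation away from $\ell$, the ``reduction modulo $p$'' mechanism announced at the beginning of Section~\ref{sec:elliptic}, for which one invokes \cite[Thm.~6.1]{milneMF}.
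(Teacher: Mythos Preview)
The paper does not prove this theorem: it explicitly omits the proof and refers the reader to \cite[Thm.~6.1]{milneMF}. Your sketch is exactly the classical construction of the modular polynomial that one finds in that reference (build $F(X,\tau)$ from the $\ell+1$ conjugates of $j(\ell\tau)$, use $\SL$--invariance to write the coefficients as polynomials in $j$, use the $q$--expansion principle to get integrality, and deduce irreducibility from the transitivity of $\SL$ on $\P^1(\F_\ell)$), so your approach is in line with the cited source rather than with anything the paper itself does. Your closing caveat is also well placed: the statement as written in the paper is about elliptic curves over an unspecified base, and your argument settles it over~$\C$; extending it to characteristic~$p$ genuinely requires the reduction theory that the paper alludes to but does not develop.
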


\begin{remark}
  A database of the polynomials $\Phi_\ell$ for small values of $\ell$
  is available on Andrew Sutherland's webpage: 
  \href{https://math.mit.edu/~drew/ClassicalModPolys.html}{\tt https://math.mit.edu/$\sim$drew/ClassicalModPolys.html}
\end{remark}

Let us give some comments about this statement. First, note that the
projective closure of the complex curve of equation $\Phi_{\ell}(x,y)=0$ is a
``singular model'' for $X_0(\ell)$. Indeed, any point of $X_0(\ell)$
corresponds to an isomorphism class of $\ell$--isogeny $\E \rightarrow \E'$.
This yields a rational map
\[
  \map {X_0(\ell)}
  {\P^2(\C)}
  {(\E \rightarrow \E')}
  {(j(\E):j(\E'):1).}
\]
The image of this map is contained into the curve with equation
$\Phi_\ell (x,y) = 0$. However, this latter curve is full of
singularities and hence is not isomorphic to $X_0(\ell)$.
Nevertheless, (and this is far from being obvious) this permits to
deduce that $X_0(\ell)$ is itself defined over $\Q$ and hence, its
reduction modulo $p$ makes sense.

An interesting fact is that, since $\Phi_\ell \in \Z[x,y]$, for any
pair of $\ell$--isogenous curves $\E \rightarrow \E'$ over $\F_p$ we
have $\Phi_\ell (j(\E), j(\E')) \equiv 0 \mod p$.  Moreover, if $\ell$
and $p$ are prime to each other, it is known that the polynomial
$\Phi_\ell$ is irreducible modulo $p$ (see for instance
\cite[Thm.~5.9]{moreno1990book}). Thus, the curve over $\F_p$ of equation
$\Phi_\ell (x,y) = 0$ turns out to be a singular model of a smooth
curve over $\F_p$, that we will also denote by $X_0(\ell)$ such that
$X_0(\ell)(\overline{\F}_p)$ parameterises $\ell$--isogenies
$\E \rightarrow \E'$ over $\overline{\F}_p$ up to isomorphism.  These
curves over $\F_p$ will be the objects of interest in order to prove
the main theorem of this course, namely Theorem~\ref{thm:main}.

Finally, the reader interested in a rigorous study of the reductions
of modular curves cannot avoid the language of schemes. For such
a development, we refer the reader to the article of Celgene and Rapoport
\cite{deligne1973} or the book of Katz and Mazur \cite{katz1985book}.

 \section{Proof of the main Theorem}\label{sec:proof_of_main}
Now, we almost have the material to prove Theorem~\ref{thm:main}.  We
have our family of curves $X_0(\ell)$ for $\ell$ a prime integer
distinct from the characteristic $p$.

\subsection{Genus of modular curves over finite fields}
Let us briefly discuss the genus of the curve. The discussion to
follow is far from being trivial. Thus, the reader is encouraged first
to directly admit the conclusion. Namely that the genus of a modular
curve over a finite field is that of its complex counterpart.  Let us
briefly sketch the reasons why this holds.

It is known (see for instance in \cite[Thm.~5.9]{moreno1990book})
that, the curve $X_0(\ell)$ has a smooth projective model described by
equation with coefficients in $\Z$ and whose reduction modulo $p$ is
smooth too.

Next, as already mentioned in Remark~\ref{rem:arith_genus}, two different
notions of genus are associated to a curve, the {\em arithmetic} genus
$p_a$ and the {\em geometric} one $g$. The genus introduced by
Definition~\ref{def:genus} in \S~\ref{ss:genus_and_RR} is the
geometric one. The arithmetic genus, which can be defined for instance
from the Hilbert function of the variety
\cite[Ch.~IV]{hartshorne1977book}, is always larger than or equal to
the geometric one and they coincide if and only if the curve is
smooth.

Next, Grauert Theorem \cite[Cor.~III.12.9]{hartshorne1977book} permits
to assert that the reduction modulo $p$ of the aforementioned model
$X_0(\ell)$ has the same arithmetic genus as the complex curve
itself. Moreover, since this model and its reduction are smooth, they
also have the same geometric genus.  Therefore, the genus of the curve
$X_0(\ell)$ over $\F_p$ is the same as that of its complex counterpart
and hence is given by Theorem~\ref{thm:genus_X0ell}.

\subsection{The locus of supersingular curves}
There remains to get an estimate of the
number of rational points of such curves.  For this we will focus on
$\F_{p^2}$--points since their number can be bounded from below using
the two following statements.

\begin{proposition}
  Let $\E$ be a supersingular elliptic curve over $\overline{\F}_p$,
  then $\E$ is defined over $\F_{p^2}$.
\end{proposition}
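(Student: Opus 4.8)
The plan is to use the characterization of supersingularity via the Frobenius endomorphism together with Theorem~\ref{thm:torsion_of_elliptic}. Let $\E$ be a supersingular elliptic curve defined over $\overline{\F}_p$. First I would observe that $\E$, being defined over the algebraically closed field $\overline{\F}_p = \bigcup_m \F_{p^m}$, is already defined over some finite field $\F_{p^N}$; this is immediate since a Weierstrass equation $y^2 = x^3 + Ax + B$ has only finitely many coefficients, each lying in some $\F_{p^{m_i}}$. The real content is to show that one can take $N = 2$, \emph{i.e.} that the $j$--invariant $j(\E)$ lies in $\F_{p^2}$ (since by the Proposition on the $j$--invariant, a curve with $j$--invariant in a field $\K$ is isomorphic over $\K$ to a curve defined over $\K$, up to the twist subtlety which I will address).

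The key step is to argue that the $p^2$--power Frobenius acts on $\E$ as multiplication by an integer, which forces $j(\E)$ to be fixed by $\Gal(\overline{\F}_p/\F_{p^2})$. Concretely, working over a field $\F_{p^N}$ of definition with $N$ even (enlarge if necessary), let $\pi$ be the $p^N$--power Frobenius isogeny $\E \to \E$; its kernel is trivial on $p$--torsion precisely because $\E$ is supersingular, \emph{i.e.} $\E[p] = 0$ by Theorem~\ref{thm:torsion_of_elliptic}. One knows (this I would cite from Silverman's book, \emph{e.g.} \cite[Ch.~V]{silverman2009book}) that for a supersingular curve over $\F_{p^2}$ the Frobenius satisfies $\pi^2 = \pm p^{?}$ in the endomorphism ring — more precisely the trace of the $p^2$--Frobenius on a supersingular curve is $\pm 2p$ or $0$, and in the relevant case $\pi = \pm p$ as an endomorphism. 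The cleanest route: show that the $j$--invariant of a supersingular curve satisfies $j(\E)^{p^2} = j(\E)$. Since supersingularity is preserved by the $p$--power Frobenius (the Frobenius twist $\E^{(p)}$ is again supersingular, as it has the same $p$--torsion structure), and since there are only finitely many supersingular $j$--invariants in $\overline{\F}_p$ (a standard fact, with the count $\approx p/12$), the Frobenius $j(\E) \mapsto j(\E)^p$ permutes this finite set; but in fact more is true and I would invoke that every supersingular $j$--invariant lies in $\F_{p^2}$ directly from the theory (\cite[Thm.~V.3.1]{silverman2009book}).

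Granting $j(\E) \in \F_{p^2}$, I would finish as follows: there exists an elliptic curve $\E_0$ defined over $\F_{p^2}$ with $j(\E_0) = j(\E)$, and by the Proposition on $j$--invariants, $\E$ and $\E_0$ become isomorphic over $\overline{\F}_p$. A priori this only shows $\E$ is a twist of $\E_0$; but the twists of $\E_0$ are themselves classified by a Galois cohomology set which, for the curves with $j \neq 0, 1728$, is $\F_{p^2}^\times / (\F_{p^2}^\times)^2$, and each such twist is again defined over $\F_{p^2}$. For the special $j$--invariants $0$ and $1728$ the twist groups are larger (of order $6$ and $4$ respectively, by the automorphism count at the end of Section~\ref{sec:elliptic}), but they are still finite and the twists are defined over $\F_{p^2}$ since $\F_{p^2}$ contains enough roots of unity — here I should be slightly careful, as in characteristic $p$ one may need $p \neq 2,3$, which holds for the relevant primes. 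So $\E$ is defined over $\F_{p^2}$, as claimed.

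The main obstacle I anticipate is the rigorous proof that every supersingular $j$--invariant lies in $\F_{p^2}$: this is the genuinely non-trivial input and I would not reprove it from scratch but cite Silverman. A secondary, more technical point is handling the twists at $j = 0$ and $j = 1728$ cleanly without a long cohomological digression; the safest phrasing is to note that any twist of a curve defined over $\F_{p^2}$ by an automorphism defined over $\overline{\F}_p$ is itself defined over a finite extension, and then observe that the specific twists arising are defined over $\F_{p^2}$ because the relevant automorphisms $(x,y)\mapsto(\zeta^2 x, \zeta^3 y)$ involve roots of unity $\zeta$ of order dividing $6$, all of which lie in $\F_{p^2}$ when $p \equiv 1 \pmod 6$ and can otherwise be absorbed — alternatively, one cites \cite[Prop.~X.5.4]{silverman2009book} for the statement that all twists are defined over the base field after a suitable finite extension, combined with the explicit bound that this extension has degree dividing $\#\Aut(\E)$.
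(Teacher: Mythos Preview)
Your proposal does not actually prove the statement: you identify the target $j(\E)^{p^2}=j(\E)$ correctly, but every route you sketch toward it either assumes a field of definition already (your trace-of-Frobenius remarks presuppose $\E$ is over $\F_{p^2}$) or simply cites \cite[Thm.~V.3.1]{silverman2009book}, which \emph{is} the statement in question. The finite-orbit observation (Frobenius permutes the finitely many supersingular $j$-invariants) only gives $j(\E)\in\F_{p^M}$ for some unspecified $M$, not $M=2$. So as written this is a reference, not a proof.

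The paper's argument is short and self-contained, and you had the ingredients for it. Since $\E$ is supersingular, $\E[p]=0$, so the multiplication-by-$p$ map $[p]:\E\to\E$ is purely inseparable of degree $p^2$. Factor $[p]=\hat\phi\circ\phi$ where $\phi:\E\to\E^{(p)}$ is the $p$-power Frobenius (degree $p$, purely inseparable) and $\hat\phi:\E^{(p)}\to\E$ is its dual (degree $p$). Since $[p]$ is purely inseparable and $\deg\hat\phi=p$, the dual $\hat\phi$ must itself be purely inseparable, hence factors through the $p$-power Frobenius $\E^{(p)}\to\E^{(p^2)}$; comparing degrees, $\hat\phi$ is that Frobenius composed with an isomorphism $\E^{(p^2)}\xrightarrow{\sim}\E$. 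Thus $j(\E)^{p^2}=j(\E^{(p^2)})=j(\E)$, i.e.\ $j(\E)\in\F_{p^2}$.

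Your extended discussion of twists is more than is needed here. In the paper's usage (and in the subsequent application to counting $\F_{p^2}$-points on $X_0(1)$), ``defined over $\F_{p^2}$'' means the $\overline{\F}_p$-isomorphism class admits a model over $\F_{p^2}$, which follows immediately from $j(\E)\in\F_{p^2}$ via the Proposition on the $j$-invariant. The paper does not attempt to show that every $\overline{\F}_p$-twist descends, and it is not required for what follows.
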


\begin{proof}
  By definition, a supersingular curve $\E$ satisfies $\E[p] = \{0\}$.
  Therefore, the multiplication by $p$ map $[p] : E \rightarrow E$ is
  totally inseparable.

  Consider now the Frobenius map
  \[
    \phi : \map{\E}{\E^{(p)}}{(x,y)}{(x^p,y^p).}
  \]
  It is a degree $p$ isogeny, hence it has a dual isogeny $\hat \phi$
  such that
  $\hat \phi \circ \phi = [p]$. Since $[p]$ is totally inseparable, $\hat \phi$
  should be inseparable either and hence, so should be the Frobenius map
  \[
    \hat \phi : \map{\E^{(p)}}{\E^{(p^2)}}{(x, y)}{(x^p,y^p).}
  \]
  Thus, $\E^{(p^2)} = \E$ and hence $\E$ is defined over $\F_{p^2}$.
\end{proof}

\begin{theorem}\label{thm:supsersing_classes}
  The number of $\overline{\F}_p$--isomorphism classes of supersingular
  elliptic curves over $\overline{\F}_p$ equals
  \[
    \left \lfloor \frac{p}{12} \right \rfloor + \left\{
      \begin{array}{ccccc}
        0 & \text{if} & p & \equiv & 1 \mod 12 \\
        1 & \text{if} & p & \equiv & 5 \mod 12 \\
        1 & \text{if} & p & \equiv & 7 \mod 12 \\
        2 & \text{if} & p & \equiv & 11 \mod 12.
      \end{array}
      \right.
  \]
\end{theorem}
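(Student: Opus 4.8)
The plan is to count supersingular $j$-invariants by exploiting the identity provided by Riemann--Hurwitz applied to $\pi : X_0(\ell) \to X_0(1)$, combined with the fact that the supersingular locus behaves like a "zero-dimensional scheme of fixed degree" under isogeny. First I would recall the key structural input, which is standard (see \cite[Ch.~V]{silverman2009book}): there is a polynomial $H_p(x) \in \F_p[x]$, the \emph{supersingular polynomial}, whose roots in $\overline{\F}_p$ are exactly the supersingular $j$-invariants, and whose degree is $\left\lfloor \frac{p+1}{12}\right\rfloor$-ish; the precise degree is $\frac{p-1}{12}$ up to the correction terms from $j=0$ and $j=1728$. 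The cleanest route to that degree is via the Eichler--Deuring mass formula: if $E_1, \dots, E_h$ are representatives of the supersingular isomorphism classes over $\overline{\F}_p$, then
\[
  \sum_{i=1}^{h} \frac{1}{\#\Aut(E_i)} = \frac{p-1}{24}\cdot
\]
I would take this mass formula as the main black box (citing Silverman), since proving it from scratch requires quaternion orders and is well beyond these notes.

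Given the mass formula, the theorem is then pure bookkeeping. By the discussion of automorphisms in \S\ref{sec:elliptic} (Theorem~\ref{thm:automorphisms} and the remark following \eqref{eq:curve_with_auto}), over $\overline{\F}_p$ with $p>3$ every elliptic curve has $\#\Aut(E)=2$ except possibly the curve with $j=1728$ (which has $\#\Aut = 4$ when $p \equiv 1 \bmod 4$, else $\#\Aut = 2$) and the curve with $j=0$ (which has $\#\Aut = 6$ when $p \equiv 1 \bmod 3$, else $\#\Aut = 2$). One also needs the classical facts: $j=0$ is supersingular iff $p \equiv 2 \bmod 3$, and $j=1728$ is supersingular iff $p \equiv 3 \bmod 4$ --- again citable from \cite[Ch.~V]{silverman2009book}, or deducible from the Hasse invariant. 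Write $h$ for the number of supersingular classes and let $h'$ be the number of those classes with $j \notin \{0,1728\}$, so $h = h' + \varepsilon_0 + \varepsilon_{1728}$ where $\varepsilon_0 \in \{0,1\}$ records whether $0$ is supersingular and likewise $\varepsilon_{1728}$. Splitting the mass formula according to whether $p\bmod{12}$ equals $1,5,7,11$, the contribution of $j=0$ and $j=1728$ to the left side is respectively $0 + \tfrac14 = \tfrac14$ for $p\equiv 5$; $\tfrac16 + 0 = \tfrac16$ for $p \equiv 7$; $\tfrac16 + \tfrac14 = \tfrac{5}{12}$ for $p \equiv 11$; and $0$ for $p \equiv 1$ (neither is supersingular). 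In each case $\tfrac12 h' = \tfrac{p-1}{24} - (\text{that contribution})$, which gives $h' = \tfrac{p-1}{12} - c$ with $c = 0, \tfrac12, \tfrac13, \tfrac56$ respectively; adding back $\varepsilon_0 + \varepsilon_{1728} = 0, 1, 1, 2$ and checking each residue class that $\tfrac{p-1}{12} - c + (\varepsilon_0+\varepsilon_{1728})$ equals $\lfloor p/12 \rfloor + (0,1,1,2)$ finishes the proof.

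The step I expect to be the main obstacle --- or rather the main choice point --- is which deep input to import: the Eichler--Deuring mass formula is the most economical, but if the author prefers to stay within the geometric framework already built, an alternative is to relate the count to the degree of the divisor of supersingular points as a "correspondence," using the modular polynomial $\Phi_\ell$ and the known ramification data of $\pi$ from the proof of Theorem~\ref{thm:genus_X0ell}. That second route is more self-contained with respect to the excerpt but requires carefully tracking multiplicities at $j=0$ and $j=1728$ in the reduction mod $p$, which is exactly where the $p \bmod 12$ case split originates; the arithmetic there is cumbersome but elementary. Either way, the genuinely non-trivial facts (degree of the supersingular polynomial, or equivalently the mass formula) must be quoted rather than proved, and the remaining content is the four-case modular arithmetic sketched above.
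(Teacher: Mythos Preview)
The paper does not actually prove this theorem: its entire proof reads ``See \cite[Thm.~V.4.1]{silverman2009book}.'' Your plan, which imports the Eichler--Deuring mass formula from Silverman and then does the four-case bookkeeping, is precisely the argument Silverman gives, so you are aligned with the paper's intent and in fact supply more detail than the paper does.

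Two small slips in your bookkeeping are worth fixing. First, over $\overline{\F}_p$ with $p>3$ the automorphism group sizes do \emph{not} depend on $p \bmod 4$ or $p \bmod 3$: one always has $\#\Aut = 4$ for $j=1728$ and $\#\Aut = 6$ for $j=0$, since $\overline{\F}_p$ contains all the needed roots of unity. What depends on $p$ is only whether those $j$-values are supersingular. Second, you have swapped the contributions for $p\equiv 5$ and $p\equiv 7$: for $p\equiv 5 \pmod{12}$ one has $p\equiv 2\pmod 3$ and $p\equiv 1\pmod 4$, so $j=0$ is supersingular (contribution $\tfrac16$) and $j=1728$ is not; for $p\equiv 7\pmod{12}$ it is the reverse (contribution $\tfrac14$). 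Since both cases end up with $\varepsilon_0+\varepsilon_{1728}=1$ and the same final count $\lfloor p/12\rfloor+1$, the swap is harmless for the conclusion, but the intermediate numbers should be corrected.
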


\begin{proof}
  See \cite[Thm.~V.4.1]{silverman2009book}.
\end{proof}

\subsection{Proof of the main theorem}
With these two last statements at hand we can finally provide the
proof of Theorem~\ref{thm:main}. We restrict the proof to the case
$p \geq 5$. Note that Tsfasman--Vl\u{a}du\c{t}--Zink theorem remains
true when $p = 2,3$ but the coding theoretic interest is rather
limited.

\begin{proof}[Proof of Theorem~\ref{thm:main}]
  Consider the sequence of curves $X_0(\ell)$ for
  $\ell \equiv 11 \mod 12$.  From Theorem~\ref{thm:genus_X0ell} it has
  genus $g_\ell = \frac{\ell + 1}{12}$.  From
  Theorem~\ref{thm:supsersing_classes}, the curve $X_0(1)$ has at
  least $\frac{p-1}{12}$ $\F_{p^2}$--rational points corresponding to
  isomorphism classes of supersingular elliptic curves.  Such an
  elliptic curve with no nontrivial automorphism has $\ell + 1$
  pre-images in $X_0(\ell)$ which also correspond to supersingular
  elliptic curves and hence are $\F_{p^2}$--rational points.
  Depending on the class of $p$ modulo $12$ the curves with
  $j$--invariant $0$ and $1728$ may be supersingular. More precisely,
  from \cite[Ex.~V.4.4 \& V.4.5]{silverman2009book},
  \begin{itemize}
  \item[\textbullet] for $p \equiv 1 \mod 12$, both curves are
    ordinary (\ie{} non supersingular) and then any supersingular
    curve has no nontrivial automorphism and hence has $\ell + 1$
    pre-images in $X_0(\ell)(\F_{p^2})$. Therefore, from
    Theorem~\ref{thm:supsersing_classes},
    \[
      \card{X_0(\ell)(\F_{p^2})} \geq (\ell + 1) \frac{p-1}{12}\cdot
    \]
  \item[\textbullet] for $p \equiv 5 \mod 12$, the curve with $j=0$ is
    supersingular and the one with $j = 1728$ is ordinary. Therefore,
    there are $\frac{p-5}{12}+1$ supersingular curves and all of them
    but one have $\ell + 1$ distinct pre-images.  The remaining curve
    is the one with $j = 0$ and an automorphism group of order $6$.
    Its treatment is very similar to the proof of
    Theorem~\ref{thm:genus_X0ell}. Consider the action of the
    automorphism of order $6$ on the $\ell$--torsion.  From
    Lemma~\ref{lem:faithful}, this induces an automorphism $\eta$ or
    order $6$ of $\E [\ell] \simeq \F_\ell^2$.  Since
    $\ell \equiv 11 \mod 12$, then $\ell \equiv 2 \mod 3$ and hence
    $\F_\ell$ does not contains the sixth roots of unity. Thus, $\eta$
    is not diagonalisable and hence cannot fix a line. Consequently,
    one proves that the orbit of any line is the union of $3$ distinct
    lines ($\eta^3 = -\text{Id}$, which fixes the lines).  Therefore,
    the curve with $j$--invariant $0$ has $\frac{\ell + 1}{3}$
    pre-images and Consequently, using
    Theorem~\ref{thm:supsersing_classes}:
    \[
      \card{X_0(\ell)(\F_{p^2})} \geq (\ell + 1)\frac{p-5}{12} +
      \frac{\ell + 1}{3} = (\ell + 1)\frac{p-1}{12}\cdot
    \]
  \item[\textbullet] for $p \equiv 7 \mod 12$, the curve with $j=0$ is
    ordinary and the one with $j = 1728$ is supersingular. A similar reasoning
    yields
    \[
      \card{X_0(\ell)(\F_{p^2})} \geq (\ell + 1)\frac{p-7}{12} +
      \frac{\ell + 1}{2} = (\ell + 1)\frac{p-1}{12}\cdot
    \]
  \item[\textbullet] for $p \equiv 11 \mod 12$, both curves are
    supersingular and we get:
    \[
      \card{X_0(\ell)(\F_{p^2})} \geq (\ell + 1)\frac{p-11}{12} +
      \frac{\ell + 1}{2} + \frac{\ell + 1}{3} = (\ell + 1)\frac{p-1}{12}\cdot
    \]
  \end{itemize}
  In summary, we always have a lower bound $(\ell + 1)\frac{p-1}{12}$
  on the number of rational points. Then
  \[
    \frac{\card{X_0(\ell)(\F_{p^2})}}{g_\ell} \geq
    \frac{\frac{p-1}{12}(\ell + 1)}{\left( \frac{\ell + 1}{12}\right)}
    = p-1.
  \]
  Thus, over $\Fq$ for $q = p^2$, we identified a family of curves
  whose number of $\Fq$--rational points goes to infinity and whose ratio,
  number of $\Fq$--points divided by the genus goes to $\sqrt{q}-1$.
  Which turns out to be optimal.
\end{proof}

\bibliographystyle{alpha}

\end{document}